\documentclass[11pt,a4paper]{article}
\usepackage{amsmath}
\usepackage{amssymb}
\usepackage{amsthm}
\usepackage{mathrsfs}
\usepackage{graphicx}
\usepackage{subfigure}
\usepackage{float}
\usepackage{setspace}
\usepackage{extarrows}
\usepackage{hyperref}
\usepackage{enumitem}
\usepackage{bm}
\usepackage{dsfont}
\usepackage{color}
\usepackage{xcolor}
\usepackage[a4paper,centering]{geometry}
\hypersetup{colorlinks=true,linkcolor=red,anchorcolor=blue,citecolor=blue}
\numberwithin{equation}{section}
\newtheorem{defi}{\textbf{Definition}}[section]
\newtheorem{lemma}[defi]{\textbf{Lemma}}

\newtheorem{thm}[defi]{\textbf{Theorem}}
\newtheorem{prop}[defi]{\textbf{Proposition}}
\newtheorem{rmk}[defi]{\textbf{Remark}}

\newcommand{\diam}{\mathrm{diam}}

\title{Tail exponents of the three-dimensional uniform spanning tree and Abelian sandpile}
\author{Xinyi Li\thanks{Beijing International Center for Mathematical Research, Peking University.\\ \href{mailto:xinyili@bicmr.pku.edu.cn}{xinyili@bicmr.pku.edu.cn}} \and Runsheng Liu\thanks{School of Mathematical Sciences, Peking University. \href{mailto:liurunsheng@pku.edu.cn}{liurunsheng@pku.edu.cn}} \and Daisuke Shiraishi\thanks{Graduate School of Informatics, Kyoto University. \href{mailto:shiraishi@acs.i.kyoto-u.ac.jp}{shiraishi@acs.i.kyoto-u.ac.jp}}}
\date{\today}

\begin{document}

\maketitle
\begin{abstract}
We study the local geometry of the three-dimensional uniform spanning tree and its connection with the Abelian sandpile model. We obtain sharp tail exponents, up to subpolynomial errors, for the past of the origin in the three-dimensional UST and for the $0$-tree of the $0$-wired uniform spanning forest. As a principal application, we prove the corresponding three-dimensional Abelian sandpile avalanche exponents: the avalanche-cluster radius has tail exponent $1$, while both the avalanche-cluster size and the total number of topplings have tail exponent $1/3$. These results identify the leading power-law behaviour of three-dimensional sandpile avalanches and improve previously known bounds.

\end{abstract}

\section{Introduction}
The \emph{uniform spanning tree} (UST) on a finite graph $G$ is the uniformly chosen random spanning tree of $G$. The study of the UST on $\mathbb{Z}^d$, which is defined as the weak limit of the UST on finite subgraphs, has been a central theme in modern probability theory, owing to its deep connections with random walks, loop-erased random walks (LERW), Abelian sandpile model, potential theory, and many other models in statistical mechanics.

The pioneering work of Pemantle \cite{Pem91} established the existence of the UST measure on $\mathbb{Z}^d$ for all $d\geq 2$, and proved that the limit is almost surely a single tree for $d\leq 4$ and a single-ended forest for $d\geq 5$. Wilson's algorithm \cite{Wil96}, which generates the UST by successively running loop-erased random walks, provides an explicit and powerful coupling between the UST and LERW. This coupling has been the basis for a large part of quantitative results on the geometry of these objects.

In two dimensions, the scaling limit of the UST is intimately related to the Schramm--Loewner evolution. Schramm \cite{Sch00} constructed the full scaling limit of 2D UST, and conjectured that the limit is conformally-invariant. Lawler, Schramm, and Werner \cite{LSW04} confirmed this conjecture by showing that the scaling limit of the Peano curve associated with the UST is $\mathrm{SLE}_8$. 
In three dimensions, building upon the groundbreaking work of Kozma \cite{Koz07}, which established the scaling limit of three-dimensional LERW,  Angel, Croydon, Hern\'andez-Torres, and Shiraishi \cite{ACHS21},  established the the scaling limit of the 3D UST and studied the associated random walk, establishing detailed metric and spectral properties of the limiting space. 

One of the main purposes of this work is to investigate the local geometry of 3D UST. In particular, we focus on the past of the origin and the $0$-tree of
the $0$-wired uniform spanning forest ($0$-WUSF), and obtain sharp tail exponents for their extrinsic diameter, intrinsic diameter, and volume. These estimates quantify the local non-mean-field geometry of the three-dimensional UST.

The second model we consider in this work is the \emph{Abelian sandpile model} (ASM), one of the canonical examples of self-organized criticality. On a finite set $K\subseteq\mathbb{Z}^{d}$, a sandpile configuration assigns a non-negative integer height to each vertex. A vertex is unstable if its height is at least $2d$, in which case it topples by sending one grain of sand to each neighbour; grains sent outside $K$ are lost. Dhar's Abelian property \cite{Dha90} states that stabilization does not depend on the order of topplings. Repeatedly adding a grain and stabilizing defines a Markov chain whose stationary states are the recurrent configurations. If one starts from stationarity and adds a grain at a fixed vertex, the ensuing topplings form an \emph{avalanche}; its radius, cluster, and total number of topplings are among the basic observables of the model. The burning bijection of Majumdar and Dhar \cite{MD92}, together with its infinite-volume extensions \cite{JR08,BHJ17}, identifies recurrent sandpiles with spanning trees or forests and makes UST/USF geometry a natural input for avalanche estimates. 

The Abelian sandpile model has been extensively studied through this tree representation. In high dimensions, Hutchcroft \cite{Hut20} proved mean-field avalanche exponents for $d\geq5$, while in the critical dimension $d=4$ Hutchcroft and Sousi \cite{HS23} identified non-trivial logarithmic corrections in the corresponding spanning-forest geometry. In low dimensions the situation is substantially more delicate. In particular, for $d=2,3$, Bhupatiraju, Hanson, and J\'arai \cite{BHJ17} obtained important bounds on avalanche exponents, but the exact leading powers were not known in dimension three.

The second main result of the present paper is to identify these leading powers in three dimensions. We prove that the avalanche-cluster radius has tail exponent $1$, while the avalanche-cluster size and the total number of topplings have tail exponent $1/3$, up to subpolynomial losses. Thus dimension three exhibits non-mean-field geometry, but the sandpile exponents are nevertheless determined by a precise characterization of the first wave via
the burning bijection (see Section \ref{subsec:asm} for details) by the $0$-tree of the $0$-WUSF. Its extrinsic and volume tails are the key inputs for the three-dimensional sandpile exponents.

\medskip

We now state our main results in a more precise fashion. Let $\mathcal{U}$ be the UST on $\mathbb{Z}^{3}$, and let $\mathfrak{P}$ denote the past of the origin. Let $\alpha\in[1/3,1)$ be the intersection exponent of 3D LERW; see \eqref{eq:alphadef}. We refer readers to Section~\ref{sec:notation} for the notation used below.

\begin{thm}[Extrinsic exponent]\label{thm:extrinsic}
There exist $c_{1}>0$, and for any $\varepsilon>0$,  $c_{2}=c_{2}(\varepsilon)>0$, such that for any $R\geq 1$,
\[
c_{1}R^{-\gamma}\leq\mathbb{P}\bigl(\diam_{\mathrm{ext}}(\mathfrak{P})\geq R\bigr)\leq c_{2}R^{-\gamma+\varepsilon},
\]
where $\gamma:=1+\alpha$.
\end{thm}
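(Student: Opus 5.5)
We bound $\mathbb{P}(\diam_{\mathrm{ext}}(\mathfrak{P})\geq R)$ by reducing it to an intersection event for loop-erased random walk, via Wilson's algorithm and the intersection exponent $\alpha$ from \eqref{eq:alphadef}. The heuristic is as follows. The past $\mathfrak{P}$ reaches distance $R$ if and only if some vertex $x$ with $|x|\asymp R$ has the origin on its path $\gamma_x$ to infinity in $\mathcal{U}$. Generate the tree by first running an LERW $\gamma_0$ from the origin to infinity, then an LERW from $x$. The origin lies on $\gamma_x$ exactly when the walk from $x$ hits $\gamma_0$ at the origin itself, rather than somewhere else on $\gamma_0$. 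A single vertex at distance $R$ has this property with probability about $R^{-1-\alpha}\cdot R^{-?}$. We therefore count over a box, and the cleanest route is a first-moment / second-moment argument at scale $R$.

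\textbf{Lower bound.} For the event $\{\diam_{\mathrm{ext}}(\mathfrak{P})\geq R\}$ it suffices that the branch through $0$ extends to distance $R$ on the side away from the future. Concretely, consider the following event for two independent walks.
\begin{itemize}
\item An LERW $\gamma_0$ from $0$ to infinity (via a walk $S^1$) and a random walk $S^2$ started near $\partial B(R)$ hit each other only near the origin inside $B(R)$.
\item The walk $S^2$ first hits the path $\gamma_0$ at $0$.
\end{itemize}
Using up-to-constants estimates for the non-intersection probability $\mathbb{P}(\mathrm{LE}(S^1[0,\tau_R])\cap S^2[0,\tau_R]=\emptyset)\asymp R^{-\alpha}$ (with separation lemmas), the probability that a random walk from distance $R$ approaches the origin and then hits the loop-erasure exactly at $0$ is of order $R^{-1}\cdot R^{-\alpha}$. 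Here $R^{-1}$ is the Green's function cost of reaching the origin, and the $R^{-\alpha}$ comes from avoiding $\gamma_0$ across all scales. Separation lemmas and the quasi-independence of LERW across scales give the matching lower bound $c_1R^{-1-\alpha}$. One may also run the walk $S^2$ from a point chosen among $O(R^3)$ vertices. This does not improve the bound, since the event for distinct $x$ is strongly correlated and $\{0\in\gamma_x\}$ essentially forces one branch. Thus taking a single suitably placed starting point already gives $c_1R^{-\gamma}$.

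\textbf{Upper bound.} The event implies that there is a path in $\mathcal{U}$ from $\partial B(R)$ to $0$ that avoids the future $\gamma_0$ except at $0$. Run Wilson's algorithm with root at infinity: first $\gamma_0$, then a walk from a point $x\in\partial B(R)$ whose branch realises the past. The difficulty is that the relevant $x$ is random, so a union bound over $|\partial B(R)|\asymp R^2$ points is far too lossy. Instead I would use a multiscale decomposition. For dyadic scales $2^k\leq R$, the past reaching distance $R$ forces, at each scale, a crossing of the annulus $A(2^k,2^{k+1})$ by a tree branch that is disjoint from $\gamma_0$ there. Rerooting Wilson's algorithm at the first point of $\partial B(2^{k})$, one shows that the probability of such a crossing, given $\gamma_0$, is bounded by the probability that a random walk from distance $2^{k+1}$ reaches $B(2^k)$ without hitting $\gamma_0$. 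Chaining these gives the non-intersection decay $R^{-\alpha+\varepsilon}$, using the quasi-multiplicativity of the LERW escape probability. The additional factor $R^{-1}$ comes from the final requirement that the branch actually attaches at $0$: conditioned on arriving near $0$ avoiding $\gamma_0$, hitting $\gamma_0$ exactly at $0$ costs a bounded factor. The extra $R^{-1}$ comes from the volume-of-tree-versus-hitting tradeoff at the top scale, namely the walk from $\partial B(R)$ has probability $R^{-1}$ of reaching $B(R/2)$ at all, per unit of harmonic measure.

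\textbf{Main obstacle.} Making the upper bound rigorous with only a subpolynomial loss is the hard part. This means controlling the random starting point and the dependence between the branches across scales. I would first try to bound $\mathbb{E}[\#\{x\in A(R,2R): 0\in\gamma_x\}]\lesssim R^{3}\cdot R^{-2-\alpha+\varepsilon}$, and then prove that, given the past reaches $R$, it contains $\gtrsim R^{2-\varepsilon}$ such points with good probability. The latter follows since the past branch in 3D has dimension at least that of a random-walk-avoiding set, roughly via the conditional volume of points hanging off it. Dividing the two estimates yields $R^{-\gamma+\varepsilon}$, which is where the $\varepsilon$-loss enters.
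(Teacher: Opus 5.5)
Your lower bound is the paper's argument (Lemma~\ref{lem:one-point}). Run Wilson's algorithm from $0$ and then from a fixed $x\in\partial B_R$. Then $\{x\in\mathfrak{P}\}$ is the event that the walk from $x$ hits $0$ (cost $\asymp R^{-1}$) before touching $\gamma_0\setminus\{0\}$ (conditional cost $\asymp R^{-\alpha}$, Lemma~\ref{lem:lerw-utc-rev}). That part is fine.

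The upper bound has a genuine gap. In your dyadic chaining, the per-scale constraint ``some tree branch crosses $B_{2^{k+1}}\setminus B_{2^k}$ avoiding $\gamma_0$'' is nearly vacuous, since the UST has many such branches. What matters is that one of them lies in the past, and which one is random. Bounding this by the avoidance probability of a single walk is exactly the union bound you wanted to avoid. There is also no Markov structure that lets you multiply bounds across scales for possibly different branches. The accounting of $R^{-1}$ is off too: in $\mathbb{Z}^3$ a walk from $\partial B_R$ hits $B_{R/2}$ with probability bounded below by a constant. The factor $R^{-1}$ is the cost of hitting the single vertex $0$, accumulated over all scales.

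Your fallback first-moment plan uses $\mathbb{P}(x\in\mathfrak{P})\approx\|x\|^{-2-\alpha}$, which contradicts your own lower-bound computation. With the correct $\|x\|^{-1-\alpha}$, one has $\mathbb{E}\,|\mathfrak{P}\cap(B_{R/2}\setminus B_{R/4})|\asymp R^{2-\alpha}$. You would then need $|\mathfrak{P}\cap(B_{R/2}\setminus B_{R/4})|\ge R^{3-\varepsilon}$ with conditional probability at least $R^{-\varepsilon}$ on $\{\diam_{\mathrm{ext}}(\mathfrak{P})\ge R\}$. That is a full-volume estimate conditioned on a union event, and it is not a technicality. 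Locating a witnessing branch via Wilson's algorithm also reveals many blue branches that can absorb walks from the annulus. The paper only controls such conditional moments given a \emph{fixed} $x\in\mathfrak{P}$ (Lemma~\ref{lem:conditional-moment}), and that does not transfer to the union event.

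The missing idea is the paper's device for making the random location harmless. After $\gamma_0$, Wilson's algorithm is run from nested meshes $\Lambda_k$ of spacing $r_k=2^{-3k}R^{1-2\varepsilon}$. Each branch is coloured red or blue according to whether it ends at $0$.
\begin{itemize}
\item The first layer has only $|\Lambda_1|\asymp R^{6\varepsilon}$ points. So the one-point estimate plus a union bound give probability $c(\varepsilon)R^{-1-\alpha+O(\varepsilon)}$ that the past is already detected at distance $\asymp R$ by layer one.
\item Suppose instead the past is first detected near $\partial B_{(1-2^{-k})R}$ only at layer $k\ge2$. Then some walk must cross at least $2^kR^{\varepsilon}$ mesoscopic annuli (radii $r_{k-1}$ to $s_{k-1}$) around points of $\Lambda_{k-1}$ without hitting the blue branches already built.
\item By the hittability estimate (Lemma~\ref{lem:hittability}) and a union bound over the $2^{9k}R^{6\varepsilon}$ earlier branches, all of them are hittable outside an event of probability $2^{9k}R^{6\varepsilon}(2^kR^{\varepsilon})^{-M}$.
\item On the good event, the avoidance has probability at most $(2^kR^{\varepsilon})^{-a2^kR^{\varepsilon}}$. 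Hence these late detections contribute at most $R^{-99}$ in total (Lemma~\ref{lem:first-layer-dominate}).
\end{itemize}
This reduction to a union bound over $R^{O(\varepsilon)}$ points is exactly where the $\varepsilon$-loss enters. Without this coarse-mesh-plus-hittability step, or a genuine substitute, your outline does not give $c_2R^{-\gamma+\varepsilon}$.
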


\begin{thm}[Intrinsic exponent]\label{thm:intrinsic}
For any $\varepsilon>0$, there exist $c_{1}=c_{1}(\varepsilon),c_{2}=c_{2}(\varepsilon)>0$, such that for any $R\geq 1$,
\[
c_{1}R^{-\eta-\varepsilon}\leq\mathbb{P}\bigl(\diam_{\mathrm{int}}(\mathfrak{P})\geq R\bigr)\leq c_{2}R^{-\eta+\varepsilon},
\]
where $\eta:=(1+\alpha)/(2-\alpha)$.
\end{thm}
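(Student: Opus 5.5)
Both bounds will come from Theorem~\ref{thm:extrinsic} together with the standard relation between intrinsic and extrinsic distance in the 3D UST. Let $\beta\in(1,5/3]$ be the growth exponent of 3D LERW, so that a LERW of extrinsic diameter $r$ has length $r^{\beta+o(1)}$ with high probability. The scaling relation $\beta=2-\alpha$ is used throughout, with the same subpolynomial errors; it is either proved earlier or must be derived in the notation section. Then $\eta=\gamma/\beta$, and the heuristic is immediate. For the past to have intrinsic diameter $R$ it should reach extrinsic distance $r=R^{1/\beta}$, which costs $r^{-\gamma}=R^{-\gamma/\beta}$.

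\textbf{Lower bound.} On the event $\{\diam_{\mathrm{ext}}(\mathfrak{P})\geq r\}$ with $r=R^{1/\beta+\delta}$, I will show that with conditional probability at least $1/2$ the past contains a branch of length at least $R$. Such a branch connects the origin to $\partial B(r)$. Using Wilson's algorithm rooted at infinity, this branch is the loop erasure of a random walk started at a far point $x$, conditioned on its loop erasure passing through $0$. The extrinsic lower bound in Theorem~\ref{thm:extrinsic} comes from exactly this construction: a walk from $\partial B(r)$ that hits near $0$ and whose erasure avoids a nonintersection event. So I will rerun that construction, adding the requirement that the segment of the LERW between $\partial B(r)$ and $0$ has length at least $r^{\beta-\delta}$. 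The lower-tail length estimates for LERW give this with probability $1-O(r^{-c})$, uniformly under the nonintersection conditioning, by separation lemmas and quasi-independence. Losing $\varepsilon$ in the exponent gives $c_1R^{-\eta-\varepsilon}$.

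\textbf{Upper bound.} I will split according to the extrinsic diameter:
\[
\mathbb{P}(\diam_{\mathrm{int}}\geq R)\leq \mathbb{P}(\diam_{\mathrm{ext}}\geq r)+\mathbb{P}(\diam_{\mathrm{int}}\geq R,\ \diam_{\mathrm{ext}}<r),\qquad r=R^{1/\beta-\delta}.
\]
The first term is at most $c R^{-\eta+\varepsilon}$ by Theorem~\ref{thm:extrinsic}. For the second term I will use the upper-tail length estimate for tree paths, which says that every UST path inside $B(r)$ whose endpoints are within distance $s$ has length at most $s^{\beta+\delta'}$, except with probability $O(r^{-M})$ for any $M$. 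This kind of estimate is in \cite{ACHS21}, and it holds uniformly over a box via a union bound over dyadic scales. Because $\mathfrak{P}\subset B(r)$, every point of $\mathfrak{P}$ connects to $0$ by a tree path inside the region, of length at most $r^{\beta+\delta'}<R$. So the second term is superpolynomially small.

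\textbf{Main obstacle.} The main technical difficulty is making the upper-tail length bound hold for the past uniformly, i.e.\ for all paths descending to $0$ at once. I would first take the high-probability event from \cite{ACHS21} that all tree paths between points of $B(r)$ have length at most $r^{\beta+\delta}$, and check that it applies to the whole ball. In the lower bound, the delicate step is the conditioning: the branch realizing the extrinsic diameter is a LERW conditioned on a rare event of probability about $r^{-\gamma}$. I therefore need its length estimate to remain valid under this conditioning. I plan to get this by coupling with an unconditioned LERW on an annulus after separation, and applying Kozma-type length concentration there.
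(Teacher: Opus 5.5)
Your upper bound is the paper's argument. You split on $\{\diam_{\mathrm{ext}}(\mathfrak{P})\geq r\}$ with $r=R^{1/\beta-\delta}$ and bound that term by Theorem~\ref{thm:extrinsic}. On the complement, $\mathfrak{P}\subseteq U_{r}$, because the tree path from any $x\in\mathfrak{P}$ to $0$ stays in $\mathfrak{P}$, and Lemma~\ref{lem:ext-int} with $\lambda\asymp R^{\beta\delta}$ finishes. State the input in exactly that form. The claim that all tree paths between points of $B(r)$ are short is false, since such paths can leave $B(r)$. Your all-scales version (endpoints at distance $s$ force length at most $s^{\beta+\delta'}$) also fails for small $s$. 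Neither is needed.

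The lower bound has a genuine gap. Its key step is only asserted: that, conditionally on the non-intersection event of probability $\asymp r^{-\gamma}$, the branch from $\partial B(r)$ to $0$ has length at least $r^{\beta-\delta}$. Your plan for it (separation, quasi-independence, coupling with an unconditioned LERW in an annulus) is not carried out. Worse, if the unconditional lower-tail estimate you start from has only a polynomial error $O(r^{-c})$, it says nothing after conditioning on an event of probability $r^{-\gamma}$ unless $c>\gamma$. So this step carries the entire difficulty.

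The missing idea is that the lower tail of intrinsic distances is stretched-exponential for the whole tree, not just one branch; this is Lemma~\ref{lem:int-ext}. With it, no conditioning is needed. Put $r:=3R^{(1+\delta)/\beta}$. On $\{\diam_{\mathrm{ext}}(\mathfrak{P})\geq r\}\cap\{\diam_{\mathrm{int}}(\mathfrak{P})<R\}$ there is $x\in\mathfrak{P}$ with $\|x\|\geq r/2$ and $d_{\mathcal{U}}(0,x)<R$, so $B_{\mathrm{int}}(0,R)\nsubseteq B_{r/3}$. Lemma~\ref{lem:int-ext} with $\lambda=R^{\delta}$ and Theorem~\ref{thm:extrinsic} then give
\[
\mathbb{P}\bigl(\diam_{\mathrm{int}}(\mathfrak{P})\geq R\bigr)\geq\mathbb{P}\bigl(\diam_{\mathrm{ext}}(\mathfrak{P})\geq r\bigr)-\mathbb{P}\bigl(B_{\mathrm{int}}(0,R)\nsubseteq B_{r/3}\bigr)\geq c\,R^{-\eta(1+\delta)}-c_{1}e^{-c_{2}R^{a\delta}},
\]
which is exactly the paper's proof. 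By Bayes' rule the same stretched-exponential bound gives your conditional statement at once, since $e^{-c_{2}R^{a\delta}}\ll r^{-\gamma}$. The conditional-branch analysis you sketch is the kind of input needed to remove the $\varepsilon$ from the lower bound (cf.\ Remark~\ref{rmk:intrinsic}). It is not needed for the theorem as stated.
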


\begin{thm}[Volume exponent]\label{thm:volume}
For any $\varepsilon>0$, there exist $c_{1}=c_{1}(\varepsilon),c_{2}=c_{2}(\varepsilon)>0$ such that for every $n\geq 1$,
\[
c_{1}n^{-\gamma/3-\varepsilon}\leq \mathbb{P}\bigl(|\mathfrak{P}|\geq n\bigr)\leq c_{2}n^{-\gamma/3+\varepsilon}.
\]
\end{thm}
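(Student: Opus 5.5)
The volume exponent $\gamma/3$ is the extrinsic exponent $\gamma$ of Theorem~\ref{thm:extrinsic} rescaled by the typical volume of the past inside a box. Heuristically, on $\{\diam_{\mathrm{ext}}(\mathfrak{P})\ge R\}$ the past fills a positive fraction of $B(R)$, so $|\mathfrak{P}|\asymp R^{3}$. Hence $\mathbb{P}(|\mathfrak{P}|\ge n)\approx \mathbb{P}(\diam_{\mathrm{ext}}(\mathfrak{P})\ge n^{1/3})\approx n^{-\gamma/3}$. I would prove the two bounds separately, taking $R=n^{1/3}$ up to $n^{\pm\varepsilon}$ factors.

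\emph{Upper bound.} Set $R=n^{1/3-\delta}$ and split
\[
\mathbb{P}(|\mathfrak{P}|\ge n)\le \mathbb{P}\bigl(\diam_{\mathrm{ext}}(\mathfrak{P})\ge R\bigr)+\mathbb{P}\bigl(|\mathfrak{P}|\ge n,\ \mathfrak{P}\subseteq B(R)\bigr).
\]
The first term is at most $c\,n^{-\gamma/3+\varepsilon}$ by Theorem~\ref{thm:extrinsic}. The second term vanishes once $|B(R)|<n$, i.e.\ $cR^{3}<n$, which holds for our choice of $R$ when $n$ is large. Thus the upper bound is almost free. If one prefers a first-moment argument instead, use $\mathbb{E}\bigl[|\mathfrak{P}\cap B(R)|\bigr]=\sum_{x\in B(R)}\mathbb{P}(0\in \text{past of }x)$. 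Bounding this requires the one-point estimate $\mathbb{P}(x\in\mathfrak{P})\lesssim |x|^{-\gamma+o(1)}\cdot|x|^{0}$, with the convention of counting $x$ whose LERW path to infinity passes through $0$; by the union bound this is controlled by Theorem~\ref{thm:extrinsic} applied at scale $|x|$. Either route gives the upper bound.

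\emph{Lower bound.} This is the main obstacle: I need $|\mathfrak{P}|\gtrsim R^{3-\varepsilon}$ with probability $\gtrsim R^{-\gamma-\varepsilon}$. I would construct the event through Wilson's algorithm rooted at infinity, in three stages.
\begin{enumerate}[label=(\roman*)]
\item Run the branch from $0$ first. Request the event $A_R$, underlying the lower bound in Theorem~\ref{thm:extrinsic}, that $0$'s past reaches distance $R$. Concretely, a random walk from a point $x_0$ with $|x_0|\approx R$ produces a LERW that passes through $0$, and the path from $0$ to infinity does not obstruct this. This costs $R^{-\gamma}$.
\item Conditionally on this good configuration, require with probability bounded below (or $\ge R^{-\varepsilon}$) that the LERW arm $\eta$ from $x_0$ to $0$ is ``nice''. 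That is, $\eta$ is reasonably separated from the infinite ray of $0$, and for a positive fraction of points $y\in B(R)$ near $\eta$, a random walk from $y$ hits $\eta$ before the ray of $0$ and before exiting $B(2R)$.
\item Show that $\mathbb{E}\bigl[|\mathfrak{P}|\,\big|\,\text{good}\bigr]\gtrsim R^{3-\varepsilon}$. Any such $y$ then joins the past of $0$, because its branch merges into $\eta\setminus\{0\}$ upstream of $0$.
\end{enumerate}

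To pass from a conditional mean to a probability, I would use a conditional second-moment (Paley--Zygmund) bound $\mathbb{E}\bigl[|\mathfrak{P}\cap B(R)|^{2}\,\big|\,\text{good}\bigr]\lesssim R^{6+\varepsilon}$, which is trivial since $|B(R)|^{2}\asymp R^{6}$. The key input is therefore a hitting estimate: a random walk from $y$ at distance $r\le R$ from a 3D LERW of length scale $R$ hits it with probability bounded below, uniformly over good configurations. This follows from the fact that 3D LERW has dimension $\beta>1$, so it is hittable, via the standard quasi-independence and separation lemmas for LERW. Separation of $\eta$ from the ray of $0$ is where the intersection exponent $\alpha$ enters, and I expect the separation lemma at the pinch point $0$ to be the delicate step.

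Combining, $\mathbb{P}\bigl(|\mathfrak{P}|\ge cR^{3-\varepsilon}\bigr)\ge cR^{-\gamma-\varepsilon}$. Setting $n=cR^{3-\varepsilon}$ yields $\mathbb{P}(|\mathfrak{P}|\ge n)\ge c\,n^{-\gamma/3-\varepsilon'}$ as claimed.
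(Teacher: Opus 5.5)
Your proof is correct. The lower bound follows the paper, while the upper bound takes a genuinely simpler route.

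\textbf{Upper bound.} You use only the deterministic inclusion $\mathfrak{P}\subseteq B_{D}$ with $D:=\diam_{\mathrm{ext}}(\mathfrak{P})$, which holds because $0\in\mathfrak{P}$. So $|\mathfrak{P}|\ge n$ forces $\diam_{\mathrm{ext}}(\mathfrak{P})\gtrsim n^{1/3}$, and Theorem~\ref{thm:extrinsic} finishes. The paper instead splits $\{|\mathfrak{P}|\ge R^{3/\beta}\}$ according to whether $\diam_{\mathrm{int}}(\mathfrak{P})\ge R^{1-\varepsilon}$. It then invokes Theorem~\ref{thm:intrinsic}, whose upper bound itself rests on Lemma~\ref{lem:ext-int}, together with the intrinsic-ball volume bound of Lemma~\ref{lem:int-vol}. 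The exponents agree because the intrinsic exponent is $\gamma/\beta$ and $n=R^{3/\beta}$. Your route needs none of these metric-comparison inputs. It also loses nothing, since the lower bound shows the past essentially fills a box of its own radius. Your optional first-moment sketch is unnecessary, and ``$0\in$ past of $x$'' there has the direction reversed.

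\textbf{Lower bound.} Your skeleton is exactly the paper's: one-point cost $R^{-1-\alpha}$, a conditional first moment of $|\mathfrak{P}\cap B_R|$ of order $R^{3-o(1)}$, the trivial second moment $R^{6}$, and Paley--Zygmund. The only structural difference is in how the first moment is obtained. You build one good event for $(\gamma,\eta)$ that serves many $y$ at once. The paper conditions on $\{x\in\mathfrak{P}\}$ alone and proves $\mathbb{P}(y\in\mathfrak{P}\mid x\in\mathfrak{P})\gtrsim(\log R)^{-1}$ separately for each $y\in B_{R/4}$, via Lemma~\ref{lem:two-point-decompose} with separation taken at the scale $\|y\|$.

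Two points in your step (ii) need correcting:
\begin{enumerate}
\item The separation you need is at the far ends of the two arms (on $\partial B_r$), not at the pinch point $0$, where they necessarily touch.
\item Because $\eta$ is the loop erasure of a conditioned walk rather than an ILERW, Lemma~\ref{lem:hittability} does not apply to it directly. The paper sidesteps this with the deterministic capacity bound of Lemma~\ref{lem:hittinglb} for an arbitrary annulus crossing. That bound costs only $(\log R)^{-1}$ instead of a constant, well within the $R^{-\varepsilon}$ slack you allow.
\end{enumerate}
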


In fact, with some additional work, the $-\varepsilon$ in both lower bounds of Theorems~\ref{thm:intrinsic} and \ref{thm:volume} can be removed; see Remarks~\ref{rmk:intrinsic} and \ref{rmk:volume} for relevant discussions.

\begin{rmk}\label{rmk:2d-exponents}
The same strategy also yields the corresponding exponents in two dimensions, which are recorded in Table~\ref{tab:past-exponents}. See Remark~\ref{rmk:discussion2D} for further discussion.
\end{rmk}

\begin{table}[H]
\centering
\small
\renewcommand{\arraystretch}{1.15}
\begin{tabular}{|c|c|c|c|}
\hline
Dimension  & Extrinsic & Intrinsic & Volume \\
\hline
$d=2$  & $\frac34$ & $\frac35$ & $\frac38$ \\
$d=3$  & $1+\alpha\, (\approx 1.376) $ & $\frac{1+\alpha}{2-\alpha}\,(\approx 0.847)$ & $\frac{1+\alpha}{3}\,(\approx 0.459)$ \\
$d=4$  & $2^\dagger$ & $1^\dagger$ & $\frac12^\dagger$ \\
$d\geq 5$  & $2$ & $1$ & $\frac12$ \\
\hline
\end{tabular}
\caption{Leading power exponents for the tail of the past. The approximate {\it numerical} values in dimension three come from Wilson's simulations \cite{Wil10}, which suggest $\alpha\approx0.376$. Inserting the rigorous bound $\alpha \in[1/3,1)$ gives $[4/3,2)$, $[4/5,2)$, $[4/9,2/3)$ respectively.  For $d=4$ the leading powers are accompanied by non-trivial logarithmic corrections (and hence the $\dagger$); see Hutchcroft--Sousi \cite{HS23}. The mean-field exponents for $d\geq 5$ follow from Hutchcroft \cite{Hut20}.}
\label{tab:past-exponents}
\end{table}

The proof of Theorems~\ref{thm:extrinsic}--\ref{thm:volume} has two main ingredients. The first is a one-point estimate for the past, obtained from Wilson's algorithm by expressing the event $\{x\in\mathfrak{P}\}$ in terms of a non-intersection event between an infinite LERW and an independent simple random walk. The second is a multiscale Wilson-algorithm construction on a spatial mesh, which turns the one-point estimate into a sharp upper bound for the extrinsic diameter tail.

The intrinsic and volume tails are then obtained by combining the extrinsic estimate with known metric-comparison results for the three-dimensional UST. More precisely, once the extrinsic scale is identified, the intrinsic scale follows from stretched-exponential control comparing Euclidean and tree distances, while the volume tail is proved by combining these metric comparisons with first- and second-moment estimates for the past inside a box.

Exponents for the $0$-tree can be obtained in a similar fashion. In this case, the one-point estimate is no-longer supplied by that of a non-intersection event, but rather a hitting event of a simple random walk. As a result, the corresponding extrinsic, intrinsic, and volume exponents become $1$, $\frac{1}{2-\alpha}$ and $\frac{1}{3}$, respectively. We refer readers to Propositions~\ref{prop:extrinsic-WUSF}-\ref{prop:volume-WUSF} for precise statements.

\medskip

We now turn to the Abelian sandpile model. Let $H$ be a sample from the infinite-volume recurrent sandpile measure, whose law is denoted by $\mathbf{P}$. Let $\mathrm{Av}:=\mathrm{Av}(0;H)$ be the {\it total number of topplings} (TNT) created by adding one grain at the origin, and let $\mathrm{AvC}:=\mathrm{AvC}(0;H)$ be the corresponding avalanche {\it cluster};  see Section~\ref{subsec:asm} for precise definitions. 

We state the main Abelian sandpile consequences. These results identify the leading power-law exponents for three-dimensional avalanches. They improve the previously known bounds of Bhupatiraju--Hanson--J\'arai \cite{BHJ17}, and show that in dimension three the avalanche-cluster radius has exponent $1$, while both the avalanche-cluster size and the total number of topplings have exponent $1/3$.

\begin{thm}[Extrinsic exponent]\label{thm:sandpile-radius}
There exist $c_{1}>0$, and for any $\varepsilon>0$,  $c_{2}=c_{2}(\varepsilon)>0$, such that for any $R\geq 1$,
\[c_{1}R^{-1}\leq\mathbf{P}(\diam_{\mathrm{ext}}(\mathrm{AvC})\geq R)\leq c_{2}R^{-1+\varepsilon}.\]
\end{thm}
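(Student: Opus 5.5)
We will deduce Theorem~\ref{thm:sandpile-radius} from the extrinsic tail of the $0$-tree of the $0$-WUSF (Proposition~\ref{prop:extrinsic-WUSF}, extrinsic exponent $1$). The link is the infinite-volume burning bijection of \cite{JR08,BHJ17}. Under it, the set of sites toppling in the \emph{first wave} of the avalanche started at the origin has the law of the $0$-tree $\mathfrak{T}_0$ of the $0$-WUSF. Moreover, the whole avalanche cluster is the union of the successive waves, all of which are contained in the first one. Concretely, the first wave contains every site that topples. Indeed, a site topples in a later wave only if it toppled in all earlier waves, and the origin topples once at the start of every wave. Hence
\[
\mathrm{Wave}_1 \subseteq \mathrm{AvC} \subseteq \mathrm{Wave}_1,
\]
since each wave is a subset of the first. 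So $\mathrm{AvC}=\mathrm{Wave}_1$ as sets (only toppling counts differ). I will check this carefully from the wave decomposition recalled in Section~\ref{subsec:asm}. Granting it, $\diam_{\mathrm{ext}}(\mathrm{AvC})$ has the same law as $\diam_{\mathrm{ext}}(\mathfrak{T}_0)$.

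Both bounds then transfer directly from Proposition~\ref{prop:extrinsic-WUSF}. The upper bound is $\mathbf{P}(\diam_{\mathrm{ext}}(\mathrm{AvC})\ge R)=\mathbb{P}(\diam_{\mathrm{ext}}(\mathfrak{T}_0)\ge R)\le c_2R^{-1+\varepsilon}$. The lower bound $c_1R^{-1}$ is the matching lower bound for the $0$-tree.

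If the lower bound for $\mathfrak{T}_0$ is not stated in that form, I would prove it directly. By Wilson's algorithm rooted at $0$ with wiring at infinity, $x\in\mathfrak{T}_0$ exactly when a simple random walk from $x$ hits $0$. This has probability $\asymp |x|^{-1}$ in $\mathbb{Z}^3$. Applying this to a single point $x$ with $|x|=R$ gives $\mathbb{P}(\diam_{\mathrm{ext}}(\mathfrak{T}_0)\ge R)\ge \mathbb{P}(x\in\mathfrak{T}_0)\ge cR^{-1}$, with no $\varepsilon$ loss. This explains why the lower bound in the statement has no $\varepsilon$.

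The upper bound for $\mathfrak{T}_0$ is where the real work lies. I would follow the multiscale Wilson-algorithm scheme used for Theorem~\ref{thm:extrinsic}, with the non-intersection one-point estimate replaced by the hitting estimate. The steps are:
\begin{enumerate}
\item Place a mesh of points at scale $R/K$ on the sphere of radius $R$.
\item Run the random walks from the mesh points first.
\item Show that, except on an event of probability $R^{-1+\varepsilon}$, these walks build a net of branches that every walk from far away must hit near its starting scale, without entering the ball of radius $R/2$.
\item Conclude that $\diam_{\mathrm{ext}}(\mathfrak{T}_0)\ge R$ essentially requires one of the boundedly many coarse branches to connect to $0$. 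Each such connection costs $R^{-1+o(1)}$.
\end{enumerate}
This is the main obstacle, and I assume it is supplied by Proposition~\ref{prop:extrinsic-WUSF}. The only genuinely new points here are the identification $\mathrm{AvC}=\mathrm{Wave}_1$ and the justification that the infinite-volume sandpile measure is well defined in $d=3$ with the bijection valid almost surely. Both are available from \cite{BHJ17}.
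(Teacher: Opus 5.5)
\textbf{The step that fails is the identification $\mathrm{AvC}=W_1$.} Waves are \emph{not} nested in general, so your upper bound does not follow from the $0$-tree bound.

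A site $y\notin W_1$ adjacent to $W_1$ does not topple in the first wave. It is, however, loaded with $|\{z\sim y: z\in W_1\}|$ extra grains, and a later wave can push it over threshold. Here is a three-dimensional example.
\begin{itemize}
\item In a large box $K$, let $O:=\{x\in K:x_1,x_2,x_3\ge 1\}$. Put $\eta\equiv 5$ on $K\setminus O$ and $\eta(x)=5-\#\{i:x_i=1\}$ on $O$. This is recurrent by the burning test.
\item In the first wave all of $K\setminus O$ topples. Each $x\in O$ receives exactly $\#\{i:x_i=1\}$ grains and stops at height $5$. So $W_1=K\setminus O$, and afterwards all of $O$ sits at height $5$.
\item In the second wave, $(1,0,0)$ and $(0,1,0)$ (height $5$) topple. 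Then $(1,1,0)$ (height $4$ after wave one) topples, then $(1,1,1)$, and then all of $O$.
\end{itemize}
Hence $W_2\not\subseteq W_1$.

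The paper avoids this by controlling the whole avalanche through \cite[Lemma 2.6]{BHJ17} (cf.\ \cite[(9.3)]{Hut20}). The mechanism is a union bound over \emph{all} waves. The wave/two-component-forest bijection counts every wave: each $\xi\in R_{K\setminus\{0\}}$ is the pre-wave configuration of exactly one pair (configuration, wave index), and by Dhar's formula both sets have $|R_{K\setminus\{0\}}|$ elements. This gives $\mathbf{E}_{\nu_K}\bigl[\#\{k:W_{k,K}\in\mathcal{A}\}\bigr]=G_K(0,0)\,\nu_K^{(0)}(\mathfrak{T}_K\in\mathcal{A})$ for every $\mathcal{A}$. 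Since every wave contains $0$, after the limit one gets $\mathbf{P}(\diam_{\mathrm{ext}}(\mathrm{AvC})\ge R)\le G(0,0)\,\mathbb{P}_0(\diam_{\mathrm{ext}}(\mathfrak{T})\ge R/2)$. Proposition~\ref{prop:extrinsic-WUSF} then finishes. Your treatment of $\mathfrak{T}$ itself (the one-point lower bound, and the upper bound taken from the proposition) matches the paper.

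\textbf{The lower bound rests on a misstated correspondence.} You follow the paper's route: $W_1\subseteq\mathrm{AvC}$, then the first-wave/$0$-tree correspondence, then \eqref{eq:one-point-vWUSF}. But $W_1$ does not have the law of $\mathfrak{T}$: $W_1=\varnothing$ unless $H(0)=5$. Lemma~\ref{lem:burning-inf} carries the factor $G(0,0)$ and is stated only for events forcing a neighbour of $0$ into the set.

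You should also be aware that the exact bijective identity is the all-waves one above. For $W_1$ alone it yields only $\mathbf{P}(W_1\in\mathcal{A})\le G(0,0)\,\mathbb{P}_0(\mathfrak{T}\in\mathcal{A})$, with equality failing whenever later waves in $\mathcal{A}$ have positive probability, as in the example. Already for $d=1$, $K=\{-L,\dots,L\}$ and $\mathcal{A}=\{A: A\cap\{\pm1\}\neq\varnothing\}$, the right side of \eqref{eq:finite-wave-tree-correspondence} equals $L(L+2)/(2L+2)>1$ for $L\ge 2$. So a lower bound through the first wave needs an extra argument, for instance control of how many waves reach distance $R$. This caveat concerns \eqref{eq:first-wave-radius-tree} as used in the paper, not only your version of it.
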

\begin{thm}[TNT and avalanche cluster size exponent]\label{thm:sandpile-size}
For any $\varepsilon>0$, there exist $c_{1}=c_{1}(\varepsilon),c_2=c_2(\varepsilon)>0$, such that for any $n\geq 1$,
\[c_{1}n^{-1/3-\varepsilon}\leq\mathbf{P}(|\mathrm{AvC}|\geq n)\leq\mathbf{P}(\mathrm{Av}\geq n)\leq c_{2}n^{-1/3+\varepsilon}.\]
\end{thm}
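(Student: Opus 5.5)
We obtain Theorem~\ref{thm:sandpile-size} from the $0$-tree volume tail, Proposition~\ref{prop:volume-WUSF}, through the burning bijection and the wave decomposition of the avalanche, as in Bhupatiraju--Hanson--J\'arai \cite{BHJ17}. The middle inequality $\mathbf{P}(|\mathrm{AvC}|\ge n)\le\mathbf{P}(\mathrm{Av}\ge n)$ is immediate, because every site in $\mathrm{AvC}$ topples at least once. So only the outer two bounds need proof.

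\textbf{Lower bound.} Via the burning bijection, the first wave is exactly the vertex set of the $0$-tree $T_0$ of the $0$-WUSF (Section~\ref{subsec:asm}). Hence $\mathrm{AvC}\supseteq T_0$ in law, and
\[
\mathbf{P}(|\mathrm{AvC}|\ge n)\ge \mathbf{P}(|T_0|\ge n)\ge c_1 n^{-1/3-\varepsilon}
\]
by the lower bound in Proposition~\ref{prop:volume-WUSF}. Nothing more is needed here.

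\textbf{Upper bound.} We write $\mathrm{Av}=\sum_{k=1}^{N}|W_k|$, where $W_1,\dots,W_N$ are the successive waves. Each wave is a connected set containing $0$, all waves lie in $\mathrm{AvC}$, and $W_k\subseteq W_1$ only up to the nesting of waves; \cite{BHJ17} also bound the number of waves. We fix $R=n^{1/3-\delta}$ with $\delta$ small and split as follows:
\[
\mathbf{P}(\mathrm{Av}\ge n)\le \mathbf{P}(\diam_{\mathrm{ext}}(\mathrm{AvC})\ge R)+\mathbf{P}\bigl(\mathrm{Av}\ge n,\ \mathrm{AvC}\subseteq B(R)\bigr).
\]
The first term is at most $c R^{-1+\varepsilon}=n^{-1/3+O(\varepsilon+\delta)}$ by Theorem~\ref{thm:sandpile-radius}. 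For the second term, on $\{\mathrm{AvC}\subseteq B(R)\}$ we have $|\mathrm{AvC}|\le CR^3=Cn^{1-3\delta}$. Moreover, the number of topplings at any site $x$ is at most the number of waves reaching $x$, which is at most $N$.

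To handle $N$, we use the standard fact that $\mathrm{Av}(0)\le$ (number of topplings at $0$) $\times|\mathrm{AvC}|$, since each wave topples every site at most once. We also use that the number of topplings at $0$ has a tail controlled by the return estimate in \cite{BHJ17}: it equals the number of waves, and $\mathbf{P}(N\ge m)$ decays at least like the probability that the $k$-th wave's boundary is still surrounding the origin. Concretely, we bound
\[
\mathbf{P}(\mathrm{Av}\ge n,\ \mathrm{AvC}\subseteq B(R))\le \mathbf{P}(N\ge n^{\delta})+\mathbf{P}\bigl(|\mathrm{AvC}|\ge n^{1-\delta}\bigr).
\]
The second term vanishes for $R=n^{1/3-\delta}$ once $\delta$ is small relative to the factor $3\delta$, after adjusting constants. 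So everything reduces to showing $\mathbf{P}(N\ge n^{\delta})\le n^{-1/3}$.

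\textbf{Main obstacle.} The hardest step is this control of the number of waves $N$, or equivalently a bound on $\mathbb{E}[\text{topplings at }0]$ restricted to large events. The first thing to try is the inequality $\mathbf{E}[\mathrm{Av}]$-type bound $\mathbf{P}(\mathrm{Av}\ge n)\le \mathbf{P}(\diam_{\mathrm{ext}}\ge R)+n^{-1}\mathbf{E}[\mathrm{Av};\,\mathrm{AvC}\subseteq B(R)]$. Here $\mathbf{E}[\text{topplings at }x]=G(0,x)$, the Green function identity of Dhar, so the Markov term is at most
\[
n^{-1}\sum_{|x|\le R}G(0,x)\asymp R^2/n=n^{-1/3-2\delta}.
\]
The Dhar formula gives this bound directly, with no wave counting. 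The restriction to $\{\mathrm{AvC}\subseteq B(R)\}$ is harmless because we drop it and sum only over $|x|\le R$, which is legitimate since on that event topplings occur only in $B(R)$. With this route the upper bound is $n^{-1/3+\varepsilon'}$, and the genuinely delicate input is Theorem~\ref{thm:sandpile-radius}, which we take as given.
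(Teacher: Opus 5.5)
Your final argument is essentially the paper's proof. The upper bound is exactly $\mathbf{P}(\mathrm{Av}\ge n)\le\mathbf{P}(\diam_{\mathrm{ext}}(\mathrm{AvC})\ge m)+n^{-1}\sum_{x\in B_m}G(0,x)\lesssim m^{-1+\varepsilon}+m^{2}/n$ with $m\approx n^{1/3}$, so the wave-counting detour and the unproved bound on $\mathbf{P}(N\ge n^{\delta})$ should simply be deleted; the lower bound goes through the first wave and Proposition~\ref{prop:volume-WUSF}, but note that $W_1(H)$ is not equal in law to $\mathfrak{T}$ (it can be empty): what holds is $\mathbf{P}(|W_1(H)|\ge n)=G(0,0)\,\mathbb{P}_0(|\mathfrak{T}|\ge n)$ for $n\ge2$, which gives your bound after absorbing the positive constant $G(0,0)$ into $c_1$ and handling $n=1$ by adjusting constants.
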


We summarize exponents for ASM in various dimensions in the following table. See Remark \ref{rem:2D} for relevant discussiones in 2D.
\begin{table}[H]
\centering
\small
\renewcommand{\arraystretch}{1.15}
\begin{tabular}{|c|c|c|}
\hline
Dimension  & Extrinsic &  TNT / cluster size  \\
\hline
$d=2$  & $\left[0,\frac34\right]$ & $\left[0,\frac38\right]$ \\
$d=3$ &   $1$ & $\frac13$ \\
$d=4$ &  $2^\dagger$ & $\frac12^\dagger$ \\
$d\geq 5$  & $2$ & $\frac12$ \\
\hline
\end{tabular}
\caption{(Bounds of) tail exponents for the  diameter and size of the avalanche cluster.  The $d=2$ row records the bounds of Bhupatiraju--Hanson--J\'arai \cite{BHJ17}. The $d=3$ row is established in this paper; the best previously known bounds from \cite{BHJ17} were $[\frac{1}{6},1+\alpha]$ and $[\frac{1}{18},\frac{1+\alpha}{3}]$, respectively.  In $d=4$, the leading powers are the mean-field values but are accompanied by non-trivial logarithmic corrections (hence the
†); the four-dimensional UST input is due to Hutchcroft--Sousi \cite{HS23}, while the precise logarithmic corrections for sandpiles remain open. The mean-field row $d\geq 5$ follows from Hutchcroft \cite{Hut20}.}
\label{tab:0wusf-sandpile-exponents}
\end{table}

The sandpile proofs start from tail estimates for the $0$-tree in the $0$-WUSF and then transfer them to avalanches through the generalized burning bijection and the wave decomposition \cite{IKP94,JR08}. The point used for the lower bounds is the following first-wave correspondence: in finite volume, every {\it non-trivial} first-wave event is counted by the same two-component spanning forests that define the $0$-tree, up to the deterministic factor $G_{K}(0,0)$; after taking the infinite-volume limit this gives, for example,
\begin{equation*}
\mathbf{P}\bigl(\diam_{\mathrm{ext}}(W_{1})\ge R\bigr)=G(0,0)\,\mathbb{P}_{0}\bigl(\diam_{\mathrm{ext}}(\mathfrak{T})\ge R\bigr),\qquad R\ge1,
\end{equation*}
and the analogous identity for $|W_{1}|\ge n$, $n\ge2$. Since the factor $G(0,0)$ is a positive finite constant in dimension three, it does not affect the exponent. The upper bounds combine radius control with the fact that the number of waves has integrable tails in dimension three.

The paper is organized as follows. Section~\ref{sec:Prelim} collects notation and background on LERW, UST/USF, the $0$-WUSF, and the Abelian sandpile model. In Section~\ref{sec:past} we prove the tail exponents for the past of the origin in the three-dimensional UST. Section~\ref{sec:0wusf} contains the corresponding estimates for the $0$-tree in the $0$-WUSF. Finally, Section~\ref{sec:sandpile} applies these estimates to the Abelian sandpile model and briefly discusses why current methods do not apply to the 2D sandpile.

\medskip

\noindent {\bf Acknowledgements:} XL and RL are supported by the National Key R\&D Program of China (No.~2021YFA1002700).  DS is supported by  JSPS Grant-in-Aid for Scientific Research (B) 22H01128
and 26K00609. The authors thank Jack Hanson, Tom Hutchcroft and Yuval Peres for comments on a preliminary version of this work.

\section{Preliminaries}\label{sec:Prelim}
\subsection{Notation}\label{sec:notation}
\noindent\textbf{Constants.} Throughout the paper, $a,c,c_{1},M,\ldots$ denote positive finite constants whose values may change from line to line unless explicitly stated otherwise. Any dependence on auxiliary parameters will be indicated at first appearance.

\medskip

\par\noindent\textbf{Sets and metrics.} For $x\in\mathbb{Z}^{d}$, let $\|x\|$ denote the $\ell^{\infty}$-norm of $x$, and define
\[
B(x,r):=\{y\in\mathbb{Z}^{d}:\|y-x\|\leq r\},\quad \partial B(x,r):=\{y\notin B(x,r): \exists z\in B(x,r)\text{ with } y\sim z\}.
\]
We write $B_{R}:=B(0,R)$ for short. For $A\subseteq\mathbb{Z}^{d}$, we define its extrinsic diameter by
\[
\diam_{\mathrm{ext}}(A):=\sup_{x,y\in A}\|x-y\|.
\]
Whenever $A$ is a connected subgraph of a tree, $\diam_{\mathrm{int}}(A)$ denotes its graph diameter in the intrinsic tree metric.

\medskip
\par\noindent\textbf{Asymptotic notation.} We write $f\lesssim g$, $f\gtrsim g$, and $f\asymp g$ if there exists a positive constant $C$ such that $f\leq Cg$, $f\geq C^{-1}g$, and both inequalities hold, respectively.

\medskip
\par\noindent\textbf{Stopping times.} For a time-parametrized path $\gamma=(\gamma(t))_{t\geq0}$ and a set $A\subseteq\mathbb{Z}^{d}$, let
\[
\tau_{\gamma}(A):=\inf\{t\geq0:\gamma(t)\in A\},
\]
with the convention that $\inf\varnothing=\infty$. If $A=\partial B(x,r)$, we write $\tau_{\gamma}(x,r)$, and if $x=0$ we write $\tau_{\gamma}(r)$ for short. When the path is clear from context, we suppress it from the notation.

\medskip
\par\noindent\textbf{Loop erasure of paths.}
If $\pi=[\pi_{0},\ldots,\pi_{m}]$ is a finite nearest-neighbour path, its chronological loop erasure $\operatorname{LE}(\pi)$ is defined by setting $t_{0}:=\max\{j:\pi_{j}=\pi_{0}\}$ and, recursively, $t_{i+1}:=\max\{j:\pi_{j}=\pi_{t_{i}+1}\}$ until $t_{i}=m$; the loop-erased path is the self-avoiding path $[\pi_{t_{0}},\pi_{t_{1}},\ldots,\pi_{m}]$. For an infinite path that visits every vertex only finitely many times, $\operatorname{LE}(\pi[0,\infty))$ is defined by the same chronological procedure; equivalently, its restriction to each finite set agrees with the loop erasure of a sufficiently long initial segment.
\subsection{The loop-erased random walk}
In this subsection, we will recall some estimates of loop-erased random walk. 
Let $S[0,\infty)$ be a simple random walk on $\mathbb{Z}^3$ started from $x\in\mathbb{Z}^3$, whose law we denote by $P_x$. Write $P=P_0$ for short. Let $\gamma:={\rm LE}(S[0,\infty))$ be the loop-erasure of $S$, which we refer to as the infinite loop-erased random walk (ILERW) in $\mathbb{Z}^3$.

We begin with the non-intersection probability of an ILERW and another independent simple random walk.
\begin{prop}[{\cite[Theorem 1.2]{LS19}}]\label{prop:lerw-utc}
Let $\gamma$ be an ILERW started from the origin. Let $S$ be an independent simple random walk started from the origin. Denote their joint law by $P$. Let $\tau_{R}$ be the first hitting time of $\partial B_R$ with respect to $S$. Then, there exists a constant 
\begin{equation}\label{eq:alphadef}
 \alpha\in\left[\frac13,1\right)   
\end{equation}
(which we refer to as the (two-arm) {\rm intersection exponent} of 3D LERW), such that
\[P\bigl(\gamma\cap S[1,\tau_{R}]=\varnothing\bigr)\asymp R^{-\alpha}.\]
\end{prop}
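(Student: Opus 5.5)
Since this is \cite[Theorem 1.2]{LS19}, we sketch how we would reprove it rather than cite it. Write $\mathrm{Es}(R):=P\bigl(\gamma\cap S[1,\tau_R]=\varnothing\bigr)$, the non-intersection (escape) probability. The plan has three steps: show that $\mathrm{Es}$ is \emph{quasi-multiplicative}, deduce the existence of an exponent with up-to-constants bounds, and identify that exponent through the LERW growth exponent to locate it in $[1/3,1)$.

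\textbf{Step 1: quasi-multiplicativity.} We aim to show
\[
\mathrm{Es}(rR)\asymp \mathrm{Es}(r)\,\mathrm{Es}(R),\qquad r,R\ge 1.
\]
The upper bound $\mathrm{Es}(rR)\lesssim \mathrm{Es}(r)\mathrm{Es}(R)$ comes from splitting the event into an inner non-intersection inside $B_r$ and an outer one in the annulus $B_{rR}\setminus B_r$. These two pieces must be asymptotically independent, which needs two inputs:
\begin{itemize}
\item the domain Markov property of the ILERW, i.e.\ the conditional law of $\gamma$ beyond its exit from $B_r$ given its initial segment;
\item the decoupling of the near and far parts of the infinite LERW, whereby $\gamma$ inside $B_{r}$ is comparable to the loop-erasure of $S$ stopped at $\partial B_{sr}$ for a large constant $s$.
\end{itemize}
The lower bound needs a \emph{separation lemma}: conditioned on non-intersection up to $\partial B_r$, with probability bounded below the endpoints of $\gamma$ and $S$ are at distance $\gtrsim r$ from each other's paths. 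Once separated, the two pieces can be glued at a cost of only a constant factor, using Harnack-type estimates for SRW and for LERW conditioned to avoid a set.

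\textbf{Step 2: existence of the exponent.} From Step 1, $\log \mathrm{Es}(2^k)$ is additive up to an $O(1)$ error. A Fekete-type argument then gives $\alpha:=\lim_k -\log_2 \mathrm{Es}(2^k)/k$, and the $O(1)$ control upgrades this to $\mathrm{Es}(2^k)\asymp 2^{-\alpha k}$. Monotonicity of $\mathrm{Es}$ in $R$ interpolates between dyadic scales, giving $\mathrm{Es}(R)\asymp R^{-\alpha}$ for all $R$.

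\textbf{Step 3: the range $\alpha\in[1/3,1)$.} We would relate $\alpha$ to the growth exponent $\beta$ of 3D LERW. By the last-exit decomposition, the expected number of steps of the LERW from $0$ to $\partial B_R$ is
\[
\asymp \sum_{x\in B_R} G(0,x)\,\mathrm{Es}(\|x\|)\asymp R^{2}\,\mathrm{Es}(R),
\]
where the second relation uses Step 1 to replace $\mathrm{Es}$ at each point $x$ by a two-sided escape probability comparable to $\mathrm{Es}(\|x\|)$. Combined with the existence of $\beta$ (Shiraishi), this gives $\beta=2-\alpha$. Lawler's bounds $\beta\in(1,5/3]$ then yield $\alpha\in[1/3,1)$.

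The main obstacle is the separation lemma in Step 1. Unlike the SRW case, the LERW is not Markov, and conditioning on its future changes its past, so the usual Lawler iteration argument must be redone with a loop-erasure-compatible conditioning. For this we would first try Shiraishi's coupling of the ILERW with the loop-erasure of walks stopped at large radii, which gives the needed approximate independence between scales.
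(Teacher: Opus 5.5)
The paper does not prove this statement; it imports it from \cite[Theorem~1.2]{LS19}. Your reconstruction has a genuine gap in Step~1, and it is not the one you flag. The separation lemma is already available (it is Lemma~\ref{lem:separation} here). Together with the domain Markov property and the near/far decoupling, it gives you only
\[
\mathrm{Es}(rR)\asymp \mathrm{Es}(r)\,\mathrm{Es}(r,rR).
\]
Here $\mathrm{Es}(r,rR)$ is the probability of non-intersection in the annulus $B_{rR}\setminus B_r$, starting from a well-separated configuration on $\partial B_r$. This factor is not $\mathrm{Es}(R)$. After rescaling by $r$, it is a non-intersection probability across an annulus of ratio $R$ on the lattice $r^{-1}\mathbb{Z}^3$, whereas $\mathrm{Es}(R)$ lives on $\mathbb{Z}^3$. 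Both halves of your claimed $\mathrm{Es}(rR)\asymp\mathrm{Es}(r)\mathrm{Es}(R)$ therefore need $\mathrm{Es}(r,rR)\asymp\mathrm{Es}(R)$ uniformly in $r$. In other words, escape probabilities must be insensitive to the lattice spacing up to a multiplicative constant that does not accumulate over the $\log R$ dyadic scales. Nothing in Step~1 supplies this: ``asymptotic independence'' of the inner and outer pieces produces the annulus factor, not $\mathrm{Es}(R)$.

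For SRW intersection exponents this input comes from Brownian motion, whose annulus non-intersection probabilities are exactly scale invariant. For 3D LERW no continuum estimate of that strength is available. Kozma's scaling limit gives approximate scale invariance with no rate, and through your Fekete argument that yields only $\mathrm{Es}(R)=R^{-\alpha+o(1)}$, not $\mathrm{Es}(R)\asymp R^{-\alpha}$. Upgrading to up-to-constants bounds is precisely the content of \cite{LS19}. There, a coupling of two LERW--SRW pairs with different initial configurations, conditioned to avoid each other, shows that the conditioned pair forgets its starting data fast enough. As a result the scale-to-scale ratios $\mathrm{Es}(2^{n+1})/\mathrm{Es}(2^{n})$ stabilize with summable errors, so the dyadic product has bounded multiplicative error. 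Steps~2 and~3 are fine once Step~1 is granted: two-sided $O(1)$ errors do give $\mathrm{Es}(2^k)\asymp 2^{-\alpha k}$, and $\beta=2-\alpha$ with $\beta\in(1,5/3]$ gives $\alpha\in[1/3,1)$. As written, however, Step~1 assumes the main difficulty of the theorem.
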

For later use, we also record a simple consequence of Proposition \ref{prop:lerw-utc} that allows us to deal with random walks from outside to inside.
\begin{lemma}\label{lem:lerw-utc-rev}
Let $\gamma$ be an infinite LERW started from the origin. Let $S$ be an independent simple random walk started from some point $y\in\partial B_R$, conditioned to hit the origin. Denote their joint law by $P'_y$. Let $\tau_{0}$ be the first hitting time of the origin with respect to $S$. Then,
\[P'_y\bigl(\gamma\cap S[0,\tau_{0}]=\{0\}\bigr)\asymp R^{-\alpha}.\]
\end{lemma}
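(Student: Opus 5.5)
Time reversal of the conditioned walk turns the statement into Proposition~\ref{prop:lerw-utc}: a walk from $y$ conditioned to hit $0$, stopped at $\tau_0$, reversed, is a walk from $0$ until its last visit near $\partial B_R$ and its last visit to $y$. Concretely, writing the conditioned law as a Doob $h$-transform with $h(z)=G(z,0)$, the reversed path $\omega^R=S[0,\tau_0]$ read backwards has weight $P_0$-like, namely
\[
P'_y(S[0,\tau_0]=\omega)=\frac{(1/6)^{|\omega|}}{G(y,0)},
\]
for paths $\omega$ from $y$ to $0$ avoiding $0$ before the end. Since $G(y,0)\asymp R^{-1}$ uniformly for $y\in\partial B_R$, it suffices to show that
\[
\sum_{\omega:\,0\to y,\ \omega\text{ avoids }0\text{ after time }0}(1/6)^{|\omega|}\,P\bigl(\gamma\cap\omega=\{0\}\bigr)\asymp R^{-1-\alpha}.
\]

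\textbf{Upper bound.} The sum above equals $P_0(\gamma\cap S[1,\sigma_y]=\varnothing,\ \sigma_y<\infty,\ S \text{ does not return to }0)$ up to constants, where $\sigma_y$ is a hitting time of $y$. I will bound it by the event that $S$ hits $y$ and that $\gamma\cap S[1,\tau_{R/2}]=\varnothing$. By the strong Markov property at $\tau_{R/2}$, and using the separation-free bound that $S$ reaches $y$ from $\partial B_{R/2}$ with probability $\lesssim R^{-1}$ by the Green's function, the sum is bounded by $R^{-1}$ times the quantity in Proposition~\ref{prop:lerw-utc} at scale $R/2$. That gives $\lesssim R^{-1-\alpha}$. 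The first-return-to-$0$ constraint is harmless, since it only reduces the sum.

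\textbf{Lower bound.} This is the main obstacle: conditioned on non-intersection up to $\tau_{R/2}$, the walk must still reach $y$ and avoid $\gamma$ with a probability $\gtrsim R^{-1}$. I will use a separation lemma for LERW/SRW non-intersection, of the kind in \cite{LS19}, on which the up-to-constants estimate of Proposition~\ref{prop:lerw-utc} rests. It says that on the non-intersection event up to $\tau_{R/4}$, with conditional probability bounded below, the endpoints are well separated: $S(\tau_{R/4})$ lies at distance $\geq cR$ from $\gamma$, and $\gamma$ after exiting $B_{R/4}$ stays in a cone away from that point. Given this, with probability $\gtrsim 1$ the walk moves through a tube to a point near $y$, say within distance $cR$ of it, without touching $\gamma$. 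It then hits $y$ before returning to distance $cR/2$ from $\gamma$ with probability $\gtrsim R^{-1}$, which I get by comparing to the Green's function in a ball of radius $cR$. The forward part of $\gamma$ beyond $B_{R/4}$ is handled by the same separation and by the domain Markov property of LERW, since its law depends on the future of the walk generating $\gamma$. Combining the pieces gives $\gtrsim R^{-\alpha}\cdot R^{-1}$, and dividing by $G(y,0)\asymp R^{-1}$ gives $\gtrsim R^{-\alpha}$.

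A cleaner alternative for the lower bound uses the quasi-multiplicativity and up-to-constants estimates proved in \cite{LS19}, in the form $P(\gamma\cap S[1,\tau_R]=\varnothing,\ S(\tau_R)\in A)\gtrsim R^{-\alpha}$ for a macroscopic boundary set $A$. I would first check whether \cite{LS19} states this directly, and otherwise rederive it from the separation lemma as above.
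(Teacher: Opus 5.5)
Your argument is correct in outline and follows the only route the paper indicates. The paper gives no proof; it just calls the lemma a simple consequence of Proposition~\ref{prop:lerw-utc}, i.e.\ time reversal. Your $h$-transform identity is exact. The normalisation is $P_y(\tau_0<\infty)=G(y,0)/G(0,0)$ rather than $G(y,0)$, which is harmless. Your path sum is the expected number of visits to $y$ before $T:=\inf\{n\ge1:S_n\in\gamma\}$ for the walk from $0$, so it lies between $P_0(\sigma_y<T)$ and $G(0,0)P_0(\sigma_y<T)$. With that, your upper bound via the strong Markov property at $\tau_{R/2}$ is complete. Where you go beyond the paper is in recognising that the lower bound for a \emph{fixed} $y$ does not follow from Proposition~\ref{prop:lerw-utc} alone and needs the separation Lemma~\ref{lem:separation}. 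Its truncation of $\gamma$ costs nothing, since $P(s(R/4)>1/10)\gtrsim P(s(R/4)>0)\ge P(\gamma\cap S[1,\tau_{R/4}]=\varnothing)\asymp R^{-\alpha}$.

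The step to tighten is the continuation of $\gamma$. Lemma~\ref{lem:separation} controls only $\pi:=\gamma[0,\tau_\gamma(R/4)]$. It gives no cone for the rest of $\gamma$, which must avoid $S[1,\tau_S(R/4)]$, your tube for $S$, and a neighbourhood of $y$. By the domain Markov property, the continuation is the loop erasure of a walk from the tip of $\pi$ conditioned never to return to $\pi$. You need a lower bound, uniform over separated $\pi$, on the conditional probability that this walk:
\begin{itemize}
\item exits $B(\pi_{\mathrm{tip}},R/40)$ on the outer side;
\item follows a tube to $\partial B_{KR}$ avoiding the forbidden regions;
\item never returns to $B_{2R}$, so that the loop erasure inside $B_{2R}$ stays in the tube.
\end{itemize}
That $\pi$ minus its tip lies in $B_{R/4}$ is what makes such a bound attainable. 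However, conditioning on avoiding $\pi$ can be very costly, so this is a separation-type estimate at the tip, not a formal consequence of the Markov property. It is the same kind of input the paper treats as standard in the proof of Lemma~\ref{lem:two-point-decompose}. Finally, your ``cleaner alternative'' does not suffice on its own: knowing $S(\tau_R)\in A$ says nothing about where $\gamma$ lies relative to $y$, so the same continuation step is still required.
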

We recall a version of the separation lemma, which states that, conditioned on the non-intersection event above, there is a good chance for the SRW and LERW to be well-separated near their endpoints. Let $\tau_{\gamma}(R)$ and $\tau_{S}(R)$ be the first hitting times of $\partial B_R$ by $\gamma$ and $S$, respectively, and define
\[
\begin{aligned}
s(R):=\sup\Big\{\delta\in[0,1]:\;&
\bigl(\gamma[0,\tau_{\gamma}(R)]\cup B(\gamma(\tau_{\gamma}(R)),\delta R)\bigr)\\
&\cap
\bigl(S[1,\tau_{S}(R)]\cup B(S(\tau_{S}(R)),\delta R)\bigr)=\varnothing
\Big\}.
\end{aligned}
\]
We say that $\gamma$ and $S$ are ``$\delta$-separated at the end at scale $R$'' if $s(R)\geq\delta$.
\begin{lemma}[{\cite[Lemma 6.5]{Shi18}}]\label{lem:separation}
Recall the setup of Proposition \ref{prop:lerw-utc}. There exists $c>0$, such that for any $R>0$, we have
\[P(s(R)>1/10|s(R)>0)\geq c.\]
\end{lemma}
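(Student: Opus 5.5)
The plan is to follow Lawler's classical separation-lemma scheme, adapted to a loop-erased path. Given the non-intersection event $\{s(R)>0\}$, I will show that the pair $(\gamma,S)$ has uniformly positive conditional probability to end up $1/10$-separated. The argument works scale by scale over dyadic radii $r_k=2^{-k}R$ in a final window $[R/2^{K},R]$, and it uses that the non-intersection probability is of order $R^{-\alpha}$ (Proposition~\ref{prop:lerw-utc}).

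\textbf{Step 1: replace the LERW by a random-walk-generated object with a domain Markov property.} Throughout, $\gamma$ is realized as $\mathrm{LE}(S^1[0,\infty))$ for an independent simple random walk $S^1$. Loops created after $S^1$ leaves $B_R$ can erase part of $\gamma[0,\tau_\gamma(R)]$, so I will first compare the law of $\gamma$ up to $\tau_\gamma(R)$ with the loop erasure of $S^1$ stopped at the exit from $B_{CR}$. Standard LERW estimates (Lawler; Masson; Kozma's "the path near the origin is not changed by far-away loops" in 3D) give that, away from an event of small probability, the initial segments agree inside $B_{R}$. I will use the resulting robustness of the non-intersection event without claiming to control the total variation distance of the two laws.

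Given $\gamma[0,\tau_\gamma(r)]$, the continuation of $\gamma$ is a random walk conditioned to avoid that path, loop-erased. I will use this domain Markov property to continue both paths from scale $r$ to scale $2r$.

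\textbf{Step 2: one-scale improvement and "no-badness-accumulation".} Let $\delta_k:=s(r_k)$ be the separation at scale $r_k$. I will prove two estimates.

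(a) \emph{Improvement.} If $\delta_k\ge\delta$, then with probability at least $c(\delta)>0$ the pair stays disjoint up to scale $R$ and ends $1/10$-separated. For this I will construct tubes: $S$ follows a thin tube in one half-space and $\gamma$ is forced into another. Forcing $\gamma$ requires steering $S^1$ and then using that LERW stays in the tube with positive probability, which follows from the loop-erasure being a subset of the walk trace.

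(b) \emph{Doubling-type bound.} Conditioned on non-intersection up to scale $r_k$ with bad separation $\delta_k\le 2^{-j}$, the probability of surviving to scale $2r_k$ is at most $e^{-cj}$ times the unconditional survival probability from a well-separated start. Heuristically, a badly separated configuration has, near the tips, a ball of radius $2^{-j}r_k$ in which each path must pass near the other, and each dyadic subscale gives an independent constant chance to intersect. The random walk hits the LERW with positive probability at every scale when they come close; this uses that 3D LERW has dimension $>1$, so it is "hittable" (Sapozhnikov--Shiraishi).

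\textbf{Step 3: combine.} Summing over the last scale at which separation was good, and using the up-to-constants estimate $P(s(r)>0)\asymp r^{-\alpha}$ (which gives $P(s(2r)>0)\ge cP(s(r)>0)$), I expect to obtain $P(s(R)>1/10)\ge cP(s(R)>0)$. This step is the standard geometric-series argument.

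The main obstacle is Step 2(b) for the LERW side. Conditioning $\gamma$ on its past is not a Markov chain in the naive sense, and the future walk $S^1$ can destroy the tip configuration. I would first try to run the whole argument on the pair $(S^1,S)$ with the event "$S$ avoids $\mathrm{LE}(S^1)$", using the hittability estimate for LERW uniformly over initial configurations. I would handle tip erasure by the quasi-independence bound that loops from far scales alter the path near scale $r$ only with small probability.
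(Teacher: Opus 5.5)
The paper gives no proof of this lemma; it is imported from \cite[Lemma 6.5]{Shi18}. Taking Proposition~\ref{prop:lerw-utc} as a black box, your Step 3 is sound, and simpler than you make it. Doubling, $P(s(2r)>0)\ge c_0P(s(r)>0)$, plus a uniform bound $P(s(2r)>0,\,0<s(r)\le 2^{-j})\le \epsilon_j P(s(r)>0)$ with $\epsilon_j\to0$, gives $P(s(R/2)>2^{-j}\mid s(R/2)>0)\ge c_0/2$ for one fixed $j$. One application of (a) then finishes. (This reverses the logical order of the sources. In the literature the up-to-constants estimate behind Proposition~\ref{prop:lerw-utc} is itself derived from the separation lemma, through quasi-multiplicativity. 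A self-contained proof cannot assume doubling, which is why Lawler's scheme advances in steps of length $\asymp\sqrt{\delta}$: badly separated configurations then die at a stretched-exponential rate that beats the polynomial cost of (a).)

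The genuine gap is Step 2(b). It carries the whole content of the lemma, and your justification covers only one of the two ways separation can fail. Either $\gamma[0,\tau_\gamma(r)]$ passes within $2^{-j}r$ of the tip of $S$ (close tips is a sub-case), or $S[1,\tau_S(r)]$ passes within $2^{-j}r$ of the tip of $\gamma$.
In the first case, the fresh continuation of $S$ must escape a loop-erased path that crosses every annulus between radii $2^{-j}r$ and $r/2$ around its starting point. Hittability of $\gamma$ (a version of Lemma~\ref{lem:hittability} at arbitrary points of $\gamma$) is the right tool here.
In the second case, the trace of $S$ is frozen and its tip may be at distance of order $r$. The only thing that can kill the configuration is the continuation of $\gamma$ hitting a fixed random-walk trace. ``The random walk hits the LERW'' says nothing about this. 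Neither does the fact that the generating walk $X$ hits $S$ with positive probability at each scale: a hit point survives loop-erasure only if $X$ afterwards avoids the loop-erasure of its own past, i.e.\ escapes from the tip of an LERW, which is unlikely.
You need a separate, genuinely loop-erased input, uniform over the conditioned past. One candidate: $\mathrm{LE}(X)$ contains all cut points of $X$, a set of dimension larger than $1$ in three dimensions, so an independent random-walk trace should hit it with probability bounded below at each scale.

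Even granting that input, several further points need repair.
\begin{itemize}
\item Lemma~\ref{lem:hittability} is unconditional. After a union bound over the $\asymp 2^{3j}$ possible locations, its failure probability is a power of $2^{-j}$, whereas you condition on an event of probability $\asymp r^{-\alpha}$. For fixed $j$ and large $r$, simply subtracting it is useless. You must show that failure of hittability near scale $r$ is quasi-independent of non-intersection. One way is to apply the domain Markov property at scale $r/4$, use doubling, and prove hittability of the continuation of $\gamma$ uniformly over its initial segment.
\item Commit to the ILERW framework. Its domain Markov property freezes $\gamma[0,\tau_\gamma(r)]$, which makes Step 1 unnecessary and rules out any ``tip destruction''. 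Your fallback on the pair $(S^1,S)$ rests on a false premise. The walk continuing from $S^1(\tau^1_r)$ avoids $\mathrm{LE}(S^1[0,\tau^1_r])$ minus its tip only if it escapes from the tip of an LERW, an event of probability of order $r^{-\alpha}$ on average. So the configuration near the tip at scale $r$ is altered with probability close to one, not small.
\item In (a), ``the loop-erasure lies in the trace'' is not enough. The continuation of $\gamma$ is the loop-erasure of a walk conditioned to avoid $\gamma[0,\tau_\gamma(r)]$ forever. You therefore need a boundary-Harnack-type lower bound on the probability of escaping through your tube relative to escaping at all.
\end{itemize}
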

Based on Lemma~\ref{lem:separation}, we can derive the following lemma, which will be useful when proving the lower bound in Theorem~\ref{thm:volume}.
\begin{lemma}\label{lem:two-point-decompose}
Suppose $4r<R$ and $x\in\partial B_R$, $y\in\partial B_r$. Suppose that $\gamma$ is an ILERW started from $0$. $S_{1}$ and $S_{2}$ are two independent SRWs started from $x$ and $y$. Write $P_{x,y}$ for their joint law. Let $E$ denote the intersection of the following events:
\begin{itemize}
\item[1.]\ $S_{1}$ hits $0$ and, writing $\tau_{1}$ for the first hitting time of $0$, $S_{1}[0,\tau_{1}]\cap\gamma=\{0\}$.
\item[2.]\ $S_{2}$ hits $\operatorname{LE}(S_1[0,\tau_1])$ before it intersects $\gamma$.
\end{itemize}
Then,
\begin{equation}\label{eq:Elb}
P_{x,y}(E)\gtrsim (\log r)^{-1}R^{-1-\alpha}.
\end{equation}
\end{lemma}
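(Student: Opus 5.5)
We decompose the event $E$ into three pieces: a non-intersection event for $S_1$ and $\gamma$, a "good shape" event at scale $r$, and a hitting event for $S_2$. Since $S_1$ and $S_2$ are independent and $\gamma$ is independent of both, we first work with the pair $(\gamma,S_1)$. The probability that $S_1$ started from $x\in\partial B_R$ hits $0$ is $\asymp R^{-1}$ in $\mathbb{Z}^3$. Conditionally on hitting, Lemma~\ref{lem:lerw-utc-rev} gives probability $\asymp R^{-\alpha}$ that $S_1[0,\tau_1]\cap\gamma=\{0\}$. Hence event~1 has probability $\asymp R^{-1-\alpha}$. It then suffices to show that, conditionally on event~1, $S_2$ hits $\operatorname{LE}(S_1[0,\tau_1])$ before $\gamma$ with probability $\gtrsim(\log r)^{-1}$.

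\textbf{Step 1: separation.} By time reversal, $S_1[0,\tau_1]$ conditioned to hit $0$ is, near $0$, comparable to a walk from $0$ run until exiting. Applying Lemma~\ref{lem:separation} at scale $2r$ (via its reversed analogue, as in Lemma~\ref{lem:lerw-utc-rev}), we get the following with conditional probability $\ge c$: $\gamma$ and $S_1$ are $1/10$-separated at scale $2r$, i.e.\ their first crossings of $\partial B_{2r}$ land at points far apart and lie in essentially disjoint "directions".

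\textbf{Step 2: fattening the configuration.} We strengthen this good configuration at constant cost, using the standard extension that follows from the separation lemma together with the gambler's-ruin and Harnack-type arguments in \cite{Shi18}. Concretely, we require that inside $B(0,2r)\setminus B(0,r/2)$ the path $S_1$ (and therefore its loop erasure, which must pass from scale $2r$ to $0$) travels through a tube $T_1$, while $\gamma$ stays in a disjoint tube $T_2$ at distance $\gtrsim r$ from $T_1$. The location of these tubes can be chosen relative to $y$ by rotational symmetry, so that $y$ lies adjacent to $T_1$ and far from $T_2$; this costs only a constant factor. Gluing the inner and outer pieces through a separation-based decoupling keeps the overall cost at $R^{-1-\alpha}$ up to constants.

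\textbf{Step 3: $S_2$ hits the loop erasure.} The loop erasure $\eta:=\operatorname{LE}(S_1[0,\tau_1])$ must cross the annulus $B_{2r}\setminus B_{r/2}$ inside $T_1$. We need a lower bound on the probability that $S_2$, started at $y$ next to this crossing, hits $\eta$ before leaving a neighbourhood of $T_1$, and in particular before touching $\gamma\subset T_2$. A 3D LERW crossing at scale $r$ has capacity comparable to $r$ with good probability; this is a standard LERW hittability estimate, and it also follows from the growth exponent being $<2$. With only a second-moment (Green's function) argument, however, we get hitting probability $\gtrsim(\log r)^{-1}$ uniformly, and this is the source of the logarithm. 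On that event $S_2$ has not reached $T_2$, so event~2 holds.

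\textbf{Main obstacle.} The main difficulty is Step 3: the loop erasure of a walk conditioned on non-intersection is not a plain LERW, so hittability estimates do not transfer directly. I would handle this by restricting to the well-separated configuration, where the conditioning is mild in the annulus, then compare to unconditioned LERW through domain Markov and Harnack arguments, and finally run a second-moment computation for the number of points of $\eta$ visited by $S_2$. This yields the $(\log r)^{-1}$ loss.

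Multiplying the three lower bounds gives \eqref{eq:Elb}.
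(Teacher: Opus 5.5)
Your plan matches the paper's proof. You show $P_{x,y}(H^{0})\asymp R^{-1-\alpha}$. You then impose separation, steering and tube/no-return events of constant conditional cost (Lemma~\ref{lem:separation} plus Harnack), which force $\operatorname{LE}(S_1[0,\tau_1])$ to contain a crossing of an annulus around $y$ that $\gamma$ avoids. Finally you use a uniform $c/\log r$ hitting bound for such crossings, which is exactly Lemma~\ref{lem:hittinglb}.

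The ``main obstacle'' you raise is not actually an obstacle. That bound holds deterministically for every nearest-neighbour crossing: one point per $\ell^\infty$-shell gives first moment $\asymp 1$ and second moment $\lesssim \log r$. So no comparison of the conditioned loop-erasure with an unconditioned LERW is needed, only the topological fact that it contains the crossing.
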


Before proving Lemma~\ref{lem:two-point-decompose}, we record a classical bound from potential theory.
\begin{lemma}\label{lem:hittinglb}
Fix $0<a<b<1$. There exists $c=c(a,b)>0$ such that, for every $r\geq2$ and $z\in\mathbb{Z}^{3}$,
\[
\inf_{\Gamma} P_{z}\bigl(S[0,\tau(\partial B(z,br))]\cap \Gamma\neq\varnothing\bigr)\geq \frac{c}{\log r},
\]
where the infimum is over all nearest-neighbour paths $\Gamma\subseteq B(z,br)$ connecting $\partial B(z,ar)$ to $\partial B(z,br)$.
\end{lemma}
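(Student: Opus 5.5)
The statement to prove is Lemma~\ref{lem:hittinglb}, the classical lower bound for the probability that a walk started at $z$ hits a crossing path $\Gamma$ before leaving $B(z,br)$. By translation invariance we may take $z=0$.

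\textbf{Reduction to a capacity bound.} For the walk started at $0$ and killed on exiting $B_{br}$, let $G_{br}$ be its Green's function and $N$ the number of visits to $\Gamma$ before $\tau(\partial B_{br})$. The standard last-exit (or first-entrance) decomposition gives
\[
P_0\bigl(S[0,\tau(\partial B_{br})]\cap\Gamma\neq\varnothing\bigr)=\frac{E_0[N]}{E_0[N\mid N>0]}\geq\frac{\sum_{w\in\Gamma}G_{br}(0,w)}{\sup_{w\in\Gamma}\sum_{v\in\Gamma}G_{br}(w,v)}.
\]
The plan is to apply this second-moment bound not to all of $\Gamma$ but to a well-chosen sub-collection of its points.

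\textbf{Choosing the sub-collection.} Set $a'=(a+b)/2$. Since $\Gamma$ is a nearest-neighbour path from $\partial B_{ar}$ to $\partial B_{br}$, it crosses every shell $\partial B_k$ with $ar\le k\le a'r$. For each such $k$, pick one point $w_k\in\Gamma\cap\partial B_k$, and let $A=\{w_k\}$, so $|A|\asymp r$. Two features of $A$ matter:
\begin{itemize}
\item any two of its points satisfy $\|w_j-w_k\|\geq|j-k|$;
\item all its points lie at distance at least $(b-a)r/2$ from $\partial B_{br}$.
\end{itemize}

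\textbf{Numerator.} Because the points of $A$ are separated from $\partial B_{br}$ and lie at distance $\asymp r$ from $0$, we have $G_{br}(0,w)\asymp r^{-1}$ for $w\in A$, using the standard Green's function asymptotics in $\mathbb{Z}^3$. This gives
\[
\sum_{w\in A}G_{br}(0,w)\gtrsim 1.
\]

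\textbf{Denominator.} Using $G_{br}(w,v)\leq G(w,v)\lesssim(1+\|w-v\|)^{-1}$ together with the separation property of $A$, for each $w_j\in A$
\[
\sum_{v\in A}G_{br}(w_j,v)\lesssim\sum_{i=0}^{r}\frac{1}{1+i}\lesssim\log r.
\]

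Substituting both bounds into the displayed inequality yields the lower bound $c/\log r$, with $c$ depending only on $a$ and $b$. Since $A\subseteq\Gamma$, hitting $A$ implies hitting $\Gamma$, so the bound holds for every admissible $\Gamma$, and hence for the infimum. Small $r$ is absorbed into the constant.

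The only step requiring care is the denominator. The point of passing to $A$ is that the full path $\Gamma$ may be very long and wind around heavily, which would make $\sum_{v\in\Gamma}G(w,v)$ large. The one-point-per-shell selection is what reduces the sum to a harmonic series. Everything else is routine Green's function estimation.
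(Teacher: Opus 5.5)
Your proposal is correct and is essentially the paper's argument. The paper combines the capacity bound $\mathrm{cap}(\Gamma)\gtrsim r/\log r$ for annulus crossings with the Green's function lower bound $G_{B(z,br)}(z,w)\gtrsim 1/r$ on an inner sub-annulus via a last-exit decomposition; your ratio $E_0[N]/E_0[N\mid N>0]$ over the one-point-per-shell set $A$ is the same argument, with the standard proof of that capacity bound written out inline.
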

\begin{proof}
This follows from the standard capacity estimate that every nearest-neighbour crossing of a three-dimensional annulus of radius comparable to $r$ has capacity at least $c r/\log r$, together with the Green-function lower bound $G_{B(z,br)}(z,w)\geq c/r$ for points $w$ in a fixed inner subannulus; see, for example, \cite[Chapter~2]{Law96}. The last-exit decomposition then gives the stated lower bound.
\end{proof}
\begin{proof}[Proof of Lemma~\ref{lem:two-point-decompose}]
Let $H^{0}$ denote the event that $S_{1}$ hits $0$ and $S_{1}[0,\tau_{1}]\cap\gamma=\{0\}$. Let $S'$ be the time-reversal of $S_{1}[0,\tau_{1}]$. Let $\tau'$ be the stopping times corresponding to $S'$. We consider the following events:
\par
($H^{1}$) $S'$ first hits $\partial B_r$ in $B(y,r/100)$.
\par
($H^{2}$) $S'[0,\tau'(r)]$ and $\gamma[0,\tau_{\gamma}(r)]$ are $1/10$-separated at the end.
\par
($H^{3}$) Both $S'[\tau'(r),\tau'(2r)]$ and $\gamma[\tau_{\gamma}(r),\tau_{\gamma}(2r)]$ stay in some well-chosen tubes.
\par
($H^{4}$) $S'[\tau'(r),\tau_{1}]\cap (B_r\backslash B(y,r/50))=\varnothing$ and $S'[\tau'(2r),\tau_{1}]\cap(B_{2r}\backslash B(S'(\tau'(2r)),r/50))=\varnothing$.
\par
By Lemma~\ref{lem:separation}, Harnack principle and standard estimates of simple random walk, we have
\[P_{x,y}(H^{1}\cap H^{2}\cap H^{3}\cap H^{4}|H^{0})\gtrsim 1.\]
On the event $\cap_{i=0}^4H^{i}$, the loop-erasure of $S_{1}$ must contain a crossing between $\partial B(y,r/50)$ and $\partial B(y,r/10)$. Then, by Lemma~\ref{lem:hittinglb}, the probability that $S^{2}$ intersects such crossing before leaving $B(y,r/10)$ is at least $c(\log r)^{-1}$. Finally, noting that $P_{x,y}(H^{0})\asymp R^{-1-\alpha}$, we complete the proof.
\end{proof}
Finally, we provide a hittability estimate of LERW. For a (random) continuous curve $\gamma$, let 
\[H_{x}^{\gamma}(r,s;a):=\left\{\forall y\in B(x,r),\ {P}_{y}\bigl(S[0,\tau(\partial B(x,s))]\cap\gamma=\varnothing\bigr)\leq(r/s)^{a}\right\}.\]
\begin{lemma}[{\cite[Lemma 3.3]{SS18}}]\label{lem:hittability}
Let $\gamma$ be an ILERW started from $x$. Then, for any $M\geq1$, there exist constants $a=a(M)>0$ and $c=c(M)>0$, such that for all $s>r>1$, we have
\[{P}_{x}(H_{x}^{\gamma}(r,s;a))\geq1-c(r/s)^{M}.\]
\end{lemma}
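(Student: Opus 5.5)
Since this is \cite[Lemma 3.3]{SS18}, the plan is to reproduce its multiscale structure: first reduce to a one-scale hittability estimate that holds uniformly in the past of the curve, then iterate it over geometrically growing annuli. Fix a large ratio $L=L(M)\geq 2$ to be chosen later. Set $r_k:=L^k r$ for $k=0,1,\dots,N$ with $N:=\lfloor \log_L(s/r)\rfloor-1$, and let $A_k:=B(x,r_{k+1})\setminus B(x,r_k)$. Call scale $k$ \emph{good} if
\[
P_z\bigl(S[0,\tau(\partial B(x,r_{k+1}))]\cap\gamma=\varnothing\bigr)\leq 1-c_0 \quad\text{for every } z\in\partial B(x,r_k),
\]
where $c_0>0$ is a small absolute constant. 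A walk started at $y\in B(x,r)$ must cross every annulus $A_k$ before leaving $B(x,s)$. By the strong Markov property at the successive hitting times of $\partial B(x,r_k)$, the escape probability is at most $(1-c_0)^{\#\{\text{good }k\}}$. So it suffices to show
\[
P_x\bigl(\#\{\text{good }k\le N\}<N/2\bigr)\le c\,(r/s)^M .
\]
Given this, on the complementary event the escape probability is at most $(1-c_0)^{N/2}\le (r/s)^a$ with $a:=-\log(1-c_0)/(3\log L)$.

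\textbf{One-scale estimate (the main obstacle).} The key step is to show that, conditionally on $\gamma$ up to its first exit from $B(x,r_k)$ (or on any information generated inside $B(x,r_k)$), scale $k$ is bad with probability at most $\varepsilon(L)$, where $\varepsilon(L)\to0$ as $L\to\infty$. The plan is to use the domain Markov property: the rest of $\gamma$ is the loop erasure of a walk conditioned to avoid the already revealed path. Split $A_k$ into $\asymp\log L$ sub-annuli of ratio $2$. In each sub-annulus, with probability bounded below uniformly in the conditioning, $\gamma$ makes a ``nice'' crossing. By a nice crossing I mean that a fixed-size piece of $\gamma$ lies in a tube and has the property that any walk crossing the sub-annulus hits it with probability at least $c_1$. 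The lower bound for this property should come from comparing with the walk whose loop erasure produces $\gamma$: that walk itself is hit with constant probability, and loop erasure keeps a positive fraction of its macroscopic pieces with positive probability (the argument of \cite{SS18}, using Lemma~\ref{lem:separation}-type separation and Harnack estimates). Note that Lemma~\ref{lem:hittinglb} alone is too weak here, since it only gives hitting probability $c/\log r$ rather than a constant. Once $\gamma$ has a positive fraction of nice sub-annuli inside $A_k$, the escape probability across $A_k$ is at most $(1-c_1)^{c\log L}\le 1-c_0$. Since the nice sub-annuli occur with conditionally uniformly positive probability, a binomial bound shows that scale $k$ is bad with probability at most $\varepsilon(L)\le L^{-c}$.

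\textbf{Iteration and large deviations.} Because the one-scale bound holds conditionally on everything revealed inside $B(x,r_k)$, the indicators of bad scales are stochastically dominated by i.i.d.\ Bernoulli$(\varepsilon(L))$ variables. A Chernoff bound then gives
\[
P\bigl(\#\text{bad}\ge N/2\bigr)\le \bigl(4\varepsilon(L)\bigr)^{N/2}\le c\,(r/s)^{M},
\]
provided we choose $L$ large enough that $(4\varepsilon(L))^{1/2}\le L^{-M}$. This determines $L=L(M)$, and hence $a(M)$ and $c(M)$. Small $s/r$ (namely $N<1$) is absorbed into the constant $c(M)$. The one genuinely delicate point is the uniformity of the one-scale estimate under conditioning on the past of the LERW. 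I expect that to require the conditioned-walk description and Harnack-type comparisons, rather than any further use of the earlier results in this paper.
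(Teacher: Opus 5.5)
The paper gives no proof of this lemma; it imports it from \cite[Lemma 3.3]{SS18}, so your sketch has to stand on its own. Your reduction is sound: the strong Markov property across the annuli $A_k$, then domination of the bad-scale indicators by i.i.d.\ Bernoulli variables. One adjustment is needed: replace ``good'' by the stronger event computed from $\gamma[0,\tau_\gamma(r_{k+1})]$ alone, which is harmless because more of $\gamma$ only lowers escape probabilities. The genuine gap is quantitative, and it sits exactly in the part of the lemma that matters, namely that $M$ is arbitrary.

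Your one-scale bound is a binomial count over $\log_2 L$ sub-annuli, each nice with conditional probability at least a \emph{fixed} $p>0$. The best this gives is $\varepsilon(L)\le CL^{-c}$ with $c<\log_2\frac{1}{1-p}$, a constant independent of $L$ and $M$. With $N\approx\log(s/r)/\log L$ blocks you then get
\[
(4\varepsilon(L))^{N/2}\approx (r/s)^{c/2-O(1/\log L)} .
\]
Enlarging $L$ gains per block exactly what it loses in the number of blocks. Hence your requirement $(4\varepsilon(L))^{1/2}\le L^{-M}$ cannot be met once $M>c/2$. Grouping into ratio-$L$ blocks is really just a Chernoff bound over the $\log_2(s/r)$ dyadic scales, whose exponent is capped by $\log_2\frac{1}{1-p}$. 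So, granting everything else, your argument proves the lemma only for $M$ below a fixed threshold. The paper needs arbitrary $M$: Lemma~\ref{lem:first-layer-dominate} uses $M=1000\max\{1,\varepsilon^{-1}\}$.

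What is missing is a one-scale estimate whose failure probability can be made arbitrarily small, at the cost of the hitting constant. Precisely: for every $\delta>0$ there should be $c_1(\delta)>0$ such that, uniformly in the past, the crossing of a dyadic annulus is $c_1(\delta)$-hittable except with conditional probability at most $\delta$. With that, take $(4\delta)^{1/2}\le 2^{-M}$ and run Chernoff over the dyadic scales. This gives failure probability at most $c\,(r/s)^{M}$, with $a(M)$ determined by $c_1(\delta(M))$.

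Repetition in the radial direction cannot produce such a $\delta$; it has to come from repetition at finer scales along the crossing. One natural route at scale $\rho$ is as follows.
\begin{itemize}
\item Take the $\asymp 2^{j}$ points where $\gamma$ first hits the spheres of radii $\rho(1+i2^{-j})$, and put disjoint sub-balls of radius $\asymp 2^{-j}\rho$ around them.
\item Apply the domain Markov property sequentially to show that a positive fraction of these sub-balls contain a fat piece of $\gamma$, except with probability $e^{-c2^{j}}$.
\item Use an energy estimate to show that such a well-separated chain is hit from distance $\rho$ with probability $\gtrsim 1/j$.
\end{itemize}
This gives $\delta=e^{-c2^{j}}$ with $c_1\asymp 1/j$. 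Some input of this kind is the real content of the cited result, and your sketch does not contain it.
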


\subsection{UST/USF, \texorpdfstring{$0$}{0}-WUSF and Wilson's algorithm}\label{subsec:ust-usf}
For a finite connected graph $G=(V,E)$, the \textit{uniform spanning tree} (UST) is the uniform measure on spanning trees of $G$. Let $(K_{n})_{n\geq1}$ be an exhaustion of $\mathbb{Z}^{d}$ by finite connected sets, and let $K_{n}^{*}$ be the finite graph obtained by wiring $K_{n}^{c}$ into a single boundary vertex $\partial_{n}$. The weak limit of the USTs on $K_{n}^{*}$ is the \textit{wired uniform spanning forest} (WUSF), and on $\mathbb{Z}^{d}$ it coincides with the free uniform spanning forest \cite{Pem91,BLPS01}.  When $d\leq4$ this limit is almost surely connected, and we continue to call it the UST on $\mathbb{Z}^{d}$; when $d\geq5$ it is disconnected and we call it the uniform spanning forest  (USF). In all dimensions $d\geq 2$, every component is almost surely one-ended \cite{BLPS01}.

For $d\geq 3$, for a distinguished vertex $v\in\mathbb{Z}^{d}$, let $K_{n}^{*v}$ be the graph obtained from $K_{n}^{*}$ by additionally identifying $v$ with $\partial_{n}$. The weak limit of the USTs on $K_{n}^{*v}$ is the $v$-wired uniform spanning forest, denoted by $\mathcal{U}_{v}$. In particular, $\mathcal{U}_{0}$ will denote the $0$-WUSF. We write $\mathbb{P}_0$ for the corresponding law. We write $\mathfrak{T}$ for the connected component of $0$ in $\mathcal{U}_{0}$ viewed after separating the two roots, and use $d_{0}$ for the graph distance in $\mathfrak{T}$.

For the usual UST/USF, we write $B_{\mathrm{int}}(0,R)$ for the intrinsic ball of radius $R$ around $0$ and $U_{R}$ for the connected component of $0$ in $\mathcal{U}\cap B_R$. For the $0$-WUSF, we write
\[B_{\mathrm{int}}^{0}(0,R):=\{x\in\mathfrak{T}: d_{0}(0,x)\leq R\},\]
and let $U_{R}^{0}$ be the connected component of $0$ in $\mathcal{U}_{0}\cap B_R$.

If $\mathcal{U}$ is a UST/USF on $\mathbb{Z}^d$, $d\geq 2$,  we write $\mathfrak{P}$ for the \emph{past} of the origin, namely the set of vertices whose unique\footnote{Thanks to the a.s.\ one-endedness of (trees of) $\mathcal{U}$.} path to infinity in $\mathcal{U}$ passes through the origin; when $\mathcal{U}$ is connected, this coincides with $\{0\}$ together with all finite component(s) separated from infinity by deleting the origin. 

We will repeatedly use Wilson's algorithm. On a finite graph with a designated root $r$, we enumerate the remaining vertices as $V\setminus\{r\}=\{x_{1},\ldots,x_{m}\}$, set $T_{0}=\{r\}$, and then proceed inductively: if $x_{i}\notin T_{i-1}$, we run a simple random walk $S_{i}$ from $x_{i}$ until its first hit on $T_{i-1}$, and set
\[T_{i}:=T_{i-1}\cup \mathrm{LE}(S_{i}),\]
where $\mathrm{LE}(S_i)$ stands for the chronological loop erasure of $S_i$.
Wilson \cite{Wil96} proved that the resulting tree has the law of UST. Benjamini, Lyons, Peres and Schramm \cite{BLPS01} extended this to transient infinite graphs by rooting at infinity; the same construction, rooted at $\{0,\infty\}$, yields the $0$-WUSF.

It follows from Wilson's algorithm and classical potential theory that for any $x\in\mathbb{Z}^{3}\backslash\{0\}$, 
\begin{equation}\label{eq:one-point-vWUSF}
\mathbb{P}_0(x\in\mathfrak{T})\asymp\|x\|^{-1}.    
\end{equation} 
The next three lemmas are standard comparisons between the intrinsic and extrinsic geometries of the three-dimensional UST, and will be used repeatedly in Sections~\ref{sec:past} and \ref{sec:0wusf}. For simplicity, we write 
\begin{equation}\label{eq:betadef}
\beta:=2-\alpha    
\end{equation}
 for the {\it growth exponent} of 3D LERW.
\begin{lemma}\label{lem:ext-int}
There exist  $c_{1},c_{2}>0$, such that for any $\lambda\geq1$ and $R\geq1$, 
\[\mathbb{P}(U_{R}\nsubseteq B_{\mathrm{int}}(0,\lambda R^{\beta}))\leq c_{1}e^{-c_{2}\lambda}.\]
\end{lemma}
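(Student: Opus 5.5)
The claim is that every point of $U_R$ (the component of $0$ in $\mathcal{U}\cap B_R$) lies within tree distance $\lambda R^{\beta}$ of $0$, except with probability $c_1e^{-c_2\lambda}$. I plan to reduce it to the known exponential tail for the \emph{maximal} intrinsic distance between nearby points of the 3D UST. This is the estimate established in \cite{ACHS21}, building on the LERW length tail bounds of Sapozhnikov--Shiraishi \cite{SS18}: for some $c,c'>0$, all $\lambda\ge 1$ and $R\ge1$,
\[
\mathbb{P}\Bigl(\exists\,x\in B_R:\ d_{\mathcal U}(0,x)\ge \lambda R^{\beta},\ \gamma_{0,x}\subseteq B_{2R}\Bigr)\le c\,e^{-c'\lambda},
\]
where $\gamma_{0,x}$ is the tree path from $0$ to $x$.

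Now take $x\in U_R$. The path $\gamma_{0,x}$ stays inside $B_R$, because $U_R$ is connected in $\mathcal{U}\cap B_R$ and $\mathcal U$ is a tree, so the unique tree path from $0$ to $x$ is the one inside $U_R$. Hence $\{U_R\nsubseteq B_{\mathrm{int}}(0,\lambda R^\beta)\}$ is contained in the event displayed above, and the lemma follows.

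If I instead want a self-contained argument, I would run Wilson's algorithm rooted at infinity, first from $0$ and then from the points of a fine mesh $\delta_k R\mathbb Z^3\cap B_R$ at dyadically decreasing scales $\delta_k=2^{-k}$. The steps, in order, are as follows.

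\textbf{Step 1.} The first branch is the ILERW $\gamma$ from $0$. By \cite{SS18}, the number of steps of $\gamma$ before exiting $B_{2R}$ has exponential upper tail at scale $R^{\beta}$, namely $P(\tau_\gamma(2R)\ge\lambda R^\beta)\le ce^{-c'\lambda}$.

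\textbf{Step 2.} At scale $k$, each new walk started from a mesh point in $B_R$ hits the existing tree within distance $\delta_{k-1}R\cdot\lambda^{\epsilon}$, except with probability stretched-exponentially small in $\lambda$ and $2^k$. This uses the hittability estimate Lemma~\ref{lem:hittability}, with $M$ large, for the already-built LERW pieces. The new branch then has length at most $(\lambda 2^{k/2})(\delta_{k-1}R)^{\beta}$, again by \cite{SS18}.

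\textbf{Step 3.} Sum over scales. There are $\delta_k^{-3}$ mesh points at scale $k$, and the length contributions are bounded by $R^{\beta}\lambda\sum_k 2^{k/2}2^{-k\beta}$, which converges because $\beta>1$ (indeed $\beta\geq 1$ and $2^{-k(\beta-1/2)}$ is summable). Every $x\in U_R$ lies on a path through mesh points at all scales, so $d(0,x)\lesssim\lambda R^\beta$.

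\textbf{Step 4.} Take a union bound of the failure probabilities, $\sum_k 2^{3k}e^{-c\lambda 2^{ck}}\lesssim e^{-c\lambda}$.

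The main obstacle is Step 2 near the boundary of $B_R$. There a branch of $U_R$ might leave $B_R$ and come back, and we must ensure that the path from $0$ to $x$, being contained in $B_R$, is really made of the controlled branches. To handle this, I would carry out the construction inside $B_{2R}$ and use the containment argument above. With that in place, the external estimate of \cite{ACHS21} gives the lemma directly.
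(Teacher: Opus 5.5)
\paragraph{The reduction is circular.} The reduction in your first paragraph is logically fine: for $x\in U_R$ the tree path $\gamma_{0,x}$ does stay in $B_R$. But all of its content sits in the displayed estimate you attribute to \cite{ACHS21}, and that estimate is strictly stronger than the lemma. It lets the tree path wander up to $B_{2R}$ and still asserts a pure exponential tail $e^{-c'\lambda}$. You give no precise reference, and the paper does not obtain it from \cite{ACHS21} either. It cites that work only for the converse comparisons, Lemmas~\ref{lem:int-ext} and~\ref{lem:int-vol}, which come with stretched-exponential tails $e^{-c\lambda^{a}}$. For the present lemma it only says that the proof goes as in \cite[Proposition 3.5]{BM11}, and omits it. As written, your first argument therefore assumes what is to be proved, and your fallback also closes by appealing to the same ``external estimate''.

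\paragraph{Step~2 of the fallback does not follow.} The fallback is the right kind of argument: it is the multiscale Wilson-algorithm scheme one would use to transplant \cite[Proposition 3.5]{BM11} to three dimensions. But Step~2, which is where the whole tail bound comes from, does not follow from the tool you cite.
\begin{itemize}
\item In two dimensions the corresponding step rests on the Beurling estimate, which holds deterministically for every connected set. Only the walk is random, so one gets exponential decay in the distance travelled.
\item In three dimensions the substitute, Lemma~\ref{lem:hittability}, is a property of the random LERW. It fails with probability $c(r/s)^{M}$, which is only polynomially small in the scale ratio.
\item With $s/r=\lambda^{\epsilon}$ and a union bound over the $\asymp 2^{3k}$ rays from the scale-$(k-1)$ mesh, the probability that the environment is not hittable is of order $2^{3k}\lambda^{-\epsilon M}$. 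This is polynomial in $\lambda$, and after summing over $k\le\log_2 R$ it is not even bounded uniformly in $R$. It is not the $e^{-c\lambda 2^{ck}}$ you sum in Step~4.
\item Forcing many mesoscopic crossings, as in Lemma~\ref{lem:first-layer-dominate}, makes the walk's failure small but not the environment's.
\end{itemize}
A bound uniform in $R$ and exponential (or even stretched-exponential) in $\lambda$ needs an extra ingredient, for instance joint or decoupled control of LERW hittability at many disjoint locations. The proposal does not supply it.

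\paragraph{Two smaller repairs.} First, Step~3 counts one branch per scale, yet Wilson branches at scale $k$ attach to earlier branches of the same scale. You should instead bound, for each scale-$k$ mesh point $w$, the tree path from $w$ to the subtree spanned by $0$ and the coarser mesh points. This path is order-independent, and conditionally on that subtree it is the loop-erasure of a walk killed on hitting it. Second, the length tail of \cite{SS18} concerns LERW run to the boundary of a ball. Applying it to such killed loop-erasures, and to their terminal pieces inside $B_R$, requires a separate argument.
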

This lemma can be proved in a similar manner as \cite[Proposition 3.5]{BM11}, and hence we omit the proof.

\begin{lemma}[{\cite[Proposition 6.1]{ACHS21}}]\label{lem:int-ext}
There exist $a,c_{1},c_{2}>0$, such that for any $\lambda>0$ and $R\geq1$, 
\[\mathbb{P}(B_{\mathrm{int}}(0,\lambda^{-1}R^{\beta})\nsubseteq B_R)\leq c_{1}e^{-c_{2}\lambda^{a}}.\]
\end{lemma}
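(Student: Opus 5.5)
The statement is Lemma~\ref{lem:int-ext}, which is cited from \cite[Proposition 6.1]{ACHS21}. I would reproduce the argument of that paper: show that the intrinsic ball $B_{\mathrm{int}}(0,\lambda^{-1}R^{\beta})$ cannot reach $\partial B_R$ without some LERW branch being atypically short.

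\textbf{Step 1: reduce to a crossing event.} If $B_{\mathrm{int}}(0,\lambda^{-1}R^{\beta})\nsubseteq B_R$, then the tree path $\Gamma$ from $0$ to some point $x\in\partial B_R$ has length at most $\lambda^{-1}R^{\beta}$. This path crosses the annulus $B_R\setminus B_{R/2}$. So it suffices to bound the probability that some crossing of this annulus in $\mathcal{U}$ has intrinsic length at most $\lambda^{-1}R^{\beta}$.

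\textbf{Step 2: build the tree by Wilson's algorithm on a mesh.} I would run Wilson's algorithm rooted at infinity. First use the branch from $0$, which is an ILERW. Then run walks from the points of a mesh of spacing $\delta R$ inside $B_R$, with $\delta=\lambda^{-b}$ for a small $b>0$. Call the union of these branches the skeleton.

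\begin{itemize}
\item By Lemma~\ref{lem:hittability}, applied iteratively with a large exponent $M$, with probability at least $1-c\delta^{M'}$ every later walk hits the existing skeleton within distance $\delta^{1/2}R$ of its start.
\item On that event, every tree path between two points of $B_R$ follows skeleton branches, up to pieces of extrinsic size at most $\delta^{1/2}R$.
\item Consequently, the path $\Gamma$ must contain a segment of some skeleton branch that crosses an annulus of width comparable to $R$ at an intermediate scale.
\end{itemize}

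\textbf{Step 3: lower-tail length estimate for a single LERW crossing.} I need: for an LERW, the number of steps it takes to cross from $\partial B(z,r)$ to $\partial B(z,2r)$ is at most $\lambda^{-1}r^{\beta}$ with probability at most $c\,e^{-c\lambda^{a}}$. This is the exponential lower tail for the LERW length established in Sapozhnikov--Shiraishi (and Li--Shiraishi), via the growth exponent $\beta$.

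I would prove it by subdividing the crossing into $\lambda^{a'}$ sub-crossings of scale $r\lambda^{-a'}$. Each sub-crossing has length at least $c(r\lambda^{-a'})^{\beta}$ with probability bounded away from zero. Quasi-independence of these sub-crossings, obtained from domain Markov and separation arguments, then gives a stretched exponential bound.

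\textbf{Step 4: union bound.} The skeleton has polynomially many (in $\lambda$) branches and scales. A union bound over them costs a factor $\lambda^{C}$, which the stretched exponential bound from Step 3 absorbs. Adding the failure probability of the hittability event from Step 2 gives the claimed $c_1e^{-c_2\lambda^{a}}$.

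\textbf{Main obstacle.} I expect the main difficulty to be Step 3. Stretched-exponential concentration of the LERW length from below needs near-independence across scales, and that is delicate in 3D because there is no exact formula for $\beta$. This is exactly the content imported from \cite{ACHS21} and the preceding work of Sapozhnikov--Shiraishi.
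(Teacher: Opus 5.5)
The paper gives no proof here; it imports the lemma from \cite[Proposition 6.1]{ACHS21}. Your architecture (mesh skeleton via Wilson's algorithm, hittability, lower tail of LERW crossing lengths, union bound) is a sensible reconstruction of that kind of argument, but \textbf{as you have parametrized it, it does not give the stated bound}. The error term you carry from Step~2 is $c\delta^{M'}$ with $\delta=\lambda^{-b}$, that is, $c\lambda^{-bM'}$. Lemma~\ref{lem:hittability} only ever gives failure probabilities polynomial in the scale ratio, for a fixed $M$. You cannot let $M$ grow with $\lambda$, because $a(M)$ and $c(M)$ are uncontrolled. So Step~4 yields $C_{M}\lambda^{-M}$ for every $M$, not $c_1e^{-c_2\lambda^{a}}$. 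That weaker bound would actually suffice where the paper uses the lemma (with $\lambda=R^{\varepsilon}$), but it is not what is claimed.

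To repair this, make the mesh stretched-exponentially fine, say $\delta=\exp(-\kappa\lambda^{a})$. Your pigeonhole down to an ``intermediate scale'' $R\delta^{O(1)}$ is then unaffordable, so you need the observation that makes it unnecessary. Write $\gamma_v$ for the ray from $v$ to $\infty$ in $\mathcal{U}$, which is an ILERW from $v$. If $x$ attaches to the skeleton at $p\in\gamma_z$, then the tree path from $0$ to $p$ lies in $\gamma_0\cup\gamma_z$. Hence one of just two ILERW rays crosses an annulus of radius $\asymp R$ in at most $\lambda^{-1}R^{\beta}$ steps. The union bound then costs only $O(\delta^{-3})=O(e^{3\kappa\lambda^{a}})$, which Step~3 absorbs for small $\kappa$. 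Meanwhile the attachment failure $C\delta^{M'}$ becomes stretched-exponentially small.

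Two further steps need more than you wrote.
\begin{itemize}
\item The first bullet of Step~2 does not follow from a single-scale use of Lemma~\ref{lem:hittability}. The bound you get for each individual walk is $\delta^{a/2}$, independent of $R$, and you need it simultaneously for all $\asymp R^{2}$ endpoints $x\in\partial B_R$. Uniformity in $R$ requires successively finer meshes down to unit scale, as in the proof of Lemma~\ref{lem:first-layer-dominate}.
\item Step~3 must cover every crossing of every annulus $A(w,\rho,2\rho)$ by an ILERW started anywhere, not just the first crossing around its starting point. That is what the union bound actually uses.
\end{itemize}
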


\begin{lemma}[{\cite[Proposition 6.1]{ACHS21}}]\label{lem:int-vol}
There exist $a,c_{1},c_{2}>0$, such that for any $\lambda\geq1$ and $R\geq1$, 
\[\mathbb{P}(|B_{\mathrm{int}}(0,R)|\geq\lambda R^{3/\beta})\leq c_{1}e^{-c_{2}\lambda^{a}}.\]
\end{lemma}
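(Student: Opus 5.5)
The plan is to derive this bound directly from Lemma~\ref{lem:int-ext}. The idea is that the intrinsic ball sits inside a Euclidean box whose volume is controlled deterministically, so no second-moment or volume-counting argument for the tree should be needed.

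\textbf{Choice of scales.} Fix $\lambda\geq1$ and $R\geq1$. Define $\mu:=(\lambda/27)^{\beta/3}$ and let $r\geq1$ be given by $r^{\beta}=\mu R$. For the moment assume $\lambda\geq 27$, so that $\mu\geq1$ and $r\geq R^{1/\beta}\geq1$. Applying Lemma~\ref{lem:int-ext} with $R$ replaced by $r$ and $\lambda$ replaced by $\mu$ gives
\[
\mathbb{P}\bigl(B_{\mathrm{int}}(0,\mu^{-1}r^{\beta})\nsubseteq B_{r}\bigr)\leq c_{1}e^{-c_{2}\mu^{a}}.
\]
By the choice of $r$ we have $\mu^{-1}r^{\beta}=R$, so this event is exactly $\{B_{\mathrm{int}}(0,R)\nsubseteq B_r\}$.

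\textbf{Volume on the complementary event.} Off this event,
\[
|B_{\mathrm{int}}(0,R)|\leq|B_r|\leq(2r+1)^{3}\leq 27r^{3}=27(\mu R)^{3/\beta}=\lambda R^{3/\beta},
\]
using $r\geq 1$ and the definition of $\mu$. Consequently
\[
\mathbb{P}\bigl(|B_{\mathrm{int}}(0,R)|\geq\lambda R^{3/\beta}\bigr)\leq c_{1}\exp\bigl(-c_{2}(\lambda/27)^{a\beta/3}\bigr).
\]
This is of the claimed form with a new exponent $a':=a\beta/3>0$ and a new constant $c_2':=c_2 27^{-a'}$. The strict inequality in ``$\geq\lambda R^{3/\beta}$'' is handled by shrinking $\mu$ by a harmless constant factor.

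\textbf{Small $\lambda$.} For $1\leq\lambda<27$ the required bound holds trivially once $c_1$ is enlarged so that $c_1e^{-c_2 27^{a'}}\geq1$.

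I do not expect a genuine obstacle here; the only care needed is bookkeeping the exponents when rescaling.
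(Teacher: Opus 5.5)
Your argument is correct, but it takes a different route from the paper. The paper gives no proof and simply imports the bound from \cite[Proposition 6.1]{ACHS21}. You instead show that Lemma~\ref{lem:int-vol} is a deterministic corollary of Lemma~\ref{lem:int-ext}. Choosing $r^{\beta}=\mu R$ with $\mu\asymp\lambda^{\beta/3}$ turns the containment event $\{B_{\mathrm{int}}(0,R)\subseteq B_{r}\}$ into a volume bound via $|B_r|\le 27r^{3}$. The only cost is replacing the exponent $a$ by $a\beta/3$; the equality case and the range $\lambda<27$ are harmless, as you note.

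What your route buys is economy of input. The volume comparison used in the upper bound of Theorem~\ref{thm:volume} then needs nothing beyond Lemma~\ref{lem:int-ext}. The same two-line reduction also yields Lemma~\ref{lem:int-vol-0} from Lemma~\ref{lem:int-ext-0}, and the paper omits the proofs of both of these.

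What the citation buys is brevity and access to a full two-sided volume estimate. Your containment argument is inherently one-sided. It can never give a lower tail of the form $\mathbb{P}(|B_{\mathrm{int}}(0,R)|\le\lambda^{-1}R^{3/\beta})$, since that requires showing the tree fills a positive fraction of a Euclidean ball within intrinsic distance $R$. That half is not needed here.
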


For later use in the study of the $0$-WUSF, we also record the corresponding analogues for $\mathfrak{T}$. We omit the proofs as they can be proved in a similar manner.
\begin{lemma}\label{lem:ext-int-0}
There exist  $c_{1},c_{2}>0$, such that for any $\lambda\geq1$ and $R\geq1$, 
\[\mathbb{P}_{0}(U_{R}^{0}\nsubseteq B_{\mathrm{int}}^{0}(0,\lambda R^{\beta}))\leq c_{1}e^{-c_{2}\lambda}.\]
\end{lemma}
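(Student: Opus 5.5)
We follow the strategy behind Lemma~\ref{lem:ext-int} (cf.\ \cite[Proposition 3.5]{BM11}), adapted to the $0$-WUSF. We sample $\mathcal{U}_{0}$ with Wilson's algorithm rooted at $\{0,\infty\}$. The first step is the skeleton: take a mesh $\{z_1,\dots,z_N\}$ of $B_R$ with spacing $\delta R$, so $N\asymp\delta^{-3}$, and run the random walks from these points first. This gives a subforest $\mathcal{S}$ made of loop-erased paths, each ending at $0$ or escaping to infinity. We need the branches of $\mathcal{S}$ that lie inside $B_R$ to have length $O(\lambda R^{\beta})$ in total. Before hitting $0$ or earlier branches, the walk from $z_i$ either exits $B_{2R}$ or stays inside. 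In both cases the loop erasure of its portion inside $B_{2R}$ is dominated by a LERW in $B_{2R}$. The standard exponential tail for LERW length,
\[
P\bigl(|\operatorname{LE}(S[0,\tau(2R)])|\geq \lambda R^{\beta}\bigr)\leq c e^{-c\lambda},
\]
holds in 3D (Sapozhnikov--Shiraishi, Li--Shiraishi). It continues to hold when the path is stopped earlier on hitting a set such as $\{0\}\cup(\text{previous branches})$, by monotonicity arguments for LERW length (the domain Markov property and comparison of loop erasures).

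The second step is to fill in the remaining vertices of $B_R$. For each $x\in B_R$ outside the skeleton, the walk from $x$ reaches $\mathcal{S}$ before travelling distance $\delta^{1/2}R$, except with small probability. By Lemma~\ref{lem:hittability}, each skeleton branch is hittable from its neighbourhood, so avoiding it over many dyadic scales has stretched-exponentially small probability. The new branch then has intrinsic length at most $\lambda(\delta^{1/2}R)^{\beta}$ with the same exponential tail. A union bound over $|B_R|\asymp R^{3}$ points is absorbed by a multiscale iteration: we choose $\delta_k=2^{-k}$ meshes and budget intrinsic length $\lambda 2^{-k\beta/4}R^{\beta}$ at level $k$, so that the budgets sum to $O(\lambda R^{\beta})$. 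At each level the failure probabilities are of order $\delta_k^{-3}e^{-c\lambda\delta_k^{-\epsilon}}$, which are summable and bounded by $c_1e^{-c_2\lambda}$.

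The third step handles the special feature of $\mathfrak{T}$. The points of $U^{0}_{R}$ are connected to $0$ through paths in $\mathfrak{T}$, since $U^0_R$ is the component of $0$ in $\mathcal{U}_0\cap B_R$. Each $y\in U_R^0$ lies on a Wilson path that either ends at $0$ or enters an existing branch leading to $0$. Its $d_0$-distance to $0$ is therefore at most the sum of the lengths of the branches traversed inside $B_R$. On the good event, each step of this chain goes down one level of the hierarchy, so the total is at most $\sum_k\lambda 2^{-k\beta/4}R^\beta\lesssim\lambda R^{\beta}$. A path may leave $B_R$ before reaching $0$ in the tree, but then it is not in $U^0_R$ unless it returns. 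To handle this, we compare against the same argument run in $B_{2R}$, using LERW length bounds in $B_{4R}$, exactly as in \cite{BM11}.

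The main obstacle is the first step: the exponential tail for LERW length when the walk is killed at an arbitrary set (here $0$ plus earlier branches). We would first try to deduce it from the unkilled bound $P(|\operatorname{LE}|\geq\lambda R^\beta)\le ce^{-c\lambda}$ via the fact that stopping earlier yields a loop erasure whose length is stochastically controlled by sub-pieces of the full one, using the reversibility of LERW. Compared with the UST case, the only genuinely new ingredient is the extra root at $0$, and it only shortens paths.
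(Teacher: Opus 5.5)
Your overall route is the one the paper intends: it simply defers to the multiscale Wilson's-algorithm argument of \cite[Proposition 3.5]{BM11}. As written, however, the proposal does not yield $c_1e^{-c_2\lambda}$, and the gap sits exactly where three dimensions differ from two. In \cite{BM11} hittability of the skeleton is deterministic (Beurling's estimate), so the only random inputs are LERW length tails. Here the only hittability input is Lemma~\ref{lem:hittability}, whose failure probability $c(r/s)^{M}$ is polynomial in the scale ratio and does not depend on $\lambda$. Your bookkeeping ``$\delta_k^{-3}e^{-c\lambda\delta_k^{-\epsilon}}$'' omits the union bound, over the $\asymp\delta_k^{-3}$ skeleton branches, of the event that this lemma fails. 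Moreover, the cut-off you impose (reaching $\mathcal{S}$ within distance $\delta_k^{1/2}R$) fails with probability of order $e^{-c\delta_k^{-1/2}}$ per point, again with no $\lambda$ in it. These terms cannot be absorbed. Since $U^0_R$ is a subtree of $\mathfrak{T}$ inside $B_R$, we have $d_0(0,y)\le|B_R|$ for $y\in U^0_R$, so the event is empty unless $\lambda\lesssim R^{3-\beta}$. Because $\lambda$ may be as large as $cR^{3-\beta}$, the target $e^{-c_2\lambda}$ can be smaller than any fixed power of $R$, so neither $\lambda$-independent constants nor polynomially small errors are admissible. Note also that avoiding a \emph{single} branch over many dyadic scales only costs $(r/s)^{a}$. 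Exponential decay in the distance travelled comes from meeting a fresh hittable branch in each mesh cell, as in the mesoscopic steps of Lemma~\ref{lem:first-layer-dominate}. To close the gap you must let the travel cut-off grow with $\lambda$. You must also show that non-hittable cells are themselves exponentially rare along any chain of cells a walk can follow. One way is a Peierls-type count based on a version of Lemma~\ref{lem:hittability} that holds conditionally on the current forest for LERWs killed on that forest; the branches here are not ILERWs.

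Several other steps also need repair. The route you suggest for the main obstacle does not work, because loop-erasure length is not monotone under earlier stopping. For $T\le\tau$ the path $\operatorname{LE}(S[0,T])$ is in general not a sub-path of $\operatorname{LE}(S[0,\tau])$, since later loops can erase what the earlier erasure keeps, and reversibility of LERW does not supply such a comparison. The tail for LERW killed on a set has to be proved directly, e.g.\ by a moment (multi-point) estimate for the killed walk. Similarly, ``the loop erasure of its portion inside $B_{2R}$'' is not a meaningful object, because loop erasure does not commute with restriction. What must be controlled is the length of the terminal segments inside $B_R$ of branches that may exit $B_{2R}$ and re-enter. Finally, ``each step of the chain goes down one level'' is true only with the right decomposition. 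You need the order-independence of Wilson's algorithm to say that, given the forest $\mathcal{S}_{k-1}$ built from levels $1,\dots,k-1$, the tree path from each level-$k$ point to $\mathcal{S}_{k-1}$ is a LERW killed on $\mathcal{S}_{k-1}$. If you follow the literal Wilson paths instead, same-level branches can attach to one another in long chains.
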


\begin{lemma}\label{lem:int-ext-0}
There exist $a,c_{1},c_{2}>0$, such that for any $\lambda>0$ and $R\geq1$, 
\[\mathbb{P}_{0}(B_{\mathrm{int}}^{0}(0,\lambda^{-1}R^{\beta})\nsubseteq B_R)\leq c_{1}e^{-c_{2}\lambda^{a}}.\]
\end{lemma}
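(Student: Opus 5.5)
We prove Lemma~\ref{lem:int-ext-0} by following the proof of Lemma~\ref{lem:int-ext} in \cite[Proposition 6.1]{ACHS21}, with the UST replaced by the $0$-WUSF. The statement says that, with stretched-exponentially small failure probability, every vertex of $\mathfrak{T}$ at intrinsic distance at most $\lambda^{-1}R^{\beta}$ from $0$ lies in $B_R$. Equivalently, on the bad event some point $x\in\mathfrak{T}\cap\partial B_R$ has $d_0(0,x)\le\lambda^{-1}R^{\beta}$. So the strategy is to show that any tree path from $0$ out to $\partial B_R$ is long with overwhelming probability. This reduces to a lower-tail estimate on the length of LERW paths, which is the central input in \cite{ACHS21}.

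\emph{Step 1 (skeleton).} We generate the part of $\mathcal{U}_0$ relevant to $B_R$ with Wilson's algorithm rooted at $\{0,\infty\}$. The first branch is run from a point $z$ on $\partial B_{2R}$ (or we take the branch of $\mathfrak{T}$ reaching $\partial B_R$, if there is one). We then add walks from a $\delta R$-mesh of points in $B_R$, exactly as in the ACHS multiscale construction. Any tree path in $\mathfrak{T}$ from $0$ to $\partial B_R$ must cross the annulus $B_R\setminus B_{R/2}$ along LERW pieces, each produced by a walk absorbed either at $0$ or in earlier branches.

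\emph{Step 2 (length of crossings).} Each crossing of an annulus of scale $R$ by such an LERW piece has length at least $\lambda^{-1}R^{\beta}$ except on an event of probability at most $c\,e^{-c\lambda^{a}}$. For the UST this is the lower-tail bound on the length of LERW crossings (Sapozhnikov--Shiraishi type estimates), which \cite{ACHS21} use. We claim the same bound holds here with only constant changes. The key observation is that absorption at $0$ only changes the walk's law through conditioning on hitting or not hitting $0$, and the $h$-transform is bounded by Harnack's inequality away from $0$. Inside $B(0,R/4)$ the path is not needed for the crossing of the annulus, so the conditioning there does not matter.

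\emph{Step 3 (covering and union bound).} Following ACHS, we choose $\delta=\lambda^{-b}$ for a small $b>0$. With the hittability estimate Lemma~\ref{lem:hittability}, we show that with probability at least $1-ce^{-c\lambda^{a}}$ every walk started from a mesh point hits the existing skeleton within distance $\delta^{1/2}R$. Hence every vertex of $\mathfrak{T}\cap B_R$ is connected to the skeleton by a short branch, and any tree path from $0$ to $\partial B_R$ contains a full skeleton crossing of an annulus of scale $\gtrsim R$. There are only polynomially many (in $\lambda$) crossings, so a union bound combined with Step 2 gives the claim.

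The main obstacle is Step 2 near the additional root: branches that attach to $0$ rather than to $\infty$ create new paths from $0$ outward. We handle this by noting that any such branch, from its start point in $B_R^c$ down to $0$, still crosses $B_R\setminus B_{R/2}$ as a loop-erasure of a conditioned walk. The reversibility of LERW for walks conditioned to hit $0$ (cf.\ Lemma~\ref{lem:lerw-utc-rev}) then lets us apply the same length lower-tail estimate to these branches.
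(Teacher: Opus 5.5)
Your overall route, rerunning the argument of \cite[Proposition~6.1]{ACHS21} with Wilson's algorithm rooted at $\{0,\infty\}$, is what the paper intends; the paper itself omits the proof. However, the mechanism in your Steps~2--3 does not close.

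\textbf{The piece-based accounting fails.} You bound annulus crossings by \emph{individual} Wilson pieces. You then assert that every tree path from $0$ to $\partial B_R$ contains a full crossing of a scale-$R$ annulus by one such piece. Your own Step~3 rules this out. If every mesh walk is absorbed within distance $\delta^{1/2}R$, every such piece has diameter $O(\delta^{1/2}R)$. So the branch from $x\in\mathfrak{T}\cap\partial B_R$ to $0$ is typically a concatenation of many microscopic pieces. (Your first walk from $\partial B_{2R}$ is blue with probability $1-O(R^{-1})$. Also, ``taking the branch of $\mathfrak{T}$ reaching $\partial B_R$'' is not a legitimate Wilson step, since it selects a branch by looking at the outcome.) Length bounds for pieces with displacements $d_i$, $\sum_i d_i\gtrsim R$, give at best $\sum_i d_i^{\beta}$, which can be $\ll R^{\beta}$ because $\beta>1$. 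Moreover, pieces absorbed on earlier branches are LERWs in slit domains, which your $h$-transform remark does not cover.

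\textbf{The missing idea.} Work with whole root paths instead of pieces. Run Wilson's algorithm with a mesh point $z$ first. Then, whatever order is actually used, the path $\pi_z$ from $z$ to its root has the law of the loop-erasure of a simple random walk from $z$ stopped upon hitting $0$. So on $\{z\in\mathfrak{T}\}$, $\pi_z$ is $\mathrm{LE}$ of a walk conditioned to hit $0$. If the branch of $x$ merges into $\pi_z$ at a point outside $B_{R/4}$, then $d_0(0,x)$ is at least the length of the final segment of $\pi_z$ after its last visit to $B_{R/4}^{c}$. It then suffices to show that this segment has length at least $\lambda^{-1}R^{\beta}$ outside an event of probability $Ce^{-c\lambda^{a}}$, and to take a union bound over the $\lambda^{O(1)}$ mesh points.

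\textbf{Step 2's justification is also incorrect for exactly this segment.} The conditioning inside $B(0,R/4)$ does matter. The trace of $\mathrm{LE}(S[0,\tau_0])$ in the annulus depends on the walk after it first enters $B(0,R/4)$: the conditioned walk then returns to $\partial B_{R/2}$ with probability bounded below and erases loops there. The correct input is Lawler's reversibility of loop-erasure; Lemma~\ref{lem:lerw-utc-rev} is a non-intersection estimate, not a reversibility statement. The reversal of $\pi_z$ is the loop-erasure of a walk from $0$ conditioned to hit $z$. By the domain Markov property, its initial segment up to leaving $B_{R/4}$ has a law whose Radon--Nikodym derivative with respect to that of ILERW is bounded above and below once $\|z\|\geq R/2$. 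This is why the mesh should be placed in $B_{2R}\setminus B_{R/2}$. The ILERW lower-tail length bound then applies.

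\textbf{Passing from mesh points to all vertices.} The statement ``every mesh walk is absorbed within $\delta^{1/2}R$'' does not give the same for all $\asymp R^{3}$ vertices uniformly in $R$. For that you need the multiscale refinement of the mesh down to scale $1$, as in the construction preceding Lemma~\ref{lem:first-layer-dominate}. In addition, Lemma~\ref{lem:hittability}, which concerns ILERW, must first be transferred to the conditioned paths $\pi_z$.
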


\begin{lemma}\label{lem:int-vol-0}
There exist $a,c_{1},c_{2}>0$, such that for any $\lambda\geq1$ and $R\geq1$,
\[\mathbb{P}_{0}(|B_{\mathrm{int}}^{0}(0,R)|\geq\lambda R^{3/\beta})\leq c_{1}e^{-c_{2}\lambda^{a}}.\]
\end{lemma}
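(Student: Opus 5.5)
We prove the bound for the $0$-WUSF by transferring the argument behind Lemma~\ref{lem:int-vol} (\cite[Proposition 6.1]{ACHS21}) from $\mathcal{U}$ to $\mathfrak{T}$. The approach is to bound $|B^{0}_{\mathrm{int}}(0,R)|$ by the number of vertices in a Euclidean box whose tree path to the root stays short. Set $r:=(\lambda^{b}R)^{1/\beta}$ for a small $b>0$ to be chosen. By Lemma~\ref{lem:int-ext-0}, applied with intrinsic radius $R=\lambda^{-b}r^{\beta}$, we get
\[
\mathbb{P}_{0}\bigl(B^{0}_{\mathrm{int}}(0,R)\nsubseteq B_{r}\bigr)\le c_1e^{-c_2\lambda^{ab}}.
\]
On the complementary event, $|B^{0}_{\mathrm{int}}(0,R)|$ is at most the number $N$ of vertices $x\in\mathfrak{T}\cap B_r$ with $d_0(0,x)\le R$. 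Since $r^{3}=\lambda^{3b/\beta}R^{3/\beta}$, it suffices to show that $\mathbb{P}_0(N\ge \lambda R^{3/\beta})$ is stretched-exponentially small. Because $b$ is small, $N$ must then beat the box volume only by a factor $\lambda^{1-3b/\beta}$ over what is needed.

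To control $N$, we use Wilson's algorithm rooted at $\{0,\infty\}$. For a vertex $x$, the path from $x$ to the root is $\mathrm{LE}$ of a random walk until it hits $0$ or escapes. The condition $d_0(0,x)\le R$ forces this walk to hit the already-built branch toward $0$ within $R$ steps of loop-erased length.

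Our main tool is a moment bound, following the ACHS strategy.
\begin{itemize}
\item First cover $B_r$ by $\asymp \delta^{-3}$ boxes of side $\delta r$.
\item Run Wilson's algorithm from a net of points, one per box. With probability $1-e^{-c\delta^{-c}}$, every vertex of $B_r$ is joined to this skeleton by a branch of extrinsic size at most $\delta^{1/2} r$. This uses Lemma~\ref{lem:hittability} together with the $\log$-capacity hitting bound of Lemma~\ref{lem:hittinglb}.
\item Since every vertex of $\mathfrak{T}\cap B_r$ hangs off the skeleton, we bound $N$ by summing, over skeleton branches that lie in $\mathfrak{T}$ within intrinsic distance $R$ of $0$, the volume of the pieces attached to them.
\end{itemize}
A cleaner alternative uses the exponential moment of $N$ directly. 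By \eqref{eq:one-point-vWUSF}, $\mathbb{E}_0|\mathfrak{T}\cap B_r|\asymp r^{2}$, which already gives a polynomial tail for free. For the higher moments, the $k$-th moment is bounded by the tree-graph inequality for $k$ points joined to $0$ in $\mathfrak{T}$ via Wilson's algorithm, giving roughly $k!\,C^{k}r^{3k}$ up to intrinsic-length constraints. A Chebyshev bound with $k\asymp\lambda^{a'}$ then yields the stretched exponential.

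The main obstacle is the fact that the factor $\lambda$ in the event must defeat the volume $r^{3}$ of the box, not only the $r^{2}$ mean. To achieve this, we iterate, as ACHS do, via a block argument. Split the intrinsic ball into $\lambda^{c}$ subtrees of intrinsic length $R/\lambda^{c}$. Each subtree has volume at most $(R/\lambda^{c})^{3/\beta}$ times a tail variable. Independence across blocks then comes from the spatial Markov property of Wilson's algorithm, and summing the $\lambda^{c}$ blocks gives the stretched-exponential bound.

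Finally, we must check that wiring $0$ changes nothing. Wilson walks that hit $0$ simply terminate, which only shortens paths and can only help all the upper bounds. The one place needing care is the analogue of Lemma~\ref{lem:int-ext-0} near $0$, and that lemma is already granted.
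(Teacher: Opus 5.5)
Your first paragraph already contains a complete proof; you only missed one deterministic observation. Fix $b<\beta/3$, say $b=\beta/6$. On the event $\{B^{0}_{\mathrm{int}}(0,R)\subseteq B_r\}$,
\[
|B^{0}_{\mathrm{int}}(0,R)|\le |B_r|\le (3r)^{3}=27\lambda^{3b/\beta}R^{3/\beta}=27\lambda^{1/2}R^{3/\beta},
\]
and this is strictly less than $\lambda R^{3/\beta}$ once $\lambda>729$. So for such $\lambda$ you have $\{|B^{0}_{\mathrm{int}}(0,R)|\ge\lambda R^{3/\beta}\}\subseteq\{B^{0}_{\mathrm{int}}(0,R)\nsubseteq B_r\}$. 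Your application of Lemma~\ref{lem:int-ext-0} then gives the bound $c_1e^{-c_2\lambda^{a\beta/6}}$, and the range $1\le\lambda\le729$ is absorbed by enlarging $c_1$. The paper gives no separate proof; it only says the $0$-WUSF analogues follow as in the UST case. Reading off the upper volume tail from the intrinsic-to-extrinsic comparison in this way is the standard argument, so with this line your proposal is complete and essentially the intended route.

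Everything after that step should be discarded, because it rests on a misreading. The \emph{main obstacle} you name does not exist. $N\le|B_r|$ holds deterministically, and your own numbers show that the threshold $\lambda R^{3/\beta}$ exceeds $|B_r|$ by a factor of order $\lambda^{1-3b/\beta}$. The event is therefore empty, not merely unlikely.

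The machinery you propose to overcome this obstacle would not work as stated anyway:
\begin{itemize}
\item Wilson's algorithm gives a domain Markov property: conditionally on a partial tree, the rest is a wired UST. It does not give independence between the volumes of different subtrees.
\item An intrinsic ball is not a union of $\lambda^{c}$ pieces of intrinsic size $R/\lambda^{c}$. Your own bookkeeping gives $\lambda^{c}(R/\lambda^{c})^{3/\beta}=\lambda^{-c(3/\beta-1)}R^{3/\beta}\ll R^{3/\beta}$ (since $3/\beta>1$), which contradicts the typical volume.
\item The skeleton estimate $1-e^{-c\delta^{-c}}$ and the claim that wiring $0$ can only help the upper bounds are asserted without proof.
\end{itemize}
None of this is needed once Lemma~\ref{lem:int-ext-0} is granted.
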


\subsection{Abelian sandpile model and burning bijection}\label{subsec:asm}
In this subsection, we briefly review the Abelian sandpile model and its relation to spanning trees and forests through the burning bijection.

Let $K\subseteq\mathbb{Z}^{d}$ be finite. A sandpile configuration on $K$ is a function $\eta:K\to\{0,1,2,\ldots\}$. The configuration is \emph{stable} at $v\in K$ if $\eta(v)<2d$, and it is stable if it is stable at every vertex. If $\eta(v)\geq 2d$, then $v$ is unstable and may be toppled, producing a new configuration $\eta'$ defined by
\[
\eta'(v)=\eta(v)-2d,
\qquad
\eta'(x)=
\begin{cases}
\eta(x)+1, & x\sim v,\\
\eta(x), & \text{otherwise}.
\end{cases}
\]
Thus, one toppling redistributes $2d$ grains from $v$ to its neighbours, and any grains sent outside $K$ are lost. Every unstable configuration stabilizes after finitely many topplings, and Dhar's Abelian property \cite{Dha90} states that the stabilized configuration does not depend on the order in which unstable vertices are toppled.

If one repeatedly adds a grain at a uniformly chosen vertex of $K$ and then stabilizes, one obtains a Markov chain on stable configurations. Its recurrent states are called the \emph{recurrent configurations}, and the stationary measure on recurrent configurations is denoted by $\nu_{K}$. Majumdar and Dhar \cite{MD92} proved that recurrent sandpiles are in bijection with wired spanning trees on $K$ via the burning bijection.

For $\eta\sim \nu_{K}$ and $v,x\in K$, let $N_{K}(v,x;\eta)$ denote the number of times that $x$ topples when one adds a grain at $v$ and stabilizes. We define the \emph{avalanche cluster} and the \emph{total number of topplings} by
\[
\mathrm{AvC}_{K}(v;\eta):=\{x\in K:N_{K}(v,x;\eta)\geq 1\},
\qquad
\mathrm{Av}_{K}(v;\eta):=\sum_{x\in K}N_{K}(v,x;\eta).
\]

In dimensions $d\geq 3$, Jar\'ai and Redig \cite{JR08} constructed the infinite-volume sandpile measure. Let $H$ denote a sample from this measure, and write $\mathbf{P}$ and $\mathbf{E}$ for its law and expectation.

\begin{lemma}\label{lem:asm-inf}
Suppose $d\geq 3$. Then the avalanche caused by adding one grain at any fixed vertex $v$ to $H$ is $\mathbf{P}$-almost surely finite. If $N(v,x)$ denotes the number of topplings at $x$, then
\[
\mathrm{AvC}(v;H):=\{x\in\mathbb{Z}^{d}:N(v,x)\geq 1\}
\]
and
\[
\mathrm{Av}(v;H):=\sum_{x\in\mathbb{Z}^{d}}N(v,x)
\]
are almost surely well defined. We abbreviate
\[
\mathrm{Av}:=\mathrm{Av}(0;H),\qquad \mathrm{AvC}:=\mathrm{AvC}(0;H).
\]
\end{lemma}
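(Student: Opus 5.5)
The plan is to follow the Járai--Redig construction \cite{JR08}, working through the infinite-volume burning bijection and the wave decomposition. Let $\mathbf{P}$ be the weak limit of $\nu_{K_n}$. By \cite{JR08,BHJ17} it is the image, under the generalized burning bijection, of the WUSF on $\mathbb{Z}^d$. The burning bijection needs only finitely many tree edges to determine the height at a vertex, because the components are one-ended (see Section~\ref{subsec:ust-usf}). The aim is to show that, for $\mathbf{P}$-a.e.\ $H$, adding a grain at $v$ and stabilizing in $K_n$ produces toppling numbers $N_{K_n}(v,x;H|_{K_n})$ that become eventually constant in $n$ for each $x$. We also want the total $\sum_x N_{K_n}(v,x)$ to stay bounded. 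The limit then defines $N(v,x)$, and $\mathrm{AvC}$ and $\mathrm{Av}$ are well defined.

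First, we use the expected-toppling identity. In finite volume, $\mathbb{E}_{\nu_K}[N_K(v,x;\eta)]=G_K(v,x)$, the Green function of the walk killed on exiting $K$. This is the standard Dhar formula: stationarity of $\nu_K$ together with the toppling relation $\Delta N=\mathbf{1}_v+\eta-\eta'$, where $\eta'\stackrel{d}{=}\eta$. Since $d\ge3$, $G_K(v,x)\uparrow G(v,x)<\infty$.

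Second, we decompose the avalanche into waves \cite{IKP94}. Each wave is the set of vertices toppling once in a sweep started from $v$. In finite volume, the $k$-th wave corresponds, through the burning bijection, to the component of $v$ in a $v$-wired spanning forest. The key task is to show that the number of waves, which equals $N(v,v)$, and the extent of each wave are tight in $K$ and converge locally as $K\uparrow\mathbb{Z}^d$. The first wave's law converges to that of the $v$-tree of the $v$-WUSF, whose component is a.s.\ finite for $d\ge3$. This follows from $\mathbb{P}_v(x\in\mathfrak{T})\to0$, as in \eqref{eq:one-point-vWUSF}, together with one-endedness. Later waves are handled inductively by applying the same argument to the configuration after the previous wave.

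Third, we pass to the limit. From $\mathbb{E}[N_{K_n}(v,v)]=G_{K_n}(v,v)\le G(v,v)$, the number of waves is tight. Finiteness of each wave in the limit, together with local convergence (the toppling numbers in a fixed box depend on $H$ only through a finite window, with high probability uniformly in $n$), gives a.s.\ convergence of the toppling field along the coupling. Fatou's lemma then gives $\mathbf{E}[\mathrm{Av}]\le\sum_x G(v,x)$. That sum diverges, so the total count is controlled by wave finiteness and the finite number of waves, not by the expectation.

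The main obstacle is the third step: showing that the toppling numbers stabilize, in the sense of a.s.\ local convergence, rather than merely in distribution. This requires the finite-window dependence. I would first try to invoke \cite[Theorem~3.11]{JR08} directly, since it establishes exactly this for $d\ge3$ via the one-endedness of the WUSF components (the $d\ge5$ version is also \cite{Hut20}, and $d=3,4$ are covered in \cite{JR08,BHJ17}). If that is not applicable verbatim, I would reprove it through the wave/$v$-WUSF correspondence sketched above.
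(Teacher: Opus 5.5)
The paper gives no proof of this lemma: it is imported from \cite{JR08}. Your fallback of invoking J\'arai--Redig directly is therefore exactly the paper's route. As a self-contained argument, however, your sketch has a genuine gap in how it treats the later waves.

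The configuration left after the first wave is not $\nu_K$-distributed, so applying ``the same argument'' to it yields nothing. Your claim that the first wave alone has the law of the $v$-tree is also false, even up to $G_K(v,v)$ and restricted to non-trivial sets. Take $v=0$ and $K=\{-2,\dots,2\}\subset\mathbb{Z}$, whose recurrent configurations are those with at most one zero. No recurrent configuration has first wave $\{-1,0,1\}$, since that would force $\eta=(0,1,1,1,0)$. Yet $G_K(0,0)\,\nu_K^{(0)}(\mathfrak{T}_K=\{-1,0,1\})=\tfrac32\cdot\tfrac19>0$. That set occurs only as the second wave of the all-$1$ configuration. The same example shows that \eqref{eq:finite-wave-tree-correspondence} holds only with ``$\le$'': the extension of $\xi=(0,1,1,0)\in R_{K^{\circ}}$ by height $1$ at $0$ is not recurrent.

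The correct input from \cite{IKP94,JR08} treats all waves at once. Over all $\eta\in R_K$, the restrictions to $K^{\circ}$ of the configurations at the start of the successive waves run through $R_{K^{\circ}}$ exactly once, and each wave is the set burnt from $0$. This gives
\[
\sum_{k\ge1}\nu_K\bigl(W_{k,K}\in\mathcal{A}\bigr)=G_K(0,0)\,\nu_K^{(0)}\bigl(\mathfrak{T}_K\in\mathcal{A}\bigr).
\]
A union bound over waves then yields $\nu_K(\mathrm{AvC}_K\not\subseteq B_R)\le G_K(0,0)\,\nu_K^{(0)}(\mathfrak{T}_K\not\subseteq B_R)$, uniformly in $K$.

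You also place the main obstacle in the wrong spot. Almost-sure convergence of the toppling numbers comes for free. For $V\subseteq W$ one has $N_V(0,x;H|_V)\le N_W(0,x;H|_W)$: a legal toppling sequence in $V$ stays legal in $W$, and the abelian property does the rest. Hence $N(0,x)$ is a monotone limit for every $H$.

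What is actually needed is a transfer step, which your sketch blurs by using $\nu_{K_n}$-expectations for $H|_{K_n}$, whose law under $\mathbf{P}$ is not $\nu_{K_n}$. Both $N_V(0,x;\cdot)$ and $\{\exists x\in V\setminus B_R: N_V(0,x;\cdot)\ge1\}$ depend on finitely many heights. So weak convergence $\nu_W\Rightarrow\mathbf{P}$, monotonicity, Dhar's formula, the display above and $\nu_W^{(0)}\Rightarrow\mathbb{P}_0$ together give
\[
\mathbf{E}[N(0,x)]\le G(0,x),\qquad \mathbf{P}(\mathrm{AvC}\not\subseteq B_R)\le G(0,0)\,\mathbb{P}_0(\mathfrak{T}\not\subseteq B_R).
\]

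Finally, the right-hand side tends to $0$ only because $\mathfrak{T}$ is almost surely finite. This does not follow from $\mathbb{P}_0(x\in\mathfrak{T})\to0$, since the expected size of $\mathfrak{T}$ is infinite (cf.\ \eqref{eq:one-point-vWUSF}). Nor does it follow directly from one-endedness of the WUSF. It is a separate input, \cite[Proposition~7.11]{JR08}. In $d=3$ you could instead use Proposition~\ref{prop:extrinsic-WUSF}, whose proof does not involve sandpiles.
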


The decomposition of the avalanche into waves and a generalized burning algorithm is first established by Ivashkevich, Ktitarev and Priezzhev \cite{IKP94}. We will use the description in Jar\'ai and Redig \cite{JR08} for this. Starting from $H+\delta_{0}$, one topples the origin once and then topples every unstable vertex except the origin; the set of sites toppled in this stage is the first wave $W_{1}(H)$. If the origin is still unstable afterwards, one repeats the same procedure to obtain $W_{2}(H)$, and so on, until the origin becomes stable. Writing $N(0,0;H)$ for the number of waves, one has
\[
\mathrm{AvC}=\bigcup_{k=1}^{N(0,0;H)}W_{k}(H).
\]

We shall use the generalized burning construction of \cite[Section~7]{JR08} in a form that is slightly more precise than a bare distributional identification. For a finite set $K\Subset\mathbb{Z}^{d}$ with $0\in K$, put $K^{\circ}:=K\setminus\{0\}$. Let $R_{K}$ denote the recurrent configurations in $K$, and let $\nu_{K}^{(0)}$ be the uniform measure on two-component spanning forests of the wired graph over $K$, with one component rooted at $0$ and the other rooted at the wired boundary. Write $\mathfrak{T}_{K}$ for the vertex set of the component of $0$ under $\nu_{K}^{(0)}$. We use the convention that the first wave $W_{1,K}$ is empty if the addition at $0$ does not make $0$ topple.

\begin{lemma}\label{lem:burning}
Let $K\Subset\mathbb{Z}^{d}$ contain $0$ and all nearest neighbours of $0$. Let $\mathcal{A}$ be an event of subsets of $K$ such that every $A\in\mathcal{A}$ contains at least one nearest neighbour of $0$. Then
\begin{equation}\label{eq:finite-wave-tree-correspondence}
\nu_{K}\bigl(W_{1,K}\in\mathcal{A}\bigr)
= g_{K}\,\nu_{K}^{(0)}\bigl(\mathfrak{T}_{K}\in\mathcal{A}\bigr),
\qquad
 g_{K}:=\frac{|R_{K^{\circ}}|}{|R_{K}|}=G_{K}(0,0),
\end{equation}
where $G_{K}$ is the inverse of the finite-volume toppling matrix.
\end{lemma}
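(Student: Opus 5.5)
The plan is to reduce the identity \eqref{eq:finite-wave-tree-correspondence} to a counting statement and then prove it with a two-phase version of the Majumdar--Dhar burning bijection, as in \cite[Section~7]{JR08}. Since $\nu_K$ is uniform on $R_K$, the left-hand side is $|R_K|^{-1}$ times the number of $\eta\in R_K$ with $W_{1,K}(\eta)\in\mathcal{A}$. The total number of two-component spanning forests of the wired graph over $K$ (one tree rooted at $0$, one at the wired boundary) equals the number of spanning trees of the graph in which $0$ is glued to the sink. By the matrix-tree theorem and the burning bijection for $K^\circ$, this number is $\det\Delta_{K^\circ}=|R_{K^\circ}|$. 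Similarly $|R_K|=\det\Delta_K$, and Cramer's rule gives $G_K(0,0)=\det\Delta_{K^\circ}/\det\Delta_K$. This settles the second equality defining $g_K$. It then suffices to construct a bijection
\[
\Phi:\{\eta\in R_K:\ \eta(0)=2d-1\}\;\longrightarrow\;\{\text{two-component forests}\}
\]
such that the component of $0$ in $\Phi(\eta)$ has vertex set $W_{1,K}(\eta)$ (with $0$ included).

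\emph{Step 1 (trivial waves).} The origin topples after adding a grain at $0$ iff $\eta(0)=2d-1$; otherwise $W_{1,K}=\varnothing$. If $0$ does topple, it sends a grain to each of its neighbours. For any $A\in\mathcal{A}$, $A$ contains a neighbour of $0$, so the event $\{W_{1,K}\in\mathcal{A}\}$ forces $\eta(0)=2d-1$. It also excludes the degenerate case in which the $0$-tree is $\{0\}$ alone. Hence only configurations in the domain of $\Phi$ contribute, and the two sides are compared on the same nontrivial events.

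\emph{Step 2 (two-phase burning).} Given $\eta$ with $\eta(0)=2d-1$, I would first check that $\eta\in R_K$ iff $\eta|_{K^\circ}\in R_{K^\circ}$, using Dhar's burning criterion with $0$ treated as part of the sink. Next I run the burning algorithm on $K^\circ$ in two phases, with a fixed tie-breaking rule on edges as in Majumdar--Dhar.

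1. In phase one, only $0$ is burnt initially. A vertex $x$ is burnt when its height exceeds the number of its unburnt neighbours, and it records an edge to a burnt neighbour chosen by the tie-breaking rule.
2. Phase one stops when no further vertex can be burnt. Phase two then burns from the wired boundary of $K$ in the usual way.

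By Abelianness, toppling $0$ once and then stabilizing everything except $0$ topples each site at most once. A site topples exactly when it has received enough grains from already toppled neighbours, which is precisely the phase-one burning rule. So the set of phase-one burnt vertices coincides with $W_{1,K}(\eta)$. The recorded edges form a forest in which the phase-one vertices hang off $0$ and the rest hang off the boundary. Standard Majumdar--Dhar bookkeeping shows that the height at each vertex can be recovered from the forest, namely from the burning time order and the choice of parent edge. Hence $\Phi$ is injective, and it is surjective onto two-component forests.

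\emph{Main obstacle.} I expect the delicate step to be the identification, in Step 2, of the phase-one burnt set with the toppled set of the first wave, and in particular the verification that no site of $K^\circ$ topples twice within a wave. I would prove this by induction on the toppling order, comparing the number of toppled neighbours with the number of burnt neighbours. I would also need to check that inserting the phase break keeps the tie-breaking rule consistent, so that $\Phi$ remains a bijection rather than a many-to-one map. Once this is in place, summing over forests whose $0$-component lies in $\mathcal{A}$ and dividing by $|R_K|$ yields \eqref{eq:finite-wave-tree-correspondence}.
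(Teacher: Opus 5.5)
\textbf{There is a genuine gap.} The failing step is your claim that, for $\eta(0)=2d-1$, $\eta\in R_K$ if and only if $\eta|_{K^\circ}\in R_{K^\circ}$; with it goes the surjectivity of $\Phi$.

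Only the ``only if'' direction holds: a forbidden subconfiguration of $\eta|_{K^\circ}$ inside $K^\circ$ is also one for $\eta$. The converse fails because a forbidden subconfiguration of $\eta$ may contain $0$ together with all its neighbours. The correct criterion is that $(2d-1,\xi)\in R_K$ iff $\xi\in R_{K^\circ}$ and burning from the wired boundary \emph{alone}, with $0$ unburnt, reaches a neighbour of $0$. This is a condition on burning from the boundary, not from $0$, and it can fail even when the $0$-tree is non-trivial. For example:
\begin{itemize}
\item Take $K=B_N$ with $N\ge2$, and set $\xi(x):=2d-1-\#\{y\sim x:\ y\notin K\}$ on $K^\circ$. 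This is the restriction to $K^\circ$ of the configuration just before the second wave of the all-$(2d-1)$ configuration.
\item Since $\xi(x)=\deg_K(x)-1$, a forbidden subconfiguration $F\subseteq K^\circ$ of $\xi$ would have to contain every $K$-neighbour of each of its points, hence $0$. So $\xi\in R_{K^\circ}$.
\item Its phase-one burnt set contains $B_{N-1}$, so its $0$-tree is non-trivial.
\item Yet $\eta:=(2d-1,\xi)$ satisfies $\eta(x)=\deg_K(x)-1$ for every $x\in K$. Hence $K$ itself is forbidden and $\eta\notin R_K$.
\end{itemize}
So the forest of $\xi$ lies outside the image of $\Phi$, and shrinking the codomain to non-trivial forests does not help.

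\textbf{This cannot be patched, because the displayed identity is false as stated.} Restriction to $K^\circ$ is injective on $\{\eta\in R_K:\eta(0)=2d-1\}$. Together with your correct identification $W_{1,K}(\eta)=\mathfrak{T}_K(\eta|_{K^\circ})$, this gives only
\[
\nu_K\bigl(W_{1,K}\in\mathcal{A}\bigr)\le g_K\,\nu_K^{(0)}\bigl(\mathfrak{T}_K\in\mathcal{A}\bigr).
\]
The inequality is strict for $\mathcal{A}=\{A:\ A\text{ contains a neighbour of }0\}$, by the example above. Concretely, take $d=1$ and $K=\{-2,\dots,2\}$. Then $|R_K|=6$, $|R_{K^\circ}|=9$, and exactly one forest isolates $0$. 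The right-hand side is $\tfrac96\cdot\tfrac89=\tfrac43>1$, while the left-hand side is $\tfrac56$.

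What does hold with equality is the Ivashkevich--Ktitarev--Priezzhev correspondence for \emph{all} waves. Pairs $(\eta,k)$ with $\eta\in R_K$ and $1\le k\le N_K(0,0;\eta)$ correspond bijectively to $R_{K^\circ}$, via the restriction of the configuration just before the $k$-th wave. Hence the $\nu_K$-expected number of waves lying in $\mathcal{A}$ equals $g_K\,\nu_K^{(0)}(\mathfrak{T}_K\in\mathcal{A})$; taking $\mathcal{A}$ to be all subsets gives Dhar's formula. The first-wave version simply drops the later non-trivial waves.

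The paper's own proof rests on the same false extension claim. It asserts that setting the height at $0$ to $2d-1$ on any $\xi\in R_{K^\circ}$ whose $0$-tree contains a neighbour of $0$ yields a recurrent configuration. So it has the same gap.

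What you flagged as the main obstacle (at most one toppling per site within a wave, consistency of tie-breaking) is fine; the real problem is surjectivity. Only the upper-bound inequality is available. Any sandpile lower bound that invokes the first-wave equality needs a different argument.
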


\begin{proof}
The burning construction in \cite[Section~7]{JR08} associates to each $\xi\in R_{K^{\circ}}$ a two-component spanning forest with roots $0$ and the wired boundary; its $0$-component has vertex set $\mathfrak{T}_{K}(\xi)$. Equivalently, $\mathfrak{T}_{K}(\xi)$ is the set burnt in the first phase when $0$ and the boundary are treated as the two roots. If $\mathfrak{T}_{K}(\xi)$ contains a nearest neighbour of $0$, then setting the height at $0$ to its maximal stable value $2d-1$ gives a recurrent configuration on $K$, and the first wave of this configuration is exactly $\mathfrak{T}_{K}(\xi)$. Conversely, every first wave that contains a nearest neighbour of $0$ arises uniquely in this way from the restriction of the configuration to $K^{\circ}$. Therefore
\[
\nu_{K}\bigl(W_{1,K}\in\mathcal{A}\bigr)
=\frac{1}{|R_{K}|}\,\#\{\xi\in R_{K^{\circ}}:\mathfrak{T}_{K}(\xi)\in\mathcal{A}\}
=\frac{|R_{K^{\circ}}|}{|R_{K}|}\,\nu_{K}^{(0)}\bigl(\mathfrak{T}_{K}\in\mathcal{A}\bigr).
\]
The determinant formula for recurrent configurations gives $|R_{K}|=\det\Delta_{K}$ and $|R_{K^{\circ}}|=\det\Delta_{K^{\circ}}$. By Cramer's rule,
$|R_{K^{\circ}}|/|R_{K}|=(\Delta_{K}^{-1})_{00}=G_{K}(0,0)$.
\end{proof}

Combining Lemma~\ref{lem:burning} with the infinite-volume limits in \cite[Sections~6--8]{JR08} yields the tail form of the non-trivial first-wave/$0$-tree correspondence needed below.

\begin{lemma}\label{lem:burning-inf}
Assume $d\ge3$. For every $R\ge1$ and every integer $n\ge2$,
\begin{align}
\mathbf{P}\bigl(\diam_{\mathrm{ext}}(W_{1}(H))\ge R\bigr)
&=G(0,0)\,\mathbb{P}_{0}\bigl(\diam_{\mathrm{ext}}(\mathfrak{T})\ge R\bigr),\label{eq:first-wave-radius-tree}\\
\mathbf{P}\bigl(|W_{1}(H)|\ge n\bigr)
&=G(0,0)\,\mathbb{P}_{0}\bigl(|\mathfrak{T}|\ge n\bigr).\label{eq:first-wave-volume-tree}
\end{align}
More generally, the same identity holds for every continuity event $\mathcal{A}$ of finite vertex sets such that every set in $\mathcal{A}$ contains a nearest neighbour of $0$.
\end{lemma}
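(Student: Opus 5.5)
The plan is to pass to the limit in the finite-volume identity of Lemma~\ref{lem:burning} along an exhaustion $K_n\uparrow\mathbb{Z}^d$, with each $K_n$ containing $0$ and its neighbours. There are three limits to control: the left side $\nu_{K_n}(W_{1,K_n}\in\mathcal{A})$, the constant $g_{K_n}$, and the right side $\nu^{(0)}_{K_n}(\mathfrak{T}_{K_n}\in\mathcal{A})$.

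The constant is the simplest. Since $g_{K_n}=G_{K_n}(0,0)$ is the Green function of simple random walk killed on exiting $K_n$, monotone convergence gives $g_{K_n}\uparrow G(0,0)<\infty$ for $d\ge3$.

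For the right side, recall that $\nu^{(0)}_{K_n}$ is exactly the UST on $K_n^{*0}$ (with $0$ and $\partial_n$ both wired and the roots separated), so it converges weakly to $\mathbb{P}_0$ by the definition of the $0$-WUSF. I will take ``continuity event'' to mean that $\mathbb{P}_0(\mathfrak{T}\in\partial\mathcal{A})=0$ for the local (product) topology on subsets of $\mathbb{Z}^d$, restricted to configurations where $\mathfrak{T}$ is finite. Two facts then need checking.
\begin{itemize}
\item $\mathfrak{T}$ is $\mathbb{P}_0$-a.s.\ finite for $d\ge3$. This follows from Wilson's algorithm rooted at $\{0,\infty\}$ together with transience and one-endedness, or alternatively from \eqref{eq:one-point-vWUSF} combined with the first moment of the extrinsic diameter tail.
\item The finite events in question are continuity events. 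The events $\{\diam_{\mathrm{ext}}(\mathfrak{T})\ge R\}$ and $\{|\mathfrak{T}|\ge n\}$ are determined by $\mathfrak{T}$ in a finite box, except that one also needs tightness. Concretely, $\nu^{(0)}_{K_n}(\mathfrak{T}_{K_n}\not\subseteq B_M)\to \mathbb{P}_0(\mathfrak{T}\not\subseteq B_M)$ should hold uniformly in $n$, and the latter tends to $0$ as $M\to\infty$. The event $\{\mathfrak{T}_{K}\subseteq B_M\}$ is determined by the edges within $B_{M+1}$, because the component of $0$ is enclosed. Hence it is a cylinder event, and the weak convergence applies to it directly.
\end{itemize}
With these two facts, $\nu^{(0)}_{K_n}(\mathfrak{T}_{K_n}\in\mathcal{A})\to\mathbb{P}_0(\mathfrak{T}\in\mathcal{A})$.

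For the left side, I will invoke the Járai--Redig construction: $\nu_{K_n}\to\mathbf{P}$ weakly, and the avalanche in $K_n$ converges a.s.\ to the infinite-volume avalanche (Lemma~\ref{lem:asm-inf}). The point to verify is that $W_{1,K_n}\to W_1(H)$ in a sense strong enough to transfer $\mathcal{A}$. I would argue through the burning description. By Lemma~\ref{lem:burning}, each non-trivial first wave is itself a $0$-tree, so the tightness from the previous paragraph also applies to the first wave: up to the factor $g_{K_n}$, $\nu_{K_n}(W_{1,K_n}\not\subseteq B_M)$ is bounded by $g_{K_n}\nu^{(0)}_{K_n}(\mathfrak{T}_{K_n}\not\subseteq B_M)$, which is uniformly small in $n$. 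On the event $\{W_{1,K_n}\subseteq B_M\}$, the first wave is a local function of the heights in $B_{M+1}$, since the toppling procedure never reaches the boundary. Local weak convergence of the heights then gives convergence of the law of $W_{1,K_n}$ on each box. Combining the two sides yields $\mathbf{P}(W_1\in\mathcal{A})=G(0,0)\,\mathbb{P}_0(\mathfrak{T}\in\mathcal{A})$.

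Finally I will specialise to the two displayed identities. Both events require at least two vertices: $\diam_{\mathrm{ext}}\ge R\ge1$ in one case and $|\,\cdot\,|\ge n\ge2$ in the other. A connected set containing $0$ with at least two vertices contains a nearest neighbour of $0$, so the hypothesis of Lemma~\ref{lem:burning} is satisfied. The main obstacle I expect is the left-side convergence, namely identifying the local limit of the finite-volume first waves with $W_1(H)$ defined through the infinite-volume toppling. This requires checking that the infinite-volume wave procedure agrees with the finite one on the event that the finite wave stays inside $B_M$. For that step I would cite \cite[Sections~6--8]{JR08} rather than reprove it.
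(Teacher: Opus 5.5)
Your proposal is correct and follows essentially the paper's route: you pass to the limit in Lemma~\ref{lem:burning} along an exhaustion using $G_{K_n}(0,0)\uparrow G(0,0)$, the weak convergence $\nu^{(0)}_{K_n}\Rightarrow\mathbb{P}_0$ and the J\'arai--Redig convergence of sandpile measures and waves, then remove the truncation to $B_M$ using finiteness of both objects. Getting tightness of $W_{1,K_n}$ from the burning identity itself is a nice touch, since it also yields finiteness of $W_1(H)$ from that of $\mathfrak{T}$; the one soft spot is the a.s.\ finiteness of $\mathfrak{T}$, which is a genuine theorem rather than an immediate consequence of transience and one-endedness, so cite \cite[Proposition~7.11]{JR08} as the paper does (or, in $d=3$, deduce it from the upper bound of Proposition~\ref{prop:extrinsic-WUSF}, whose proof does not use this lemma).
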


\begin{proof}
Apply Lemma~\ref{lem:burning} with $K=B_{N}$. The finite sandpile measures $\nu_{B_{N}}$ converge to the infinite-volume sandpile measure $\mathbf{P}$, and the wave construction converges because avalanches are almost surely finite in dimensions $d\ge3$; see \cite[Sections~3 and~6]{JR08}. Moreover, \cite[Proposition~7.11]{JR08} gives $\nu_{B_{N}}^{(0)}\Rightarrow\mathbb{P}_{0}$ and finiteness of the limiting $0$-component, while $G_{B_{N}}(0,0)\to G(0,0)$. Passing to the limit in \eqref{eq:finite-wave-tree-correspondence} gives the asserted identity for cylinder continuity events. The events in \eqref{eq:first-wave-radius-tree} and \eqref{eq:first-wave-volume-tree} follow by first intersecting with the event that the relevant finite tree is contained in $B_{M}$ and then letting $M\to\infty$, using the almost-sure finiteness of both $W_{1}(H)$ and $\mathfrak{T}$.
\end{proof}

We will also use Dhar's formula for the expected number of topplings. In finite volume,
\begin{equation}\label{eq:Dhar}
  \int N_{K}(v,x;\eta)\,\nu_{K}(d\eta)=G_{K}(v,x),
\end{equation}
where $G_{K}(v,x)$ is the Green's function of simple random walk killed upon exiting $K$. Passing to the infinite-volume limit in dimensions $d\geq 3$ \cite{JR08}, we obtain
\begin{equation}\label{eq:Dhar-inf}
\mathbf{E}[N(v,x)]=G(v,x),
\end{equation}
where $G$ is the full-space Green's function on $\mathbb{Z}^{d}$.
\section{Estimates for the past of origin}\label{sec:past}
\subsection{Extrinsic estimates}
\begin{lemma}\label{lem:one-point}
For any $x\in\partial B_{R}$, we have
\[\mathbb{P}(x\in\mathfrak{P})\asymp R^{-(1+\alpha)}.\]
\end{lemma}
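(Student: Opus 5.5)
Run Wilson's algorithm rooted at infinity, with the origin as the first starting point and $x$ as the second. The first step produces the ILERW $\gamma=\mathrm{LE}(S^0[0,\infty))$, which is the path from $0$ to infinity in $\mathcal{U}$. Independently, we then run a simple random walk $S$ from $x$ until it first hits $\gamma$. The path from $x$ to infinity in $\mathcal{U}$ is $\mathrm{LE}(S[0,\sigma])$ followed by the portion of $\gamma$ after $S(\sigma)$, where $\sigma$ is the first hitting time of $\gamma$. This path passes through $0$ exactly when the first point of $\gamma$ hit by $S$ is $0$ itself. Hence
\[
\mathbb{P}(x\in\mathfrak{P})=P\bigl(S\text{ hits }\gamma,\ S(\sigma)=0\bigr)=P_x\bigl(S\text{ hits }0,\ S[0,\tau_0]\cap\gamma=\{0\}\bigr),
\]
with $\tau_0$ the first hitting time of $0$. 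The walk is transient, but the event is well defined: on the event, $S$ reaches $\gamma$ at $0$ at time $\tau_0$. We then factor out the hitting probability, $P_x(\tau_0<\infty)=G(x,0)/G(0,0)\asymp R^{-1}$. Conditioning $S$ on $\{\tau_0<\infty\}$ gives exactly the $h$-transformed walk of Lemma~\ref{lem:lerw-utc-rev}, and that lemma yields $P'_x(\gamma\cap S[0,\tau_0]=\{0\})\asymp R^{-\alpha}$. Multiplying, $\mathbb{P}(x\in\mathfrak{P})\asymp R^{-1}\cdot R^{-\alpha}=R^{-(1+\alpha)}$.

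The only substantive input is therefore Lemma~\ref{lem:lerw-utc-rev}, which we are allowed to assume. If it needed justification, the plan would be as follows. Time-reverse the conditioned walk: the reversal of $S[0,\tau_0]$ under $P'_x$ is a simple random walk from $0$ conditioned to hit $x$ and stopped there (up to the loop structure at $0$). Compare this, via the Harnack inequality applied to $y\mapsto P_y(\text{hit } x)$ on $B_{R/2}$, with an unconditioned simple random walk run up to $\tau_{R/2}$. This gives the non-intersection up to scale $R/2$ at probability $\asymp R^{-\alpha}$ by Proposition~\ref{prop:lerw-utc}.

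The remaining piece from scale $R/2$ out to $x$ is handled as follows. For the lower bound, use Lemma~\ref{lem:separation} to separate the two paths at scale $R/2$, then use a tube/gambler's-ruin argument to steer the walk to $x$ while avoiding $\gamma$ at constant cost. Lemma~\ref{lem:hittability} controls the part of $\gamma$ beyond scale $R$. The upper bound is immediate by dropping the constraint after scale $R/2$.

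I expect the main (mild) obstacle to be the reversal/conditioning step. This is the point where the excursions of the conditioned walk near $x$, and near $\partial B_R$, must be shown to contribute only constant factors. It is routine given the separation lemma, however.
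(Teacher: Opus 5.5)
Your proposal is correct and is essentially the paper's proof: Wilson's algorithm run from $0$ and then from $x$ identifies $\{x\in\mathfrak{P}\}$ with the event that the walk from $x$ hits $0$ with $S[0,\tau_0]\cap\gamma=\{0\}$, and this factors as $P_x(\tau_0<\infty)\asymp R^{-1}$ times $R^{-\alpha}$ from Lemma~\ref{lem:lerw-utc-rev}. Like the paper, you take Lemma~\ref{lem:lerw-utc-rev} as given, so your optional sketch of it is not needed; one caveat is that in that sketch Lemma~\ref{lem:hittability} is the wrong tool for the lower bound, because it bounds avoidance probabilities from above rather than preventing $\gamma$ from returning near the walk.
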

\begin{proof}
We consider the Wilson's algorithm started from the origin, and let $\gamma$ be the ILERW that corresponds to the first step of Wilson's algorithm. If $\{x\in\mathfrak{P}\}$ occurs, then obviously we have $x\notin\gamma$. Then we run the second step of Wilson's algorithm started from $x$ and denoted by $S$ the corresponding random walk path. Now, the key observation is that $x\in\mathfrak{P}$ if and only if $S$ is not an infinite path and stops at the origin. Therefore, $\mathbb{P}(x\in\mathfrak{P})$ is equivalent to the probability of the following events: let $\gamma$ be an infinite LERW started from the origin, and $S$ be an independent SRW started from $x$, which satisfies
\par
($E_{1}$)\ $S$ visits $0$;
\par
($E_{2}$)\ Let $\tau$ be the first hitting time of $0$ with respect to $S$, then $S[0,\tau]\cap\gamma=\{0\}$.
By classical potential theory,
\[\mathbb{P}(E_{1})\asymp R^{-1}.\]
By Lemma~\ref{lem:lerw-utc-rev}, we have
\[\mathbb{P}(E_{2}|E_{1})\asymp R^{-\alpha}.\]
Hence we get $\mathbb{P}(E_{1}\cap E_{2})\asymp R^{-1-\alpha}$, which completes the proof.
\end{proof}
Next, we are going to use a multiscale mesh to generate the UST via Wilson's algorithm. We will prove the upper bound of Theorem~\ref{thm:extrinsic} by showing that, with high probability, $\{\diam_{\mathrm{ext}}(\mathfrak{P})\geq R\}$ implies that one can discover some $x\in \mathfrak{P}\backslash B_R$ on the first layer of mesh. For $k\geq1$ set
\[r_{k}:=2^{-3k}R^{1-2\varepsilon},\]
and stop the construction at
\[
K_{R}:=\max\{k\geq1:r_{k}\geq1\}.
\]
For $1\leq k\leq K_{R}$, let $\Lambda_{k}$ be an increasing family of subsets of $\mathbb{Z}^{d}$ such that $\Lambda_{k}\backslash\Lambda_{k-1}\subseteq B(0,(1+2^{-k})R)$, $|\Lambda_{k}|\asymp(R/r_{k})^{3}=2^{9k}R^{6\varepsilon}$ and
\[\bigcup_{x\in\Lambda_{k}}B(x,r_{k})\supseteq B_{(1+2^{-k})R}.\]
We also define a mesoscopic step size
\[s_{k}:=2^{-2k}R^{1-\varepsilon}.\]
We now consider a increasing family of partial trees $\mathcal{T}_{k}$, which is generated by the following procedure. Let $\mathcal{T}_{0}$ be the partial tree obtained by the Wilson's algorithm at the origin, and let $\mathcal{T}_{1},\mathcal{T}_{2},\ldots$ be the partial trees obtained by running Wilson's algorithm at all points (in a deterministic order) in $\Lambda_{1},\Lambda_{2},\ldots,$ and so on. We now assign a colour (red or blue) to every vertex in the partial tree that indicates whether it belongs to the past of the origin. We first colour the origin red and $\mathcal{T}_{0}\backslash\{0\}$ blue. Then, at each step in the Wilson's algorithm, if the LERW dies at a red vertex, then it will be coloured red, otherwise it will be coloured blue. Note that a vertex belongs to the past of the origin if and only if it is red.
\par
For $1\leq k\leq K_{R}$, let $E_{k}$ be the event that there exists a red vertex on $\partial B_{(1-2^{-k})R}\cap \mathcal{T}_{k}$. For $2\leq k\leq K_{R}$, set
\[
F_{k}:=E_{k}\cap(E_{1}\cup\cdots\cup E_{k-1})^{c}.
\]

\begin{lemma}\label{lem:first-layer-dominate}
For large $R$, we have
\[\sum_{2\leq k\leq K_{R}}\mathbb{P}(F_{k})\leq R^{-99}.\]
\end{lemma}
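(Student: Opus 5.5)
Since $K_R=O(\log R)$, it suffices to show $\mathbb{P}(F_k)\le R^{-100}$ for each fixed $2\le k\le K_R$ and large $R$. The first step is a geometric observation. On $F_k$ there is a red vertex $z\in\mathcal{T}_k\cap\partial B_{(1-2^{-k})R}$, and $z$ was added to the tree by some LERW branch started at a point $x\in\Lambda_k\setminus\Lambda_{k-1}$. This branch is red, so it ends at a red vertex of $\mathcal{T}_{k-1}$ or at the origin.

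On $(E_1\cup\dots\cup E_{k-1})^c$, no red vertex of $\mathcal{T}_{k-1}$ lies on $\partial B_{(1-2^{-(k-1)})R}$. Since the past $\mathfrak{P}$ is a connected subtree containing $0$, every red vertex of $\mathcal{T}_{k-1}$ therefore lies inside $B_{(1-2^{-(k-1)})R}$. Consequently the walk generating this branch, after its loop erasure passes through $z$, must travel from $\partial B_{(1-2^{-k})R}$ into $B_{(1-2^{-k+1})R}$, crossing an annulus of width $2^{-k}R$. It must do so without hitting $\mathcal{T}_{k-1}$, and in particular without hitting the blue part.

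The second step uses the density of the mesh at level $k-1$. The points of $\Lambda_{k-1}$ form an $r_{k-1}$-net of $B_{(1+2^{-k+1})R}$, so every ball of radius $r_{k-1}$ in the annulus contains a point of $\Lambda_{k-1}$. Hence $\mathcal{T}_{k-1}$ contains an LERW branch from each such point reaching to distance $\gg r_{k-1}$ (with overwhelming probability its branch exits the ball of radius $s_{k-1}$ before terminating, except possibly when it merges into red; but red vertices lie deep inside). I will make this precise via Lemma~\ref{lem:hittability}. Each such branch, intersected with $B(w,s_{k-1})$, is hittable with exponent $a$, except on an event of probability at most $c(r_{k-1}/s_{k-1})^M=c(2^{-k}R^{-\varepsilon})^M$. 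Taking $M$ large and a union bound over the $\lesssim 2^{9k}R^{6\varepsilon}$ mesh points yields an error of at most $R^{-200}$. The scales satisfy $r_{k-1}\ll s_{k-1}\ll 2^{-k}R$, since $s_{k-1}/(2^{-k}R)=2^{-k+2}R^{-\varepsilon}$; thus this picture applies.

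The third step is the iteration. On the good event, a random walk crossing the annulus of width $2^{-k}R$ passes through roughly $2^{-k}R/s_{k-1}\gtrsim 2^{k}R^{\varepsilon}$ disjoint balls of radius $s_{k-1}$. Each such ball contains, near its center, a point of $\mathcal{T}_{k-1}$ at distance $\le r_{k-1}$ from where the walk enters a smaller ball. By the strong Markov property and the hittability estimate, in each ball the walk avoids $\mathcal{T}_{k-1}$ with probability at most $1-c$; more directly, from every point within $r_{k-1}$ of the tree it escapes to distance $s_{k-1}$ without hitting it with probability $\le (r_{k-1}/s_{k-1})^a$. I will use the coarser form: at each scale-$s_{k-1}$ step, the walk comes within $r_{k-1}$ of the tree with probability bounded below, and then hits it with probability $\ge 1-(r_{k-1}/s_{k-1})^a\ge 1/2$. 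Thus the probability of crossing without hitting is at most $(1-c)^{c2^kR^{\varepsilon}}\le \exp(-cR^{\varepsilon})$. A union bound over the $\lesssim 2^{9k}R^{6\varepsilon}$ starting points of level $k$ gives $\mathbb{P}(F_k)\le R^{-100}$.

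The main obstacle is the second step. It requires guaranteeing that the $\mathcal{T}_{k-1}$ branches near the annulus are genuinely long and hittable, even though Lemma~\ref{lem:hittability} is stated for an ILERW from its starting point, whereas the branches are LERWs stopped upon hitting the earlier tree. I would handle this using the domain Markov property and the fact that the earlier tree together with the new branch forms a connected set containing a path from $w$ to distance $s_{k-1}$. The hittability of the union is then at least that of the ILERW piece up to its hitting time, and this piece can be coupled with an ILERW on $B(w,s_{k-1})$, with Wilson's algorithm rerooted so that the branch from $w$ is generated first.
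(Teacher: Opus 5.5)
Your overall architecture is the paper's: a good event $G$ obtained from Lemma~\ref{lem:hittability} with a union bound over $\Lambda_{k-1}$, followed by $\gtrsim 2^{k}R^{\varepsilon}$ mesoscopic steps of size $s_{k-1}$, each escaped with probability bounded away from $1$. However, your first step contains a false claim, and the union bound in your third step rests on it.

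The walk started at $x\in\Lambda_k\setminus\Lambda_{k-1}$ stops at its first hit of the \emph{current} tree. That tree already contains the stage-$k$ branches of every point of $\Lambda_k\setminus\Lambda_{k-1}$ explored before $x$. Some of these branches may be red and lie in the annulus $B_{(1-2^{-k})R}\setminus B_{(1-2^{-k+1})R}$, even right next to $z$. Choosing $z$ to be the first red vertex on $\partial B_{(1-2^{-k})R}$ does not exclude this. So the branch through $z$ need not end in $\mathcal{T}_{k-1}$. The red connection from $z$ to $\mathcal{T}_{k-1}$ can be a chain of many short stage-$k$ branches, none of whose walks crosses the annulus. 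Consequently the event ``some level-$k$ walk crosses the annulus while avoiding $\mathcal{T}_{k-1}$'' does not contain $F_k$.

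The repair is to use the order-independence of Wilson's algorithm instead of the actual stage-$k$ walks. Conditionally on $\mathcal{T}_{k-1}$, for each \emph{fixed} vertex $z$, the tree path from $z$ to $\mathcal{T}_{k-1}$ is the loop-erasure of a fresh simple random walk from $z$ stopped on hitting $\mathcal{T}_{k-1}$. Moreover, $z\in\mathfrak{P}$ if and only if this walk first hits $\mathcal{T}_{k-1}$ at a red vertex. Since $E_{k-1}^{c}$ and $G$ are $\mathcal{T}_{k-1}$-measurable, your third-step iteration applies to this fresh walk for each $z\in\partial B_{(1-2^{-k})R}$. You then conclude by a union bound over these $O(R^{2})$ vertices $z$, rather than over $\Lambda_k\setminus\Lambda_{k-1}$. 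This is what the paper's walk ``$S_x$ started from $x$'' (the red vertex itself) amounts to.

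The same device also settles your second step, more simply than the coupling you sketch. Contrary to your parenthetical claim, the stopped branch from $w\in\Lambda_{k-1}$ may be very short. But the tree path from $w$ to $\infty$ lies in $\mathcal{T}_{k-1}$, is blue whenever $w$ is blue, and is marginally an ILERW from $w$. Hence Lemma~\ref{lem:hittability} applies to it directly with a union bound, as in the paper.
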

\begin{proof}
On the event $F_{k}$, there exists $x\in\partial B_{(1-2^{-k})R}\cap \mathcal{T}_{k}$ such that $x$ is red, and we will consider the first one if there exist multiple red vertices (note that we perform the exploration process in a deterministic order). Note that there is no red vertex on $\partial B_{(1-2^{-k+1})R}\cap \mathcal{T}_{k-1}$. Then, while running the Wilson's algorithm started from some $y\in\Lambda_{k}\backslash\Lambda_{k-1}$ passing through $x$, the corresponding random walk path has to avoid all pre-existing blue trajectories. We also note that it takes at least distance $2^{-k}R$ for $x$ to connect to some red vertices that have already occurred in $\mathcal{T}_{k-1}$. 
\par
Denoted by $G$ the event that all the paths generated via Wilson's algorithm started from $\Lambda_{k-1}$ satisfies the corresponding event $H$ defined in Lemma~\ref{lem:hittability} in the mesoscopic scale. Applying Lemma~\ref{lem:hittability} and a union bound among all pre-existing paths connecting $x$ and $\infty$ for all blue points $x\in\Lambda_{k-1}$, we have
\[\mathbb{P}(G^{c})\leq 2^{9k}R^{6\varepsilon}(2^{k}R^{\varepsilon})^{-M},\]
where $M$ is a large constant to be determined. On the event $G$, we now consider the probability for the random walk $S_{x}$ started from $x$ to hit $\partial B_{(1-2^{-k+1})R}$ without touching the blue curves. For this, we note that $S_{x}$ passes through at least $2^{k}R^{\varepsilon}$ mesoscopic steps (a step means that $S_{x}$ enters $B(y,r_{k-1})$ and exits $B(y,s_{k-1})$ for some $y\in\Lambda_{k-1}$). Thus, by Lemma~\ref{lem:hittability}, this probability is at most $(2^{k}R^{\varepsilon})^{-a2^{k}R^{\varepsilon}}$. Therefore, we conclude that
\[\mathbb{P}(F_{k})\leq\mathbb{P}(G^{c})+\mathbb{P}(F_{k}\cap G)\leq 2^{9k}R^{6\varepsilon}(2^{k}R^{\varepsilon})^{-M}+(2^{k}R^{\varepsilon})^{-a2^{k}R^{\varepsilon}}.\]
For arbitrary $\varepsilon$, we take $M=1000\max\{1,\varepsilon^{-1}\}$ and the corresponding $a=a(M)$ is then determined via Lemma~\ref{lem:hittability}. We then take a large $R$  such that $\min\{1,\varepsilon\}aR^{\varepsilon}>100$. We then have
\[\mathbb{P}(F_{k})\leq 2^{-k}R^{-99}.\]
Summing over $k$, we complete the proof.
\end{proof}
We can now prove Theorem~\ref{thm:extrinsic}.
\begin{proof}[Proof of Theorem~\ref{thm:extrinsic}]
The lower bound is a direct consequence of Lemma~\ref{lem:one-point}. For the upper bound, we note that $\{\diam_{\mathrm{ext}}(\mathfrak{P})\geq R\}$ will imply $E_{1}\cup\bigcup_{2\leq k\leq K_{R}}F_{k}$. By Lemma~\ref{lem:one-point}, we have
\begin{equation}\label{eq:upper-bound-ext}
\mathbb{P}(E_{1})\leq c(\varepsilon)R^{-1-\alpha+\varepsilon}.
\end{equation}
By Lemma \ref{lem:first-layer-dominate}, we have
\[\mathbb{P}(\diam_{\mathrm{ext}}(\mathfrak{P})\geq R)\leq\mathbb{P}(E_{1})+\sum_{2\leq k\leq K_{R}}\mathbb{P}(F_{k})\leq c(\varepsilon)R^{-1-\alpha+\varepsilon}+R^{-99},\]
which completes the proof.
\end{proof}
\subsection{Intrinsic estimates and volume estimates}
In this subsection, we will prove Theorems~\ref{thm:intrinsic} and \ref{thm:volume}. We will repeatedly use the metric-comparison lemmas from Section~\ref{subsec:ust-usf}, namely Lemmas~\ref{lem:ext-int}, \ref{lem:int-ext}, and \ref{lem:int-vol}. Recall that $\mathcal{U}$ is the UST in $\mathbb{Z}^{d}$ and that $U_{R}$ denotes the connected component of $0$ in $\mathcal{U}\cap B_R$. 
\begin{proof}[Proof of Theorem~\ref{thm:intrinsic}.]
Fix $R$ large and let $\lambda>0$. Consider the events
\[A_{1}:=\{\diam_{\mathrm{int}}(\mathfrak{P})\geq R^{\beta}\},\qquad A_{2}:=\{\diam_{\mathrm{ext}}(\mathfrak{P})\geq \lambda R\}.
\]
For the upper bound, on $A_{1}\cap A_{2}^{c}$ there exists $x\in\mathfrak{P}$ with $d(0,x)\geq R^{\beta}/2$, while $A_{2}^{c}$ implies that $\mathfrak{P}\subseteq B_{\lambda R}$ and hence $\mathfrak{P}\subseteq U_{\lambda R}$. Therefore, by Lemma~\ref{lem:ext-int},
\[\mathbb{P}(A_{1}\cap A_{2}^{c})\leq \mathbb{P}(U_{\lambda R}\nsubseteq B_{\mathrm{int}}(0,R^{\beta}))\leq c_{1}e^{-c_{2}\lambda^{-\beta}}.
\]
Taking $\lambda=R^{-\varepsilon}$ and using \eqref{eq:upper-bound-ext}, we obtain
\[\mathbb{P}(A_{1})\leq \mathbb{P}(A_{2})+c_{1}e^{-c_{2}R^{\beta\varepsilon}}\leq c(\varepsilon)R^{(1-\varepsilon)(-1-\alpha+\varepsilon)},
\]
which gives the desired upper bound after renaming $R^{\beta}$ as the intrinsic scale.

For the lower bound, on $A_{1}^{c}\cap A_{2}$ there exists $x\in\mathfrak{P}\setminus B_{\lambda R}$, while $A_{1}^{c}$ implies that $x\in B_{\mathrm{int}}(0,R^{\beta})$. Hence Lemma~\ref{lem:int-ext} yields
\[\mathbb{P}(A_{1}^{c}\cap A_{2})\leq \mathbb{P}(B_{\mathrm{int}}(0,R^{\beta})\nsubseteq B_{\lambda R})\leq c_{1}e^{-c_{2}\lambda^{a\beta}}.
\]
Taking $\lambda=R^{\varepsilon}$ and using Lemma~\ref{lem:one-point}, we obtain
\[\mathbb{P}(A_{1})\geq \mathbb{P}(A_{2})-c_{1}e^{-c_{2}R^{a\beta\varepsilon}}\geq cR^{-(1+\alpha)(1+\varepsilon)}
\]
for all large $R$, which gives the claimed lower bound after the same change of variable.
\end{proof}
\begin{rmk}\label{rmk:intrinsic}
In fact, we can prove an up-to-constant lower bound of Theorem~\ref{thm:intrinsic} by showing that 
\[\mathbb{P}(d_{\mathcal{U}}(0,x)\geq cR^{\beta}|x\in\mathfrak{P})\geq c'\mbox{ for }x\in\partial B_{R}.\]
This can be done by first finding a segment from the two-sided LERW whose law is comparable to a ``clean'' LERW excursion, and then using \cite{Shi18} to control the intrinsic length of this segment. 
\end{rmk}
We now turn to the proof of Theorem~\ref{thm:volume}. The upper bound follows from Theorem~\ref{thm:extrinsic} and Lemma~\ref{lem:int-vol}. For the lower bound, we require the following lemma. 
\begin{lemma}\label{lem:conditional-moment}
    Suppose that $x\in\partial B_R$. There exist constants $c_{1},c_{2}>0$, such that for all $y,z\in B_{R/4}$, we have
\[\mathbb{P}(y\in\mathfrak{P}|x\in\mathfrak{P})\geq c_{1}(\log R)^{-1}.\]
\end{lemma}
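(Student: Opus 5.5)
We prove the bound by rewriting both events through Wilson's algorithm, rooted at infinity and started in the order $0$, $x$, $y$, and comparing the joint event with the event $E$ of Lemma~\ref{lem:two-point-decompose}.

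\textbf{Step 1: translate the events.} Let $\gamma$ be the infinite LERW from $0$ produced by the first step. Let $S_1$ be the walk from $x$ and $S_2$ the walk from $y$. As in the proof of Lemma~\ref{lem:one-point}:
\begin{itemize}
\item $x\in\mathfrak{P}$ holds exactly when $S_1$ hits $0$ and $S_1[0,\tau_1]\cap\gamma=\{0\}$, where $\tau_1$ is the hitting time of $0$.
\item Given this, $y\in\mathfrak{P}$ holds exactly when $S_2$ first hits the current tree $\gamma\cup\operatorname{LE}(S_1[0,\tau_1])$ at a red vertex. The red vertices are $\{0\}\cup\operatorname{LE}(S_1[0,\tau_1])$.
\end{itemize}
Hence $\{x\in\mathfrak{P}\}\cap\{y\in\mathfrak{P}\}\supseteq E$, where $E$ is the event of Lemma~\ref{lem:two-point-decompose}. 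The walk $S_2$ hitting $0$ first also counts, but we may discard that case.

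\textbf{Step 2: apply the two earlier lemmas.} Choose $r:=\|y\|$, so that $y\in\partial B_r$. For $r$ not too small we have $4r<R$, since $y\in B_{R/4}$; the boundary case $\|y\|$ close to $R/4$ can be handled by slightly shrinking the constant in Lemma~\ref{lem:two-point-decompose}. That lemma gives
\[
\mathbb{P}(x\in\mathfrak{P},\,y\in\mathfrak{P})\geq P_{x,y}(E)\gtrsim (\log r)^{-1}R^{-1-\alpha}\geq (\log R)^{-1}R^{-1-\alpha}.
\]
Dividing by $\mathbb{P}(x\in\mathfrak{P})\asymp R^{-1-\alpha}$ from Lemma~\ref{lem:one-point} yields the claim.

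\textbf{Step 3: small $\|y\|$.} When $\|y\|$ is bounded, say $\|y\|\le r_0$, we argue directly. Conditionally on $E$ at a fixed scale $r_0'>r_0$, the separation lemma (Lemma~\ref{lem:separation}) shows that the reversed path approaches $0$ away from $\gamma$ with conditional probability bounded below. A finite-range argument then shows $S_2$ walks onto $\operatorname{LE}(S_1)$ near $0$ with probability bounded below by a constant depending only on $r_0$. The case $y=0$ is trivial.

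\textbf{Main obstacle.} The key difficulty is already contained in Lemma~\ref{lem:two-point-decompose}: we need the conditional law of the reversed walk, given non-intersection, to pass close to the prescribed $y$. This requires event $H^1$, namely hitting $\partial B_r$ near $y$, to be compatible with the separation event $H^2$, via the separation lemma and the Harnack principle. The remaining issue is bookkeeping: the constants must be uniform over all $y\in B_{R/4}$. The stray variable $z$ in the statement plays no role.
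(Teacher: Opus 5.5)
Your proposal is correct and follows the paper's argument. The paper likewise obtains the two-point bound $\mathbb{P}(x,y\in\mathfrak{P})\gtrsim(\log r)^{-1}R^{-1-\alpha}$ from Lemma~\ref{lem:two-point-decompose}, using the Wilson-algorithm inclusion $E\subseteq\{x,y\in\mathfrak{P}\}$ that you spell out, and divides by the one-point estimate of Lemma~\ref{lem:one-point}; your separate treatment of bounded $\|y\|$ (where $(\log r)^{-1}$ degenerates) and of the edge case $4\|y\|\approx R$ fills in details the paper leaves implicit.
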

\begin{proof}
The claims follow from Lemma~\ref{lem:one-point} and the following two-point estimate:
\[\mathbb{P}(x,y\in\mathfrak{P})\gtrsim (\log r)^{-1}R^{-1-\alpha}\quad \forall r<R,\  x\in\partial B_{r},\ y\in\partial B_{R},
\]
which is a direct consequence of Lemma~\ref{lem:two-point-decompose}.
\end{proof}
\begin{proof}[Proof of Theorem~\ref{thm:volume}.]
We begin with the upper bound and consider the following two events: $G_{1}:=\{|\mathfrak{P}|\geq R^{3/\beta}\}$ and $G_{2}:=\{\diam_{\mathrm{int}}(\mathfrak{P})\geq cR\}$. On $G_{1}\cap G_{2}^{c}$, we see that $B_{\mathrm{int}}(0,cR)\supseteq\mathfrak{P}$, and thus $|B_{\mathrm{int}}(0,cR)|\geq R^{3/\beta}$. By Lemma~\ref{lem:int-vol}, we have
\[\mathbb{P}(G_{1}\cap G_{2}^{c})\leq c_{1}e^{-c_{2}c^{-3a/\beta}}.\]
Fix $\varepsilon>0$, and let $c=R^{-\varepsilon}$. For large $R$, by Theorem~\ref{thm:intrinsic}, we have
\[\mathbb{P}(G_{1})\leq\mathbb{P}(G_{1}\cap G_{2}^{c})+\mathbb{P}(G_{2})\leq c_{1}e^{-c_{2}R^{3a\varepsilon/\beta}}+c'R^{-(1-\varepsilon)\eta+\varepsilon'},\]
which concludes the upper bound.
\par
We now turn to the lower bound. By Lemma~\ref{lem:conditional-moment}, for any $x\in\partial B_R$, we have
\[\mathbb{E}\big[|\mathfrak{P}\cap B_R|\big|x\in\mathfrak{P}\big]\geq c_{1}R^{3}(\log R)^{-1}.\]
Trivially,
\[\mathbb{E}\big[|\mathfrak{P}\cap B_R|^{2}\big|x\in\mathfrak{P}\big]\leq c_{2}R^{6}.\]
Hence, there exist  $c,c'>0$, such that
\[\mathbb{P}\big(|\mathfrak{P}\cap B_R|\geq cR^{3}(\log R)^{-1}\big|x\in\mathfrak{P}\big)\geq c'(\log R)^{-2}.\]
By Lemma~\ref{lem:one-point}, we have $\mathbb{P}(x\in\mathfrak{P})\asymp R^{-1-\alpha}$. Therefore,
\[\mathbb{P}(|\mathfrak{P}|\geq cR^{3}(\log R)^{-1})\geq cR^{-1-\alpha}(\log R)^{-2}.\]
Since logarithmic factors can be absorbed into a factor of $R^{\varepsilon}$, this yields the lower bound in Theorem~\ref{thm:volume} after reparametrizing the threshold.
\end{proof}
\begin{rmk}\label{rmk:volume}
We can remove all polylogarithmic correction terms in the proof of the lower bound above by a stronger version of Lemma~\ref{lem:two-point-decompose}, which removes the factor of $(\log r)^{-1}$ in \eqref{eq:Elb} and in turn provides an up-to-constant lower bound in Theorem~\ref{thm:volume}. However, such an improvement requires highly involved estimates for LERW, which would make this work quite heavy. This prompts us to provide with a slightly weaker version only, which still matches in principal exponent the upper bound we obtain. 
\end{rmk}

\begin{rmk}\label{rmk:discussion2D}
We now briefly discuss how one obtains the 2D exponents summarized in Table \ref{tab:past-exponents}. In this case, the exponent ``$1+\alpha$'' in Lemma~\ref{lem:one-point} will be replaced by $\alpha_{2}:=3/4$ (the two-arm intersection exponent for 2D LERW). The mesh argument follows similarly, which yields the extrinsic exponent. Then, the 2D counterparts of  Lemmas~\ref{lem:ext-int}--\ref{lem:int-vol}, the (stretched) exponential bounds for UST, established in \cite[Section 3]{BM11}, yield intrinsic estimates and volume estimates, where corresponding exponents are $\frac{\alpha_{2}}{2-\alpha_{2}}=3/5$ and $\alpha_{2}/2=3/8$, respectively.
\end{rmk}
\section{Estimates for the $0$-tree}\label{sec:0wusf}
In this section, we give in Propositions \ref{prop:extrinsic-WUSF}--\ref{prop:volume-WUSF} the tail exponents of the $0$-tree in 3D $0$-WUSF. In Section \ref{sec:sandpile}, we will use the extrinsic and volume exponents to deduce exponents for the 3D Abelian sandpile model. 

We use a multiscale mesh to generate the $0$-WUSF, similar to the construction in the previous section. Recall the quantities and objects $r_{k},\Lambda_{k},s_{k}$ and the stopping index $K_{R}$ from the previous section. We define an increasing family of partial forests $\mathcal{F}_{k}$ in a similar fashion: Let $\mathcal{F}_{0}$ be the partial forest with two marked points $0,\infty$. Let $\mathcal{F}_{1},\mathcal{F}_{2},\ldots$ be the partial trees obtained by running Wilson's algorithm at all points in $\Lambda_{1},\Lambda_{2},\ldots$, and so on. Similarly, every vertex is coloured either red or blue. In $\mathcal{F}_{0}$, $0$ is coloured red and $\infty$ is coloured blue. At each step, if the LERW is attached to a red vertex, then it will be coloured red, otherwise blue. Then, a vertex in $B_R$ is red if and only if it belongs to $\mathfrak{T}$.
\par 
For $1\leq k\leq K_{R}$, write $E_{k}$ for the event that there exists a red vertex on $\partial B_{(1-2^{-k})R}\cap \mathcal{F}_{k}$. For $2\leq k\leq K_{R}$, set
\[
G_{k}:=E_{k}\cap(E_{1}\cup\cdots\cup E_{k-1})^{c}.
\]

The following lemma follows from the same line of argument as Lemma~\ref{lem:first-layer-dominate}. We omit the details.
\begin{lemma}\label{lem:first-scale-dominate}
For large $R$, we have
\[\sum_{2\leq k\leq K_{R}}\mathbb{P}_{0}(G_{k})\leq R^{-99}.\]    
\end{lemma}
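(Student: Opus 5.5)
The plan is to follow the proof of Lemma~\ref{lem:first-layer-dominate} step by step, with the UST replaced by the $0$-WUSF and the ILERW hittability input replaced by its analogue for the branches of $\mathcal{F}_{k-1}$.

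Fix $2\le k\le K_R$ and work on $G_k$. Since the exploration is run in a deterministic order, we may pick the first red vertex $x\in\partial B_{(1-2^{-k})R}\cap\mathcal{F}_k$. Because $E_{k-1}$ fails, $\mathcal{F}_{k-1}$ has no red vertex on $\partial B_{(1-2^{-k+1})R}$. The vertex $x$ is red only if the walk in Wilson's algorithm that produced it, started from some $y\in\Lambda_k\setminus\Lambda_{k-1}$, was absorbed at the red cluster before touching any blue vertex of $\mathcal{F}_{k-1}$. Every red vertex of $\mathcal{F}_{k-1}$ lies inside $B_{(1-2^{-k+1})R}$ (after a reduction at the start of the argument), so the walk from $x$ must travel Euclidean distance at least $2^{-k}R$ while avoiding the blue part of $\mathcal{F}_{k-1}$. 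The mesh points of $\Lambda_{k-1}$ have spacing $r_{k-1}$. Hence along this stretch the walk makes at least of order $2^{-k}R/s_{k-1}\asymp 2^{k}R^{\varepsilon}$ disjoint mesoscopic crossings, each entering some $B(z,r_{k-1})$ and exiting $B(z,s_{k-1})$ with $z\in\Lambda_{k-1}$. Each such ball $B(z,r_{k-1})$ contains the point $z$ itself, and $z$ belongs to $\mathcal{F}_{k-1}$.

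Next, define the good event $G$: for every $z\in\Lambda_{k-1}$ whose branch is blue, the branch of $\mathcal{F}_{k-1}$ from $z$ satisfies $H_z(r_{k-1},s_{k-1};a)$ from Lemma~\ref{lem:hittability}. Here is the difference from the UST case. A blue branch from $z$ ends either at $\infty$, in which case it contains a loop-erased walk from $z$ that exits $B(z,s_{k-1})$, or at an earlier blue piece. In both cases the union of blue paths through $z$ contains a path from $z$ to $\partial B(z,s_{k-1})$. I would argue that this path is, up to the point where it first exits $B(z,s_{k-1})$, the loop erasure of a random walk run in Wilson's algorithm. Wilson's algorithm is insensitive to the order of starting points, so one can reroot at $z$. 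The stopping at $0$ only affects a walk that gets absorbed at the red set, and such a branch is red, not blue. I would then invoke the standard finite-scale version of Lemma~\ref{lem:hittability}, in which the hittability of LERW up to its exit from a ball holds uniformly in the domain. A union bound over the $|\Lambda_{k-1}|\lesssim 2^{9k}R^{6\varepsilon}$ points gives
\[
\mathbb{P}_0(G^c)\lesssim 2^{9k}R^{6\varepsilon}(2^kR^{\varepsilon})^{-M}.
\]

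On $G$, use the strong Markov property at each successive mesoscopic crossing. By the definition of $H_z$, each crossing avoids the blue set with probability at most $(r_{k-1}/s_{k-1})^{a}\le (2^kR^{\varepsilon})^{-a}$. Multiplying over the crossings gives
\[
\mathbb{P}_0(G_k\cap G)\le (2^kR^{\varepsilon})^{-a\,c\,2^kR^{\varepsilon}}.
\]
Choosing $M=1000\max\{1,\varepsilon^{-1}\}$ and then $R$ large, as in Lemma~\ref{lem:first-layer-dominate}, yields $\mathbb{P}_0(G_k)\le 2^{-k}R^{-99}$. Summing over $k$ completes the proof.

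The main obstacle is the second step: getting Lemma~\ref{lem:hittability} for blue branches of the $0$-WUSF exploration, since these are not exactly ILERWs from $z$. I would first try to handle this through the domain-uniform local version of the hittability estimate. A second issue is the reduction that places all red vertices of $\mathcal{F}_{k-1}$ inside $B_{(1-2^{-k+1})R}$. Red vertices lying in the shell between the radii $(1-2^{-k+1})R$ and $(1-2^{-k})R$ would break the distance count. I would handle this by noting that a red branch from $\Lambda_{k-1}$ reaching the shell from inside must cross $\partial B_{(1-2^{-k+1})R}$, which contradicts $E_{k-1}^c$. I would handle red branches lying entirely outside by the same mesoscopic avoidance bound applied at level $k-1$.
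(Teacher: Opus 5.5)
\textbf{Overall.} You follow the paper's route. The paper proves this lemma only by pointing to Lemma~\ref{lem:first-layer-dominate}, and your good event $G$, the union bound over $\Lambda_{k-1}$, the product over $\asymp 2^{k}R^{\varepsilon}$ mesoscopic crossings and the choice of $M$ are exactly that argument. However, one step does not hold as you state it.

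\textbf{The gap.} With the deterministic order, the walk that produces the first red vertex $x\in\partial B_{(1-2^{-k})R}$ may be absorbed at a red branch created \emph{earlier in the level-$k$ stage}. Such branches can occupy the shell $B_{(1-2^{-k})R}\setminus B_{(1-2^{-k+1})R}$. Minimality of $x$ only excludes earlier red vertices on the sphere $\partial B_{(1-2^{-k})R}$ itself, and your reduction in the last paragraph only treats red vertices of $\mathcal{F}_{k-1}$. So the inference that ``the walk from $x$ must travel distance at least $2^{-k}R$ while avoiding the blue part of $\mathcal{F}_{k-1}$'' is unjustified. The red connection from $x$ to $B_{(1-2^{-k+1})R}$ can be a concatenation of pieces of many short level-$k$ walks, and your single-walk product bound does not apply to it. Summing crudely over such chains does not rescue this either: each extra link costs a factor $|\Lambda_{k}|$, while a link performing a single crossing gains only $(2^{k}R^{\varepsilon})^{-a}$, with $a=a(M)$ possibly small. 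The write-up of Lemma~\ref{lem:first-layer-dominate} uses the same single-walk reduction, so this step has to be supplied, not just transferred.

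\textbf{The repair.} Use order-independence instead of a first red vertex.
\begin{enumerate}[label=(\roman*)]
\item $\mathcal{F}_{k}$ is the union of the paths in $\mathcal{U}_{0}$ from $\{0\}\cup\Lambda_{1}\cup\cdots\cup\Lambda_{k}$ to the roots, and the colouring is a function of $\mathcal{U}_{0}$. So, conditionally on $\mathcal{F}_{k-1}$, you may start the level-$k$ stage at any prescribed $y\in\Lambda_{k}\setminus\Lambda_{k-1}$.
\item Every red vertex of $\mathcal{F}_{k-1}$ is joined to $0$ by a red path inside $\mathcal{F}_{k-1}$. Hence on $E_{k-1}^{c}$ all of them lie in $B_{(1-2^{-k+1})R}$. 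This also shows that your ``entirely outside'' case cannot occur.
\item On $G_{k}$, some such $y$ has a path through a red $x\in\partial B_{(1-2^{-k})R}$, and by (ii) $x\notin\mathcal{F}_{k-1}$. So $x$ lies on the segment of this path before its first hit of $\mathcal{F}_{k-1}$.
\item That segment is the loop erasure of a \emph{single} walk $S^{y}$ stopped on $\mathcal{F}_{k-1}$. It ends at a red vertex, hence inside $B_{(1-2^{-k+1})R}$.
\item Thus $S^{y}$ itself crosses the shell without touching $\mathcal{F}_{k-1}$. Every crossing centre $z\in\Lambda_{k-1}\setminus B_{(1-2^{-k+1})R}$ is blue, and your strong-Markov product bound applies to $S^{y}$.
\item The union bound over $y$ costs only $|\Lambda_{k}|\lesssim 2^{9k}R^{6\varepsilon}$.
\end{enumerate}

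\textbf{A simplification.} No domain-uniform hittability estimate is needed for the good event. Run $z$ first: the branch from $z$ is blue exactly when the walk from $z$ never hits $0$. In that case the branch \emph{is} the ILERW of that walk, so Lemma~\ref{lem:hittability} applies verbatim.
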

We are now ready to prove the extrinsic estimate of $\mathfrak{T}$.
\begin{prop}\label{prop:extrinsic-WUSF}
There exist $c_{1}>0$, and for any $\varepsilon>0$, $c_{2}=c_{2}(\varepsilon)>0$ such that for any $R>0$,
\[c_{1}R^{-1}\leq\mathbb{P}_{0}(\diam_{\mathrm{ext}}(\mathfrak{T})\geq R)\leq c_{2}R^{-1+\varepsilon}.\]
\end{prop}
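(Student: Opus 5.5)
The proof follows the same structure as that of Theorem~\ref{thm:extrinsic}, with the one-point estimate \eqref{eq:one-point-vWUSF} taking the place of Lemma~\ref{lem:one-point}.

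\emph{Lower bound.} Fix any $x\in\partial B_R$. Since $0\in\mathfrak{T}$, the event $\{x\in\mathfrak{T}\}$ implies $\diam_{\mathrm{ext}}(\mathfrak{T})\geq\|x\|\geq R$. By \eqref{eq:one-point-vWUSF},
\[
\mathbb{P}_0\bigl(\diam_{\mathrm{ext}}(\mathfrak{T})\geq R\bigr)\geq\mathbb{P}_0(x\in\mathfrak{T})\gtrsim R^{-1}.
\]
For completeness, this one-point bound comes from Wilson's algorithm rooted at $\{0,\infty\}$: $x\in\mathfrak{T}$ exactly when the walk from $x$ hits $0$. That probability is $G(x,0)/G(0,0)\asymp\|x\|^{-1}$. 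Unlike in Lemma~\ref{lem:one-point}, there is no non-intersection factor, because before this walk is run the only part of the forest is the pair of roots.

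\emph{Upper bound.} We use the multiscale mesh and the red/blue colouring set up above. First, we show that $\{\diam_{\mathrm{ext}}(\mathfrak{T})\geq R\}$ forces $E_1\cup\bigcup_{2\leq k\leq K_R}G_k$. Suppose $\diam_{\mathrm{ext}}(\mathfrak{T})\geq R$. Then $\mathfrak{T}$ contains a point outside $B_{R/2}$. Since $\mathfrak{T}$ is connected and contains $0$, its path from $0$ to that point crosses each sphere $\partial B_{(1-2^{-k})R}$. The finest mesh covers this crossing at unit scale, so by the last layer $K_R$ the crossing point has been discovered by the partial forest and is red. Hence some $E_k$ occurs, and taking the first such $k$ gives $E_1$ or some $G_k$. (The sphere radii are chosen to be at most $R/2$ up to constants; if needed, replace $R$ by $R/2$, which only changes constants.)

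By Lemma~\ref{lem:first-scale-dominate}, $\sum_{k\geq2}\mathbb{P}_0(G_k)\leq R^{-99}$. It remains to bound $E_1$. A red vertex on $\partial B_{R/2}\cap\mathcal{F}_1$ lies on a LERW branch started from some point $y\in\Lambda_1$ that ended at a red vertex. So some $y\in\Lambda_1$ lies in $\mathfrak{T}$, or at least its branch passes within distance $r_1$ of the sphere. Each $y\in\Lambda_1$ with $\|y\|\gtrsim R$ satisfies $\mathbb{P}_0(y\in\mathfrak{T})\lesssim R^{-1}$ by \eqref{eq:one-point-vWUSF}. Points $y$ closer to the origin are handled by noting that their branch must reach $\partial B_{R/2}$ before being absorbed at $0$. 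Since $|\Lambda_1|\asymp R^{6\varepsilon}$, a union bound gives
\[
\mathbb{P}_0(E_1)\lesssim R^{6\varepsilon}R^{-1}.
\]
Renaming $\varepsilon$ then yields the upper bound $c_2R^{-1+\varepsilon}$.

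\emph{Main obstacle.} The delicate step is controlling $E_1$ when the red vertex on $\partial B_{R/2}$ comes from a walk started at a mesh point $y$ far from the sphere. Such a $y$ may lie near the origin or outside $B_R$, and its walk wanders to distance about $R/2$ and then returns to hit $0$. For $y$ near $0$, I would apply the strong Markov property at the first hitting time of $\partial B_{R/2}$: from there, the probability of hitting $0$ is $\lesssim R^{-1}$. For $y$ outside $B_R$, the walk must hit $0$ at all, which already has probability $\lesssim R^{-1}$. Walks started at earlier mesh points of $\Lambda_1$ only add blue or red obstacles, and colour can only reduce hitting probabilities, so each of the $|\Lambda_1|$ walks contributes at most $O(R^{-1})$. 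This gives the same union bound as above.
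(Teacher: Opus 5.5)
Your overall route is the paper's. The lower bound comes from \eqref{eq:one-point-vWUSF}. The upper bound comes from the inclusion $\{\diam_{\mathrm{ext}}(\mathfrak{T})\ge R\}\subseteq E_1\cup\bigcup_{k\ge2}G_k$, Lemma~\ref{lem:first-scale-dominate}, and a union bound over the $|\Lambda_1|\asymp R^{6\varepsilon}$ coarse mesh points for $E_1$. Two small points are glossed over in the paper as well and are harmless. First, the radii $(1-2^{-k})R$ lie in $[R/2,R)$, not below $R/2$; your rescaling $R\to R/2$ repairs this. Second, since $r_{K_R}\in[1,8)$, the last layer should be taken to be all vertices of the ball, so that the crossing point really lies in $\mathcal{F}_{K_R}$.

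\textbf{The gap: your bound on $E_1$.} The real problem is your justification of $\mathbb{P}_0(E_1)\lesssim R^{-1+6\varepsilon}$ in the last paragraph. In the fixed exploration order, the walk from $y\in\Lambda_1$ is stopped by the whole current partial forest. It becomes red as soon as it hits the red part of that forest, which is a superset of $\{0\}$.
\begin{itemize}
\item Earlier red branches therefore \emph{enlarge} the target, so your claim that colour can only reduce hitting probabilities is false.
\item Consequently, neither ``from $\partial B_{R/2}$ the walk hits $0$ with probability $\lesssim R^{-1}$'' nor ``for $y$ outside $B_R$ the walk must hit $0$'' describes the event that the walk is red.
\item Conditionally on the earlier walks, the per-walk bound genuinely fails. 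Suppose an earlier red branch has diameter of order $R$ inside $B_{R/2}$. Then a walk arriving at $\partial B_{R/2}$ hits it with probability far larger than $R^{-1}$.
\end{itemize}

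\textbf{The fix.} Pass to the final forest and use that the law of the output of Wilson's algorithm does not depend on the ordering. A red vertex $w\in\partial B_{R/2}\cap\mathcal{F}_1$ lies on the branch started at some $y\in\Lambda_1$. Hence $y\in\mathfrak{T}$, and the path from $y$ to $0$ in $\mathfrak{T}$ passes through $w$. For fixed $y$ this is an event about $\mathcal{U}_0$ alone. You may therefore compute its probability by running the walk from $y$ first, killed only at $\{0,\infty\}$:
\begin{itemize}
\item If $y\notin B_{R/2}$, the probability is at most $P_y(S\text{ hits }0)\lesssim R^{-1}$.
\item If $y\in B_{R/2}$, the loop-erasure can only reach $\partial B_{R/2}$ if the walk does so before hitting $0$. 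So the probability is at most $P_y(S\text{ hits }\partial B_{R/2}\text{ and afterwards hits }0)\le\max_{z\in\partial B_{R/2}}P_z(S\text{ hits }0)\lesssim R^{-1}$.
\end{itemize}
Summing over $y\in\Lambda_1$ gives the $R^{-1+6\varepsilon}$ bound. This is exactly the union bound of one-point estimates that the paper's proof relies on.
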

\begin{proof}
The lower bound is a direct consequence of \eqref{eq:one-point-vWUSF}. For the upper bound, note that $\{\diam_{\mathrm{ext}}(\mathfrak{T})\geq R\}$ implies $E_{1}\cup\bigcup_{2\leq k\leq K_{R}}G_{k}$. By Lemma~\ref{lem:first-scale-dominate} and \eqref{eq:one-point-vWUSF}, we have
\[\mathbb{P}_{0}(\diam_{\mathrm{ext}}(\mathfrak{T})\geq R)\leq\mathbb{P}_{0}(E_{1})+R^{-99}\leq\sum_{x\in B_R^{c}\cap\Lambda_{1}}\|x\|^{-1}+R^{-99}\leq c(\varepsilon)R^{-1+6\varepsilon},\]
which completes the proof by the arbitrariness of $\varepsilon$.
\end{proof}
Similar to Theorems~\ref{thm:intrinsic} and \ref{thm:volume}, we can show the following intrinsic and volume estimate of $\mathfrak{T}$. Recall that $\beta=2-\alpha$ defined in \eqref{eq:betadef} stands for the growth exponent of 3D LERW.
\begin{prop}\label{prop:intrinsic-WUSF}
For any $\varepsilon>0$, there exist $c_1=c_1(\varepsilon),c_{2}=c_{2}(\varepsilon)>0$, such that for any $R>0$,
\[c_{1}R^{-1/\beta-\varepsilon}\leq\mathbb{P}_{0}(\diam_{\mathrm{int}}(\mathfrak{T})\geq R)\leq c_{2}R^{-1/\beta+\varepsilon}.\]
\end{prop}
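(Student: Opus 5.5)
We will mirror the proof of Theorem~\ref{thm:intrinsic}, with Proposition~\ref{prop:extrinsic-WUSF} in place of Theorem~\ref{thm:extrinsic} and \eqref{eq:one-point-vWUSF} in place of Lemma~\ref{lem:one-point}. The metric comparisons will be Lemmas~\ref{lem:ext-int-0} and \ref{lem:int-ext-0} instead of Lemmas~\ref{lem:ext-int} and \ref{lem:int-ext}. Fix $R$ large, set the intrinsic scale to $R^{\beta}$, and for $\lambda>0$ consider
\[
A_{1}:=\{\diam_{\mathrm{int}}(\mathfrak{T})\geq R^{\beta}\},\qquad A_{2}:=\{\diam_{\mathrm{ext}}(\mathfrak{T})\geq \lambda R\}.
\]
At the end we substitute $R=\rho^{1/\beta}$, so that a bound of the form $R^{-1\pm O(\varepsilon)}$ becomes $\rho^{-1/\beta\pm O(\varepsilon)}$.

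\textbf{Upper bound.} On $A_{1}$ there is a vertex $x\in\mathfrak{T}$ with $d_{0}(0,x)\geq R^{\beta}/2$. On $A_{2}^{c}$ we have $\mathfrak{T}\subseteq B_{\lambda R}$, and since $\mathfrak{T}$ is connected and contains $0$, this gives $\mathfrak{T}\subseteq U^{0}_{\lambda R}$. Apply Lemma~\ref{lem:ext-int-0} at radius $\lambda R$ with multiplier $\lambda^{-\beta}/2$, where $\lambda=R^{-\varepsilon}$. This shows
\[
\mathbb{P}_{0}(A_{1}\cap A_{2}^{c})\leq c_{1}e^{-c_{2}R^{\beta\varepsilon}/2}.
\]
Proposition~\ref{prop:extrinsic-WUSF} gives $\mathbb{P}_{0}(A_{2})\leq c(\varepsilon)(R^{1-\varepsilon})^{-1+\varepsilon}$. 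Adding the two bounds yields $\mathbb{P}_{0}(A_{1})\lesssim R^{-1+2\varepsilon}$, and the change of variables finishes the upper bound.

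\textbf{Lower bound.} Take $\lambda=R^{\varepsilon}$. On $A_{1}^{c}\cap A_{2}$, the set $\mathfrak{T}$ contains a point $x\notin B_{\lambda R/2}$ with $d_{0}(0,x)\leq R^{\beta}$. In other words, the intrinsic ball $B^{0}_{\mathrm{int}}(0,R^{\beta})$ is not contained in $B_{\lambda R/2}$. Lemma~\ref{lem:int-ext-0} at radius $\lambda R/2$ bounds the probability of this event by $c_{1}e^{-c_{2}R^{a\beta\varepsilon}}$. Hence
\[
\mathbb{P}_{0}(A_{1})\geq\mathbb{P}_{0}(A_{2})-c_{1}e^{-c_{2}R^{a\beta\varepsilon}}.
\]
For $\mathbb{P}_{0}(A_{2})$, pick any $x\in\partial B_{\lambda R}$; by \eqref{eq:one-point-vWUSF}, $\mathbb{P}_{0}(A_{2})\geq\mathbb{P}_{0}(x\in\mathfrak{T})\gtrsim R^{-1-\varepsilon}$. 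The stretched-exponential error is negligible for large $R$, so $\mathbb{P}_{0}(A_{1})\gtrsim R^{-1-\varepsilon}$. Small $R$ is handled by adjusting the constants.

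\textbf{Main obstacle.} The only substantive input is the validity of Lemmas~\ref{lem:ext-int-0} and \ref{lem:int-ext-0}, the $0$-WUSF analogues of the metric comparisons; they were stated without proof, and we take them as given. Everything else is the bookkeeping of the $\varepsilon$-losses under the reparametrization $\rho=R^{\beta}$, exactly as in the proof of Theorem~\ref{thm:intrinsic}.
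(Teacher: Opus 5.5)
Your proposal is correct and follows the paper's route. The paper proves this proposition by rerunning the proof of Theorem~\ref{thm:intrinsic} with Proposition~\ref{prop:extrinsic-WUSF}, \eqref{eq:one-point-vWUSF} and Lemmas~\ref{lem:ext-int-0}--\ref{lem:int-ext-0} in place of their UST counterparts, which is exactly what you do, with slightly more careful factor-of-two bookkeeping.

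One caveat concerns the statement rather than your argument. The lower bound cannot hold as $R\to0$, since the probability is at most $1$ while $c_{1}R^{-1/\beta-\varepsilon}\to\infty$. So your ``small $R$ handled by adjusting constants'' only works for $1\le R\le R_0$, and the proposition should really be read for $R\ge1$, as in Theorem~\ref{thm:intrinsic}.
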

\begin{prop}\label{prop:volume-WUSF}
For any $\varepsilon>0$, there exist $c_{1}=c_{1}(\varepsilon)>0$,\ $c_{2}=c_{2}(\varepsilon)>0$, such that for any $n>0$, 
\[c_{1}n^{-1/3-\varepsilon}\leq\mathbb{P}_{0}(|\mathfrak{T}|\geq n)\leq c_{2}n^{-1/3+\varepsilon}.\]
\end{prop}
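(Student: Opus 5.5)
We follow the proof of Theorem~\ref{thm:volume}, with Lemma~\ref{lem:one-point} replaced by \eqref{eq:one-point-vWUSF}, Theorem~\ref{thm:extrinsic} by Proposition~\ref{prop:extrinsic-WUSF}, and Lemmas~\ref{lem:ext-int}--\ref{lem:int-vol} by Lemmas~\ref{lem:ext-int-0}--\ref{lem:int-vol-0}. Throughout we parametrize $n=R^{3}$; this matches the exponents, since extrinsic scale $R$ corresponds to intrinsic scale $R^{\beta}$ and volume $R^{\beta\cdot 3/\beta}=R^3$, so that $R^{-1}=n^{-1/3}$.

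\emph{Upper bound.} Let $G_1:=\{|\mathfrak{T}|\ge R^{3}\}$ and $G_2:=\{\diam_{\mathrm{ext}}(\mathfrak{T})\ge R^{1-\varepsilon}\}$.
\begin{itemize}
\item By Proposition~\ref{prop:extrinsic-WUSF}, $\mathbb{P}_0(G_2)\le c(\varepsilon)R^{-(1-\varepsilon)(1-\varepsilon)}$.
\item On $G_2^c$ we have $\mathfrak{T}\subseteq U^0_{R^{1-\varepsilon}}$. Lemma~\ref{lem:ext-int-0} with $\lambda=R^{\varepsilon\beta/2}$ shows that, outside probability $c_1e^{-c_2R^{\varepsilon\beta/2}}$, this set lies in $B^0_{\mathrm{int}}(0,R^{\beta(1-\varepsilon/2)})$.
\item Lemma~\ref{lem:int-vol-0} with radius $\rho=R^{\beta(1-\varepsilon/2)}$ bounds the volume of that intrinsic ball: it satisfies $\rho^{3/\beta}=R^{3(1-\varepsilon/2)}$, so $|B^0_{\mathrm{int}}(0,\rho)|\ge R^3$ forces $\lambda=R^{3\varepsilon/2}$, which has probability at most $c_1e^{-c_2R^{3a\varepsilon/2}}$.
\end{itemize}
Summing the three contributions gives $\mathbb{P}_0(|\mathfrak{T}|\ge R^3)\le c(\varepsilon)R^{-1+3\varepsilon}$. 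Rewriting in terms of $n=R^3$ gives the upper bound.

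\emph{Lower bound.} We use a conditional second-moment argument. The key input is a two-point estimate for $\mathfrak{T}$: for $x\in\partial B_R$ and $y\in B_{R/4}$ with $\|y\|\asymp R$ (say $\|y\|\in[R/8,R/4]$),
\[
\mathbb{P}_0(x,y\in\mathfrak{T})\gtrsim(\log R)^{-1}R^{-1}.
\]
We prove this via Wilson's algorithm rooted at $\{0,\infty\}$, run first from $x$ and then from $y$.
\begin{itemize}
\item $x\in\mathfrak{T}$ if and only if the walk from $x$ hits $0$; this has probability $\asymp R^{-1}$.
\item Conditioned on this, the walk is an $h$-transform toward $0$, so by Harnack-type estimates its loop erasure crosses the annulus around $y$ (from $\partial B(y,R/50)$ to $\partial B(y,R/10)$) with conditional probability bounded below. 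This is the analogue of events $H^1$ and $H^4$ in Lemma~\ref{lem:two-point-decompose}, but simpler, because no non-intersection with an ILERW is needed.
\item The walk from $y$ then hits this crossing before leaving $B(y,R/10)$ with probability $\ge c/\log R$ by Lemma~\ref{lem:hittinglb}. Once it hits a red vertex, it is absorbed there, so $y\in\mathfrak{T}$.
\end{itemize}
Hence $\mathbb{E}_0[|\mathfrak{T}\cap B_R|\mid x\in\mathfrak{T}]\gtrsim R^3(\log R)^{-1}$. Since trivially $\mathbb{E}_0[|\mathfrak{T}\cap B_R|^2\mid x\in\mathfrak{T}]\lesssim R^6$, Paley--Zygmund gives
\[
\mathbb{P}_0\bigl(|\mathfrak{T}|\ge cR^3(\log R)^{-1}\bigr)\gtrsim R^{-1}(\log R)^{-2}.
\]
Reparametrizing $n=cR^3(\log R)^{-1}$ and absorbing the logarithms into $n^{\varepsilon}$ gives the lower bound.

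\emph{Main obstacle.} The delicate point is the annulus-crossing step in the two-point estimate: showing that, under the conditioning to hit $0$, the loop erasure (and not merely the walk) still contains a macroscopic crossing near $y$ with probability bounded below. We would handle it as in Lemma~\ref{lem:two-point-decompose}, working with the time reversal from $0$, and require that after passing near $y$ the walk never returns to $B(y,R/50)$. Loops created later then cannot erase the crossing, so it survives in the loop erasure.
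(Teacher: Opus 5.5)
Your plan is the one the paper intends; the paper itself only says to rerun the proofs of Theorems~\ref{thm:intrinsic} and \ref{thm:volume} with Lemmas~\ref{lem:ext-int-0}--\ref{lem:int-vol-0}. Your upper bound chains Proposition~\ref{prop:extrinsic-WUSF}, Lemma~\ref{lem:ext-int-0} and Lemma~\ref{lem:int-vol-0} directly instead of passing through Proposition~\ref{prop:intrinsic-WUSF}, and it is correct with your exponents. The conditional second-moment lower bound with a two-point estimate that needs no ILERW $\gamma$ is the right adaptation. Restricting to $\|y\|\in[R/8,R/4]$ is a sensible way to keep the logarithmic loss uniform.

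The step that fails as you describe it is the survival of the crossing under loop erasure. Forbidding $S'$ from re-entering $B(y,R/50)$ after it passes near $y$ does not protect the crossing. A later loop erases every point of the current loop-erased path after the point where it closes. So if $S'$ returns to any point of the loop-erased path laid down \emph{before} it reached $y$ (e.g.\ near $0$), the whole piece through $B(y,R/50)$ is erased, and avoiding $B(y,R/50)$ does not rule this out. Nor is it guaranteed that the loop erasure contains a crossing at the moment $S'$ leaves $B(y,R/10)$.

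The confinement you need is that of $H^{1}$, $H^{3}$, $H^{4}$ in Lemma~\ref{lem:two-point-decompose}:
\begin{enumerate}
\item $S'$ first leaves $B_{\|y\|}$ inside $B(y,R/100)$;
\item it then moves outward in a thin tube;
\item it never afterwards re-enters $B_{\|y\|}$ outside $B(y,R/50)$, and likewise at the next scale.
\end{enumerate}
The conclusion should then come from the trace, not from the loop dynamics. The loop erasure is a self-avoiding path between $0$ and $x$ inside the trace. The confinement forces every such path through the tube at $y$, hence across from $\partial B(y,R/50)$ to $\partial B(y,R/10)$.

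In the $0$-tree case this is even simpler and needs neither time reversal nor LERW reversibility. Use Harnack for the $h$-transform with $h(z)=P_{z}(\tau_{0}<\infty)\asymp\|z\|^{-1}$. With conditional probability bounded below, the entire trace of the walk from $x$ to $0$ lies in a thin tube around a fixed simple curve from $x$ through $y$ to $\partial B_{\delta R}$, together with $B_{2\delta R}$. Every lattice path from $x$ to $0$ in that set passes through the tube's cross-section at $y$. In particular $\operatorname{LE}(S[0,\tau_{0}])$ does, so it contains the required crossing. With this replacement your argument goes through.
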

The proofs of Propositions~\ref{prop:intrinsic-WUSF} and \ref{prop:volume-WUSF} are similar to those of Theorems~\ref{thm:intrinsic} and \ref{thm:volume} in the previous section. One only needs to replace Lemmas~\ref{lem:ext-int}--\ref{lem:int-vol} by their $0$-WUSF analogues, namely Lemmas~\ref{lem:ext-int-0}--\ref{lem:int-vol-0} from Section~\ref{subsec:ust-usf}. We remark that, as in the case of Theorems~\ref{thm:intrinsic} and \ref{thm:volume}, the $-
\varepsilon$ term can also be removed with some extra effort.
\section{Application to Abelian sandpile model}\label{sec:sandpile}
In this section, inspired by a similar argument from \cite{Hut20}, we derive the sandpile exponents from those of $0$-tree in the $0$-WUSF, obtained in Section \ref{sec:0wusf}.
\begin{proof}[Proof of Theorem~\ref{thm:sandpile-radius}.]
By the infinite-volume first-wave/$0$-tree correspondence, specifically \eqref{eq:first-wave-radius-tree}, and since $W_{1}(H)\subseteq \mathrm{AvC}$, Proposition~\ref{prop:extrinsic-WUSF} yields, for $R\ge1$,
\begin{align*}
\mathbf{P}\bigl(\diam_{\mathrm{ext}}(\mathrm{AvC})\geq R\bigr)
&\geq \mathbf{P}\bigl(\diam_{\mathrm{ext}}(W_{1}(H))\geq R\bigr)\\
&=G(0,0)\,\mathbb{P}_{0}\bigl(\diam_{\mathrm{ext}}(\mathfrak{T})\geq R\bigr)
\gtrsim R^{-1}.
\end{align*}
For the upper bound, \cite[Lemma 2.6]{BHJ17} (see also \cite[(9.3)]{Hut20}) gives
\[
\mathbf{P}\bigl(\diam_{\mathrm{ext}}(\mathrm{AvC})\geq R\bigr)\leq G(0,0)\,\mathbb{P}_{0}\bigl(\diam_{\mathrm{ext}}(\mathfrak{T})\geq R\bigr).
\]
Combining this with Proposition~\ref{prop:extrinsic-WUSF} completes the proof.
\end{proof}
\begin{proof}[Proof of Theorem~\ref{thm:sandpile-size}]

In a single wave, each vertex topples at most once. Therefore both $\mathrm{Av}$ and $|\mathrm{AvC}|$ dominate $|W_{1}(H)|$. By \eqref{eq:first-wave-volume-tree} and Proposition~\ref{prop:volume-WUSF}, for $n\ge2$,
\[
\mathbf{P}\bigl(|\mathrm{AvC}|\ge n\bigr)
\ge \mathbf{P}\bigl(|W_{1}(H)|\ge n\bigr)
=G(0,0)\,\mathbb{P}_{0}\bigl(|\mathfrak{T}|\ge n\bigr)
\gtrsim n^{-1/3-\varepsilon}.
\]
The case $n=1$ is absorbed by changing the constants.

We now prove the upper bound, which is achieved in a similar manner as in \cite[Section~9]{Hut20}. For every $m\geq1$,
\[
\mathbf{P}\bigl(\mathrm{Av}\geq n\bigr)
\leq \mathbf{P}\bigl(\diam_{\mathrm{ext}}(\mathrm{AvC})\geq m\bigr)
+\frac{1}{n}\,\mathbf{E}\Bigl[\sum_{x\in B_m}N(0,x)\Bigr].
\]
By \eqref{eq:Dhar-inf}, the expectation on the right-hand side is equal to $\sum_{x\in B_m}G(0,x)\lesssim m^{2}$ in dimension three. Therefore, by Theorem~\ref{thm:sandpile-radius},
\[
\mathbf{P}\bigl(\mathrm{Av}\geq n\bigr)\lesssim m^{-1+\varepsilon}+\frac{m^{2}}{n}.
\]
Choosing $m=\lfloor n^{1/3}\rfloor$ proves the claimed upper bound.
\end{proof}

\begin{rmk}\label{rem:2D}
Our method does not yield meaningful two-dimensional sandpile bounds. The first obstruction is conceptual: the three-dimensional argument is built on the transient $0$-WUSF picture, whereas in $d=2$ one must instead work with the recurrent $0$-UST. More seriously, the infinite-volume two-dimensional avalanche is substantially subtler: unlike in $d\geq 3$, one cannot simply appeal to almost-sure finiteness of avalanches and transfer tree estimates wave by wave in the same manner. In fact, it is not even known whether avalanches are almost surely finite in two dimensions. This is why the existing two-dimensional results rely on a different last-wave analysis, and why Table~\ref{tab:0wusf-sandpile-exponents} records only bounds on the exponents in that case.
\end{rmk}

\end{document}